\newcommand{\fr}{^{\frown}}
\newcommand{\pz}{\Pi^0_1}
\newcommand{\dt}{\Delta^0_2}
\newcommand{\ov}{\overrightarrow}
\newcommand{\Z}{\mathbb{Z}}
\newcommand{\A}{{\mathcal A}}
\newcommand{\B}{{\mathcal B}}
\newcommand{\G}{{\mathcal G}} 
\newcommand{\F}{{\mathcal F}} 
\newcommand{\overbar}{\overline}
\newcommand{\N}{\mathbb N}
\newcommand{\restr}{\upharpoonright}  
\newcommand{\K}{{\mathcal K}} 
\newcommand{\HH}{{\mathcal H}}
\newcommand{\Or}{{\mathcal O}}
\newcommand{\Q}{{\mathbb Q}}
\newcommand{\la}{\langle}
\newcommand{\ra}{\rangle}
\newtheorem{theorem}{Theorem}[section]
\newtheorem{defn}[theorem]{Definition}
\newtheorem{lemma}[theorem]{Lemma}
\newtheorem{corollary}[theorem]{Corollary}
\newtheorem{cor}[theorem]{Corollary}
\newtheorem{example}{Example}
\newtheorem{proposition}[theorem]{Proposition}
\newtheorem{prop}[theorem]{Proposition}
\begin{document}

\pagestyle{headings}

\title{Computability and Categoricity of Weakly Ultrahomogeneous Structures}

\author{Francis Adams and Douglas Cenzer \\
Department of Mathematics, University of Florida \\
P.O. Box 118105,  Gainesville, Florida 32611 \\
fsadams@ufl.edu,  cenzer@math.ufl.edu}

%

\date{}
\maketitle

\begin{abstract}
This paper investigates the effective categoricity of ultrahomogeneous
structures. It is shown that any computable ultrahomogeneous structure
is $\Delta^0_2$ categorical. A structure $\A$ is said to be \emph{weakly
  ultrahomogeneous} if there is a finite (\emph{exceptional}) set of
elements $a_1,\dots,a_n$ such that $\A$ becomes ultrahomogeneous when
constants representing these elements are added to the
language. Characterizations are obtained for  weakly
ultrahomogeneous linear orderings, equivalence structures,
injection structures and trees, and these are compared with characterizations of the
computably categorical and $\Delta^0_2$ categorical structures. Index sets are used to
determine the complexity of the notions of ultrahomegenous and weakly ultrahomogeneous 
for various families of structures. 

\end{abstract}



\section{Introduction}

Computable model theory studies the algorithmic properties of effective
mathematical structures and the relationships among such structures.
The effective categoricity of a computable structure $\A$ measures the
possible complexity of isomorphisms between $\A$ and computable copies
of $\A$, and is an important gauge of the complexity of $\A$. 

This paper will only be concerned with countable structures, countably infinite unless otherwise stated. We say a countable structure (model) $\A$ is computable if its
universe $A$ is computable and all of its functions and relations are
uniformly computable.  Given two computable structures, we will say
they are computably isomorphic if there exists an isomorphism between
them that is computable.  For a single computable structure
$\mathcal{A}$, we will say $\mathcal{A}$ is computably categorical if
every computable structure isomorphic to $\mathcal{A}$ is in fact
computably isomorphic to $\mathcal{A}$.  More generally, we will say
two computable structures are $\Delta^0_{\alpha}$ isomorphic if there
exists an isomorphism between them that is $\Delta^0_{\alpha}$ and we
will say a computable structure $\mathcal{A}$ is $\Delta^0_{\alpha}$
categorical if every computable structure isomorphic to $\mathcal{A}$
is $\Delta^0_{\alpha}$ isomorphic to $\mathcal{A}$ and 
will say that a computable structure $\mathcal{A}$ is \emph{relatively} $\Delta^0_{\alpha}$
categorical if, for  every structure $\B$  isomorphic to $\mathcal{A}$, there 
is an isomorphism which is $\Delta^0_{\alpha}$ relative to the diagram of $\B$.  
 More generally, an arbitrary structure $\A$  is relatively $\Delta^0_{\alpha}$ categorical 
if for any structure $\B$ isomorphic to $\A$ there is an isomorphism which is $\Delta^0_{\alpha}$ relative to the diagrams of $\A$ and $\B$.  
For computable structures, the last two notions agree. 

A structure $\A$ is said to be \emph{ultrahomogeneous} if any isomorphism
between finitely generated substructures extends to an automorphism
of $\A$. Ultrahomogeneous structures were first studied
by Fra\"{i}ss\'{e} \cite{Fr86}, who defined the \emph{age} of a structure
to be the family of finitely generated substructures of $\A$ and
gave properties which characterized the age of an ultrahomogeneous
structure.  We define here a new notion of \emph{weakly homogeneous} structures,
where $\A$ is weakly ultrahomogeneous if there is a
finite (\emph{exceptional}) set  of elements $a_1,\dots,a_n$ of $\A$ such that
any isomorphism between finitely generated substructures  
$\A$ becomes ultrahomogeneous when constants representing these
elements are added to the language.  Csima, Harizanov, R. Miller and A. Montalban \cite{CHM11} studied
computable ages and the computability of the canonical
ultrahomogeneous structures, called \emph{Fra\"{i}ss\'{e} limits}.

The notion of a \emph{computably homogeneous} structure is defined in \cite{CHM11}, as follows.
Given a structure $\A$ and a tuple $\ov{a} = (a_1,\dots,a_n)$ from $\A$, let $\A_{\ov{a}}$ denote the substructure
of $\A$ generated by $\ov{a}$. $\A$ is said to be \emph{computably homogeneous} if there exists
a computable process which, given a tuple $\ov{a}$, a map $g: \ov{a} \to \A$, and $x \in \A$, returns a tuple
$\ov{b}$ and $y \in \A$ such that $\A_{\ov{a}} \subseteq \A_{\ov{b}}$ and, if $g$ extends to an 
embedding of $\A_{\ov{a}}$ into $\A$, then the function $h$, defined so that $h(x) = y$ and $h(g(a_i)) = a_i$ 
for each $i$, extends to an embedding of $\A_{x,g(\ov{a})}$ into $\A_{\ov{b}}$. We will say
that $\A$ is \emph{weakly computably homogeneous} if there is a finite tuple of elements $\ov{a}$ of $\A$ such
that $(\A,\ov{a})$ is computably homogeneous.  It is shown in \cite{CHM11} that if a computable structure
$\A$ is computably homogeneous, then every isomorphism between finitely generated substructures
extends to a computable automorphism of $\A$. If $\A$ is locally finite and ultrahomogeneous, then
$\A$ is in fact computably homogeneous.   

Here are some simple examples of countable ultrahomogeneous
structures.  See \cite{M02} for more details.  The linear ordering
$(\mathbb{Q},<)$ of the rationals is the unique ultrahomogeneous
countable linear ordering. The age here is just the set of all finite
linear orderings. This structure is computably categorical.  An
equivalence structure $(A,E)$ is ultrahomogeneous if and only if all
equivalence classes have the same size $k$, $1 \leq k \leq \aleph_0$;
then the age is the set of all finite equivalence structures with all
classes of size $\leq k$. These structures are computably categorical.

An injection structure is a set with a single 1-1 unary function.
This function induces a partition of the set into distinct orbits:
finite cycles, one-way infinite orbits ($\omega$-orbits), or two-way
infinite orbits ($\mathbb{Z}$-orbits).  An injection structure is
ultrahomogeneous if and only if it has no $\omega$-orbits.  For
example, there is the injection structure with infinitely many
${\mathbb Z}$-orbits, where the age is the set of structures
consisting of finitely many ${\mathbb Z}$-orbits. There is also the
injection structure with exactly one orbit of size $k$ for each finite
$k$, where the age is the family of finite injection structures with
no more than one orbit of any size $k$. The injection structure with infinitely many $\omega$-orbits is in fact not computably categorical, but is $\Delta^0_2$ categorical.

We observe that in the first two examples there are computable models
of the countable ultrahomogeneous structure.  In the third example,
one can have an arbitrary number of orbits of various finite sizes and
thus a structure which is not computable.

In this paper, we will closely examine the effective categoricity of 
ultrahomogeneous structures. In section 2, we will prove that any
computable ultrahomogeneous structure is relatively $\Delta^0_2$ categorical. 
We also introduce the notion of \emph{weakly ultrahomogeneous
  structures}, where $\A$ is weakly ultrahomogeneous if there is a
finite (\emph{exceptional}) set of elements $a_1,\dots,a_n$ such that
$\A$ becomes ultrahomogeneous when constants representing these
elements are added to the language. We show that any 
computable weakly ultrahomogeneous structure is relatively $\Delta^0_2$ categorical.

In section 3, we characterize the weakly ultrahomogeneous computable linear orders 
as those which have finitely many successivities, which is equivalent to being 
computably categorical. We also show that \emph{any} countable weakly ultrahomogeneous 
linear order has a computable copy. The notion of a \emph{minimal} exceptional set is introduced, 
and we characterize the minimal exceptional sets for weakly ultrahomogeneous linear orders.

In section 4, we characterize the countable weakly ultrahomogeneous equivalence structures as those in which all but finitely many equivalence classes have the same size. Here again every such structure has a computable copy, and 
a computable equivalence structure is weakly ultrahomogeneous if and only if it is computably categorical. 
The minimal exceptional sets for weakly ultrahomogeneous structures contain exactly one element from each of the exceptional sized equivalence classes.

In section 5, we characterize the weakly ultrahomogeneous injection structures as those having only finitely many $\omega$-orbits.  
The minimal exceptional sets contain exactly one member from each $\omega$-orbit. For injection structures, computable categoricity 
implies weak ultrahomogeneity, which implies $\Delta^0_2$ categoricity, but neither implication can be reversed.

In section 6, we consider weakly ultrahomogeneous graphs. It is shown that any countable weakly ultrahomogeneous graph is a disjoint union of
graphs $\HH$ and $\K$, where $\HH$ has finitely many components, and $\K$ consists of a finite or infinite number of complete graphs $K_n$ for a fixed $n \leq \aleph_0$. 
If every vertex of $\G$ has finite degree, then $\G$ is weakly ultrahomogeneous if and only if there is some fixed $n$ such that all but finitely many components of $\G$ are $K_n$. 
Since graphs are relational structures, every weakly homogeneous graph is computably categorical. However, there are computably categorical graphs which are not weakly ultrahomogeneous.

In section 7, we consider weakly homogeneous trees, among the family of countable trees of height $\leq \omega$. There are many formulations for the study of trees. A tree may be defined by a partial ordering,  a binary infimum function, a predecessor function, or a fixed number of successor functions.  
R. Miller \cite{Mil05} showed that no computable tree of infinite height can be computably categorical as a partial ordering (or in the infimum
framework).  Lempp, McCoy, Miller and Solomon \cite{LMMS} characterized the computably categorical trees of finite height, in the partial ordering 
setting.  Calvert, Knight and J. Millar \cite{CKM} defined a notion of \emph{rank homogeneity} for trees in the predecessor formulation, where trees of infinite height can be computably categorical.  In the partial ordering formulation, a tree $(T,\prec)$  is ultrahomogeneous if and only if it has rank $\leq 1$  (or equivalently height $\leq 1)$. $(T,\prec)$ is weakly ultrahomogeneous if and only if the set of elements of rank $\geq 1$ is finite. We give a characterization of the exceptional sets for a weakly ultrahomogeneous tree $(T,\prec)$.  A tree $(T,f)$ equipped with a predecessor function $f$ is ultrahomogeneous if and only if any two elements of the same height have an equal number of successors. We characterize the weakly ultrahomogeneous trees $(T,f)$ in general and illustrate this characterization for trees of height $\leq 3$.  

 In section 8, we consider $n$-equivalence structures, where a set comes with several equivalence relations, and in particular nested equivalence structures, where those relations are ordered by inclusion.  We show that if such a structure is ultrahomogeneous, then each individual equivalence relation is ultrahomogeneous. The converse does not hold in general, but we prove it for nested equivalence structures with all finite equivalence classes.  Nested $n$- equivalence structures turn out to be closely related to trees of height $n$, as studied by Leah Marshall \cite{Marshall}.  Given nested equivalence relations $E_1 \supset E_2 \dots \supset E_n$ on a set $A$, let $E_0 = A \times A$,  let $E_{n+1}$ be equality, and define the tree $T_{\A}$ to be the set of equivalence classes of $A$ under each $E_i$, ordered by reverse inclusion, so that $A$ is the root of the tree. Marshall shows that $\A$ is computably categorical if and only if $T_{\A}$ is computably categorical as a partial order.  We show that $\A$ is ultrahomogeneous if and only if $T_{\A}$ is ultrahomogeneous under predecessor and similarly for weak ultrahomogeneity. 

\emph{Index sets}  are an important tool for finding the complexity of a notion.
We define index sets for many of the structures studied here and determine the complexity
of the index sets corresponding to ultrahomogeneous and to weakly ultrahomogeneous structures. 
For these results, we use the standard enumeration from Soare \cite{Soare87} of the partial computable functions as $\{\varphi_e: e \in \omega\}$
and we let the $e^{th}$ computably enumerable set  $W_e$ equal the domain of $\phi_e$.  The set $W_e$ is defined as the union of stages $W_{e,s}$ where we assume that, for any $s$, there is at most one $e$ and one $n$ such that $n \in W_{e,s+1} - W_{e,s}$.  A set $B$ of natural numbers is said to be \emph{$\Pi^0_2$ complete}
if $B$ is itself a $\Pi^0_2$ set and if, for any $\Pi^0_2$ set $A$, there is a computable function $f$ such that $i \in A \iff f(i) \in B$ for all $i$; similar definitions apply for other definability classes, such as $\Sigma^0_3$.  Some well-known index sets are the $\Pi^0_2$ complete set $INF = \{e: W_e \ \text{is infinite}\}$ and the $\Sigma^0_3$ complete set 
$COF  = \{e: W_e \ \text{is cofinite}\}$. We show that the index set associated with weakly ultrahomogeneous structures of several types are $\Sigma^0_3$ complete; this includes linear orders, equivalence structures, and trees under predecessor. 

A preliminary version of this paper appeared in \cite{AC14}.  New material in the present paper includes the following.   In section 2, we extend the main theorem for weakly ultrahomogeneous structures which do not have computable copies.  In section 6, we examine ultrahomogeneous injection structures without computable copies.  There are two sections of new material,  section 6 on weakly ultrahomogeneous graphs and section 7 on weakly ultrahomogeneous trees. Section 8 includes new results on weakly ultrahomogeneous $n$-equivalence structures. The results on index sets for (weakly)  ultrahomogeneous structures are also new to this paper. 

The authors would like to thank the referee for very helpful comments.


\section{Categoricity of Ultrahomogeneous Structures}

In this section, we will show that any computable ultrahomogeneous
structure is $\Delta^0_2$ categorical.
Some lemmas are needed.  If $\mathcal{A}$ is an
ultrahomogeneous structure and $\mathcal{A}\cong\mathcal{B}$, by
composing maps it is easy to see that $\mathcal{B}$ is also ultrahomogeneous.
We also have the following stronger fact:

\begin{lemma}\label{iso} Let $\mathcal{A},\mathcal{B}$ be isomorphic
  ultrahomogeneous structures.  Let $X, Y$ be finitely generated
  substructures of $\mathcal{A},\mathcal{B}$ respectively.  If
  $\varphi:X\to Y$ is an isomorphism, then there is an isomorphism of
  $\mathcal{A}$ and $\mathcal{B}$ extending $\varphi$. \end{lemma}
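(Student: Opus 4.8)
The natural approach is a back-and-forth construction. We are given an isomorphism $\varphi: X \to Y$ between finitely generated substructures, and we want to build an isomorphism $\Phi: \mathcal{A} \to \mathcal{B}$ extending it. The plan is to enumerate the universes $A = \{a_0, a_1, a_2, \dots\}$ and $B = \{b_0, b_1, b_2, \dots\}$ and construct $\Phi$ in stages, maintaining at each stage a partial isomorphism $\varphi_s$ between a finitely generated substructure of $\mathcal{A}$ and a finitely generated substructure of $\mathcal{B}$, with $\varphi_0 = \varphi$. At even stages I would ensure $a_s$ enters the domain (the ``forth'' step) and at odd stages I would ensure $b_s$ enters the range (the ``back'' step), so that in the limit $\Phi = \bigcup_s \varphi_s$ is a total bijection and an isomorphism.

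The engine for each step is ultrahomogeneity, and here I would want to use Lemma~\ref{iso}'s hypotheses carefully. For the forth step: suppose $\varphi_s: X_s \to Y_s$ is an isomorphism of finitely generated substructures, and I wish to add $a_s$ to the domain. The issue is that $\varphi_s$ is an isomorphism \emph{between} $\mathcal{A}$ and $\mathcal{B}$, not within a single structure, so ultrahomogeneity of $\mathcal{A}$ does not apply directly. First I would fix an isomorphism $\psi: \mathcal{A} \to \mathcal{B}$, which exists by hypothesis. Then $\psi^{-1} \circ \varphi_s$ is an isomorphism between the finitely generated substructures $X_s$ and $\psi^{-1}(Y_s)$ of $\mathcal{A}$; by ultrahomogeneity of $\mathcal{A}$ it extends to an automorphism $\alpha$ of $\mathcal{A}$. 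Then $\psi \circ \alpha$ is an isomorphism $\mathcal{A} \to \mathcal{B}$ extending $\varphi_s$, so in particular $\varphi_s \cup \{(a_s, \psi(\alpha(a_s)))\}$ is a partial isomorphism, and the substructure generated by $X_s \cup \{a_s\}$ maps isomorphically onto its image. This gives the extension $\varphi_{s+1}$ with $a_s$ in its domain. The back step is symmetric, using ultrahomogeneity of $\mathcal{B}$ (noting $\mathcal{B}$ is ultrahomogeneous since $\mathcal{A}$ is and $\mathcal{A} \cong \mathcal{B}$, as remarked before the lemma).

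I would then verify that $\Phi = \bigcup_s \varphi_s$ is well-defined (the $\varphi_s$ are nested), total on $A$ (every $a_s$ entered the domain), surjective onto $B$ (every $b_s$ entered the range), injective (each $\varphi_s$ is), and structure-preserving (each relation/function agreement is witnessed at some finite stage, and the $\varphi_s$ are isomorphisms on generated substructures).

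\textbf{Main obstacle.} The key subtlety is the one flagged above: ultrahomogeneity is an \emph{internal} property of a single structure, so the back-and-forth steps cannot invoke it directly on the cross-structure map $\varphi_s$. The trick of conjugating by a fixed ambient isomorphism $\psi$ to convert a partial $\mathcal{A} \to \mathcal{B}$ map into an internal $\mathcal{A} \to \mathcal{A}$ map is what makes each extension step go through, and getting this bookkeeping right—ensuring the extended map really is the restriction of a full isomorphism at every stage—is the heart of the argument. The remaining verifications are routine.
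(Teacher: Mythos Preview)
Your argument is correct, but you have worked much harder than necessary. Look again at your forth step: you fix an isomorphism $\psi:\mathcal{A}\to\mathcal{B}$, observe that $\psi^{-1}\circ\varphi_s$ is an isomorphism between finitely generated substructures of $\mathcal{A}$, extend it by ultrahomogeneity to an automorphism $\alpha$ of $\mathcal{A}$, and conclude that $\psi\circ\alpha:\mathcal{A}\to\mathcal{B}$ is an isomorphism extending $\varphi_s$. But at stage $s=0$ this already produces a full isomorphism $\mathcal{A}\to\mathcal{B}$ extending $\varphi_0=\varphi$, which is exactly what the lemma asks for. The back-and-forth machinery is never needed.

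This one-line version---take $\theta:\mathcal{A}\to\mathcal{B}$, extend $\theta^{-1}\circ\varphi$ to an automorphism $\alpha$ of $\mathcal{A}$, and output $\theta\circ\alpha$---is precisely the paper's proof. So your key idea (the conjugation trick) matches the paper exactly; you simply did not notice that it finishes the job in one step. The back-and-forth you set up is the kind of argument one would use for the \emph{next} result in the paper (Theorem~\ref{main}), where the point is to control the complexity of the isomorphism; for the bare existence statement of this lemma it is overkill.
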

\begin{proof} Let $\theta:\mathcal{A}\to\mathcal{B}$ be an
  isomorphism.  Then $\theta^{-1}\circ\varphi:X\to\mathcal{A}$ is an
  isomorphism of finitely generated substructures of $\mathcal{A}$.
  So it extends to an automorphism $\alpha:\mathcal{A}\to\mathcal{A}$.
  Then $\theta\circ\alpha:\mathcal{A}\to\mathcal{B}$ is an isomorphism
  that extends $\varphi$.
\end{proof}

Let $\mathcal{A}_s[x_1,\ldots,x_n]$ be the terms of height $s$
starting with the $x_i$, i.e. the set obtained by starting with the
elements of $\{x_1,\ldots,x_n\}$ and applying the functions of the
structure up to $s$-many times.  Then
$\la\vec{x}\ra=\bigcup_{s\in\omega}\mathcal{A}_s[\vec{x}]$.  While the
$\mathcal{A}_s[\vec{x}]$ aren't structures, we will say that
$\mathcal{A}_s[\vec{x}]\cong\mathcal{A}_s[\vec{y}]$ if for any terms
$t_1,\ldots t_m$ of height $s$ and any relation $R$, we have
$R(t_1[\vec{x}],\ldots,t_m[\vec{x}]) \Leftrightarrow
R(t_1[\vec{y}],\ldots,t_m[\vec{y}])$.

\begin{lemma} \label{lem2} $\la\vec{x}\ra\cong\la\vec{y}\ra$ with $x_i\to y_i$ iff
  for all $s\in\omega$ we have
  $\mathcal{A}_s[\vec{x}]\cong\mathcal{A}_s[\vec{y}]$. Thus,
  determining if two finitely generated substructures are isomorphic is
  $\pz$.\end{lemma}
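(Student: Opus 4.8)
The plan is to prove the stated biconditional in both directions and then read off the complexity bound from the right-hand side. Throughout I would treat equality as one of the relations $R$ available in the structure (as is standard for first-order logic with equality), since this is exactly what makes the term-map used below well defined; if one prefers not to, the same equalities must be handled as a separate explicit case.

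For the forward direction, suppose $\varphi:\la\vec{x}\ra \to \la\vec{y}\ra$ is an isomorphism with $\varphi(x_i)=y_i$. Since an isomorphism commutes with the function symbols, an easy induction on the height of a term $t$ gives $\varphi(t[\vec{x}]) = t[\vec{y}]$. Then for any relation $R$ and any terms $t_1,\dots,t_m$ of height $\leq s$, preservation of $R$ under $\varphi$ yields $R(t_1[\vec{x}],\dots,t_m[\vec{x}]) \Leftrightarrow R(t_1[\vec{y}],\dots,t_m[\vec{y}])$, which is precisely the condition $\mathcal{A}_s[\vec{x}] \cong \mathcal{A}_s[\vec{y}]$, for every $s$.

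For the backward direction, assume $\mathcal{A}_s[\vec{x}]\cong \mathcal{A}_s[\vec{y}]$ for all $s$ and define $\varphi(t[\vec{x}]) = t[\vec{y}]$. Surjectivity onto $\la\vec{y}\ra$ is immediate, and commuting with the functions is built into the definition, since $f(t_1,\dots,t_k)$ is again a term. Preservation of each relation $R$ follows by choosing $s$ large enough to bound the heights of all the relevant terms and invoking $\mathcal{A}_s[\vec{x}]\cong\mathcal{A}_s[\vec{y}]$. The step I expect to be the main obstacle is well-definedness and injectivity of $\varphi$: if $t[\vec{x}] = t'[\vec{x}]$, I would pick $s$ at least the heights of $t$ and $t'$ and apply the isomorphism condition to the equality relation to conclude $t[\vec{y}]=t'[\vec{y}]$, and symmetrically for injectivity. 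This is the one place where term \emph{equality}, rather than mere relation preservation on distinct elements, must be controlled; everything else is bookkeeping.

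Finally, for the complexity claim, I would observe that for each fixed $s$ there are only finitely many terms of height $\leq s$ in the variables $\vec{x}$, the language being finite, so in a computable structure the predicate $\mathcal{A}_s[\vec{x}]\cong\mathcal{A}_s[\vec{y}]$ is decidable uniformly in $s,\vec{x},\vec{y}$: one computes the finitely many term-values and checks the finitely many relation equivalences. The characterization then expresses $\la\vec{x}\ra\cong\la\vec{y}\ra$ as $\forall s\,[\mathcal{A}_s[\vec{x}]\cong\mathcal{A}_s[\vec{y}]]$, a universal quantifier over a computable matrix, and hence as a $\pz$ predicate.
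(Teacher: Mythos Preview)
Your proposal is correct and follows essentially the same approach as the paper: both directions hinge on the map $t[\vec{x}]\mapsto t[\vec{y}]$, and the $\Pi^0_1$ bound comes from expressing the isomorphism condition as a universal quantifier over the height $s$. Your version is in fact more careful than the paper's on two points---you explicitly address well-definedness and injectivity via the equality relation, and you spell out why the matrix is decidable---whereas the paper simply asserts that the term-map is an isomorphism and leaves the complexity claim implicit.
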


\begin{proof}  For the left to right direction, it is clear that the
  restriction of the isomorphism to any height is an instance of the
  desired map.  The reverse implication follows from the fact that
  given finitely many terms $t_1[\vec{x}],\ldots t_m[\vec{x}]$, they
  occur by some finite height $s$. So the terms are in
  $\A_s[\vec{x}]$, hence for any relation $R$ we have
  $R(t_1[\vec{x}],\ldots,t_m[\vec{x}]) \Leftrightarrow
  R(t_1[\vec{y}],\ldots,t_m[\vec{y}])$ and so the map $t[\vec{x}]\to
  t[\vec{y}]$ is an isomorphism.
\end{proof}

\begin{theorem} \label{main} Every ultrahomogeneous structure is relatively
  $\Delta^0_2$ categorical. 
\end{theorem}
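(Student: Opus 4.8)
The plan is to construct an isomorphism $F : \mathcal{A} \to \mathcal{B}$ by a straightforward back-and-forth, the whole point being that ultrahomogeneity eliminates all danger of dead ends, so that no backtracking is ever required and the only noncomputable ingredient is answering the $\pz$ questions supplied by Lemma~\ref{lem2}. Working relative to the diagrams of $\mathcal{A}$ and $\mathcal{B}$, fix enumerations $a_0, a_1, \dots$ of $A$ and $b_0, b_1, \dots$ of $B$. Throughout I maintain a finite tuple $\vec{a}$ from $A$ and a tuple $\vec{b}$ from $B$ of the same length with $\la \vec{a} \ra \cong \la \vec{b} \ra$ via $a_i \mapsto b_i$, and I let $F$ act on the generated substructure by $t[\vec{a}] \mapsto t[\vec{b}]$ for every term $t$.

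At a forth stage I take the next element $a$ of $A$ in the enumeration and search for the least $b \in B$ with $\la \vec{a}, a \ra \cong \la \vec{b}, b \ra$ via the extended correspondence $a_i \mapsto b_i,\ a \mapsto b$ (if $a$ is already generated by $\vec{a}$ its image is already forced, and I simply record it); back stages are symmetric, taking the next element of $B$ and finding a preimage. Such a witness $b$ always exists: by Lemma~\ref{iso} the isomorphism $a_i \mapsto b_i$ of finitely generated substructures extends to a global isomorphism $\theta : \mathcal{A} \to \mathcal{B}$, and then $b = \theta(a)$ works, since $\theta$ carries $\la \vec{a}, a \ra$ isomorphically onto $\la \vec{b}, b \ra$. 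This is precisely the no-dead-ends phenomenon: because \emph{every} partial isomorphism between finitely generated substructures extends to a global one, any choice of witness that keeps the finite diagrams matched can be continued, so greedily taking the least witness never traps the construction.

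For the complexity, observe that by Lemma~\ref{lem2} the relation $\la \vec{a}, a \ra \cong \la \vec{b}, b \ra$ (via the specified correspondence) is $\pz$ in the diagrams, since it is equivalent to $\A_s[\vec{a}, a] \cong \A_s[\vec{b}, b]$ for all $s$. Hence, relative to the join of the two diagrams, the halting problem decides each instance, and finding the least witness $b$ at a given stage is a $\Delta^0_2$ operation. Since determining $F(a_n)$ requires only finitely many stages, each executed by such a search, $F$ is computable from $(D_{\mathcal{A}} \oplus D_{\mathcal{B}})'$ and is therefore $\Delta^0_2$ relative to the diagrams. The back-and-forth bookkeeping makes $F$ total and surjective, and since each stage preserves an isomorphism of generated substructures, $F$ is a well-defined isomorphism by the local criterion of Lemma~\ref{lem2}.

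I expect the main work to be bookkeeping rather than a genuine obstacle. One must verify that the generated-substructure maps at successive stages cohere into a single well-defined bijection, with injectivity and surjectivity coming from the back-and-forth scheduling and well-definedness from maintaining the isomorphism of generated substructures at every stage. The one point that genuinely needs care in the complexity analysis is that the witness search is $\Delta^0_2$ and not merely $\Sigma^0_2$: this is why I take the \emph{least} witness to a $\pz$ condition, turning the construction into a deterministic process computable in the jump rather than a mere existential search. The conceptual heart of the argument remains the no-dead-ends fact furnished by Lemma~\ref{iso}.
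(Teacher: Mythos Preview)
Your proposal is correct and follows essentially the same approach as the paper: a back-and-forth construction using Lemma~\ref{iso} to guarantee witnesses exist at every stage and Lemma~\ref{lem2} to bound the complexity of the witness search at $\pz$ (hence $\Delta^0_2$) relative to the diagrams. Your added remarks about taking the \emph{least} witness to keep the process deterministic in the jump, and about handling elements already generated by the current tuple, are sound refinements of bookkeeping that the paper leaves implicit.
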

\begin{proof} Let $\mathcal{A}$ and  $\mathcal{B}$ be ultrahomogeneous structures and let $\varphi$ be an isomorphism from $\A$ to $\B$.
  We want to build an  isomorphism   $\theta$ which is $\Delta^0_2$ relative to the diagrams of $\A$ and $\B$.  We do this with a back-and-forth argument, building
  increasing partial isomorphisms $\theta_n$ at each stage and letting
  $\theta=\bigcup\theta_n$.  Let $a_0\in A$.  Since $\la
  a_0\ra\cong\la \varphi(a_0)\ra$, set
  $\theta_0(a_0)=\varphi(a_0)=b_0$.

\indent Suppose we have defined $\theta_{2n-1}$ for $\{a_0,\ldots
a_{2n-1}\}$ with $\theta_{2n-1}(a_i)=b_i$.  Choose the least
$a_{2n}\in A\setminus\{a_0,\ldots a_{2n-1}\}$.  There exists a $b\in
B$ such that $\la a_0,\ldots a_{2n}\ra\cong\la b_0,\ldots,b_{2n-1},
b\ra$ and we can choose the isomorphism so it extends $\theta_{2n-1}$
by Lemma \ref{iso}.  Now search for this $b$ using a $\pz$-oracle to check
whether $\la a_0,\ldots a_{2n}\ra\cong\la b_0,\ldots,b_{2n-1}, b\ra$, call
it $b_{2n}$, and define $\theta_{2n}(a_{2n})=b_{2n}$.

\indent If we have defined $\theta_{2n}$ for $\{a_0,\ldots
a_{2n}\}$ with $\theta_{2n}(a_i)=b_i$, we can similarly use a $\pz$-oracle to find an $a$ such that $\la a_0,\ldots
a_{2n}, a\ra\cong\la b_0,\ldots,b_{2n}\ra$. Call this $a_{2n+1}$, and
define $\theta_{2n+1}(a_{2n+1})=b_{2n+1}$.

\indent After constructing $\theta:\mathcal{A}\to\mathcal{B}$, we can
see that $\theta$ is a bijection since we took the least $a_i$ and
$b_i$ at each stage.  It is also clear that it is $\Delta^0_2$ relative to $\A$ and $\B$, since
$\theta$ is defined using an oracle which is $\pz$ in $\A$ and $\B$.  To show that it is an
isomorphism, fix an $m$-tuple $\vec{x}\subseteq \mathcal{A}$ such that $\theta(x_i)=y_i$
for each $x_i$ in $\vec{x}$, and let $\vec{y} =
\theta(\vec{x})$. Choose any relation $R$, and any function $f$ of
arity $m$.  Then for some $n$, $\vec{x}\subseteq\{a_0,\ldots, a_n\}$,
so $\vec{y}\subseteq\{b_0,\ldots,b_n\}$, and since $\la a_0,\ldots,
a_n\ra\cong\la b_0,\ldots,b_n\ra$, we have $R(\vec{x})\Leftrightarrow
R(\vec{y})$ and $\theta(f(\vec{x}))=f(\vec{y})$.
\end{proof}

The complexity of the isomorphism constructed in the theorem is a
direct result of the complexity of the problem of determining if two
finitely generated substructures are isomorphic.  So if this problem
is computable for a structure, then that structure is relatively computably
categorical.  If a structure is relational, then for any
finite subset $X$ of the universe we have $\la X\ra=X$ and checking if
two finite structures are isomorphic is computable. Therefore, all
relational ultrahomogeneous structures are relatively computably categorical. 

More generally, any ultrahomogeneous locally finite structure is relatively
computably categorical, where locally finite means that every finitely
generated substructure is finite. This is Proposition 4.1(3) of \cite{CHM11}. 
 The converse is false: an injection
structure consisting of a single $\Z$-orbit is computably categorical,
but is clearly not locally finite.

Next we introduce a weak version of ultrahomogeneity.
Whereas ultrahomogeneous structures have the property that `all points
look the same' in a very strong way, the weaker version will
allow finitely many elements to look different from the others.

\begin{defn} A structure $\mathcal{A}$ is weakly ultrahomogeneous if
  there exists a finite set $\{a_1,a_2,\ldots,a_n\}\subseteq A$ such
  that for all tuples $\vec{x},\vec{y}$ from $A$ with $\la
  \vec{a},\vec{x}\ra\cong\la\vec{a},\vec{y}\ra$ where each $a_i$ is
  fixed, this isomorphism of substructures extends to an automorphism
  of $\mathcal{A}$.  Call such a set $\{a_1,a_2,\ldots,a_n\}$ an
  exceptional set of $\mathcal{A}$.  
\end{defn}

Alternatively, $\A$ is weakly ultrahomogeneous if there is a finite
set $a_1,\dots,a_n$ of elements of $\A$ such that $(\A,a_1,\dots,a_n)$
is ultrahomogeneous in the extended language with constants for
$a_1,\dots,a_n$. Thus we can prove  the following.

\begin{theorem} \label{wmain} \begin{enumerate}
\item  Every weakly ultrahomogeneous structure is
relatively  $\Delta^0_2$ categorical. 
\item  Every locally finite weakly ultrahomogeneous structure is
relatively  computably categorical. In particular, every weakly ultrahomogeneous relational structure is
relatively computably categorical. 
\end{enumerate}
\end{theorem}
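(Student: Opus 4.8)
The plan is to reduce Theorem \ref{wmain} to the already-established results about genuinely ultrahomogeneous structures by passing to the expanded language. Given a weakly ultrahomogeneous structure $\A$ with exceptional set $\{a_1,\dots,a_n\}$, I would first observe, as the remark following the definition indicates, that $(\A,a_1,\dots,a_n)$---the structure $\A$ together with constant symbols naming the exceptional elements---is genuinely ultrahomogeneous in the extended language. This is essentially immediate from unwinding the definitions: an isomorphism between finitely generated substructures of $(\A,\vec{a})$ is exactly an isomorphism between substructures $\la\vec{a},\vec{x}\ra$ and $\la\vec{a},\vec{y}\ra$ of $\A$ that fixes each $a_i$, and the weak ultrahomogeneity hypothesis says precisely that such maps extend to automorphisms (which are the same objects as automorphisms of the expansion, since automorphisms must fix named constants).

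Once that observation is in place, I would apply Theorem \ref{main} to the expansion. For part (1), if $\A$ is a computable weakly ultrahomogeneous structure, then $(\A,a_1,\dots,a_n)$ is a computable ultrahomogeneous structure (adding finitely many constants to a computable structure keeps it computable, uniformly in the oracle), so by Theorem \ref{main} it is relatively $\Delta^0_2$ categorical. I then need to transfer categoricity of the expansion back to $\A$ itself. The key point is that the class of structures isomorphic to $\A$ and the class of structures isomorphic to $(\A,\vec{a})$ differ only by the choice of interpretations of finitely many constants: given any $\B\cong\A$ via some isomorphism $\psi$, the expansion $(\B,\psi(a_1),\dots,\psi(a_n))$ is isomorphic to $(\A,\vec{a})$, so relative $\Delta^0_2$ categoricity of the expansion yields a $\Delta^0_2$ isomorphism between $(\A,\vec{a})$ and $(\B,\psi(\vec{a}))$; this same map is an isomorphism of the reducts $\A$ and $\B$, and it is $\Delta^0_2$ relative to the diagrams of $\A$ and $\B$ because those diagrams compute the diagrams of the expansions (the constant interpretations are just finitely many extra bits of data). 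Part (2) is entirely analogous, invoking the locally-finite / relational case recorded after Theorem \ref{main}: adding finitely many constants preserves local finiteness and preserves relationality, so the expansion is relatively computably categorical, and the same reduct argument transfers this to $\A$.

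The one subtlety I would take care with is the \emph{relativized} form of categoricity and the correct handling of the finitely many parameters, which is where the real (if mild) work lies. Because I am claiming relative categoricity, I cannot simply assume the parameters $\psi(a_i)$ are known in advance---but this causes no trouble, since they are recoverable from the diagram of $\B$ together with the isomorphism type, and in any case a $\Delta^0_2$ (resp.\ computable) oracle relative to $\A$ and $\B$ can locate and fix any finite tuple of named elements. More precisely, I would note that the reduction does not require effectively finding the parameters; it suffices that for the \emph{given} isomorphism $\psi$ the expansion $(\B,\psi(\vec{a}))$ is a legitimate isomorphic copy of $(\A,\vec{a})$ whose diagram is computable from the diagram of $\B$ together with the finite information $\psi(\vec{a})$, and relativized categoricity of the expansion then supplies the desired isomorphism at the correct complexity. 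The main obstacle, then, is not any single hard estimate but rather bookkeeping: confirming that passing to and from the expanded language preserves both the isomorphism relation and the relevant effectivity (computability of the structure, local finiteness, relationality, and the reducibility of diagrams), so that the quoted categoricity theorems apply verbatim. I expect the entire argument to be short, with Theorem \ref{main} and its corollaries doing all the substantive lifting.
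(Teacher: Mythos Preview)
Your proposal is correct and follows exactly the approach the paper takes: the paper's entire argument is the one-line observation (stated just before the theorem) that $(\A,a_1,\dots,a_n)$ is ultrahomogeneous in the expanded language, after which Theorem \ref{main} and the remarks following it apply directly. Your added bookkeeping about transferring relative categoricity back along the reduct and handling the parameters $\psi(\vec{a})$ is accurate and, if anything, more careful than what the paper writes out.
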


It is also easy to see that any locally finite weakly ultrahomogeneous structure is weakly
computably homogeneous. 

If $\A$ is a finite structure, it is trivially weakly ultrahomogeneous
since the universe can be taken to be an exceptional set.  Given any exceptional set, we can add finitely many
elements to it and obtain another exceptional set.  But more
interesting are the minimal exceptional sets and the senses in which
such sets are unique.  To see some instances of this definition, we
will look at the weakly ultrahomogeneous analogues of the examples of
ultrahomogeneous structures considered in the introduction.

In the remaining sections, we will examine specific families of structures. The goals are
to characterize ultrahomogeneous, and weakly ultrahomogeneous structures, and also to compare and contrast
these notions with effective categoricity. 

\section{Linear Orders}

We start with a characterization of computably categorical linear
orders proved by Remmel in \cite{R81}.  For a linear ordering $(A,<)$ and elements $a,b \in A$, $b$ is said to be the successor of $a$,
and $a$ the predecessor of $b$ if $a < b$ and there is no element $x$ with $a < x < b$. When $b$ is the successor of $a$, the pair $(a,b)$ is said to be a \emph{successor pair}, and each of  $a$ and $b$ is said to be a \emph{successivity}.
An element $a \in A$ is said to be a left endpoint if $a \leq  x$ for all $x \in A$ and is said to be a right endpoint if $x \leq a$ for all $x \in A$. These endpoints are unique if they exist.

\begin{theorem}[Remmel] For countable linear orders $\A$, the following are equivalent
\begin{enumerate}
\item  $\A$ is  computably  categorical.
\item  $\A$ is  relatively computably
  categorical.
\item $\A$ has finitely many successivities.
\item $\mathcal{A}=L_0+\Q+L_1+\Q+\ldots+\Q+L_n$ where the $L_i$ are
  finite chains, $L_0,L_n$ 
are possibly empty and $|L_i|\geq2$ for $1\leq i\leq n-1$.
\end{enumerate} \end{theorem}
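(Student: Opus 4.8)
The plan is to establish the four equivalences through the cycle $(2)\Rightarrow(1)\Rightarrow(3)\Leftrightarrow(4)\Rightarrow(2)$. The implication $(2)\Rightarrow(1)$ is immediate: if $\A$ is relatively computably categorical and $\B$ is any computable copy, then the diagram of $\B$ is computable, so the isomorphism guaranteed by relative categoricity is outright computable. The substantive content lies in the remaining three links, and I expect the diagonalization for $(1)\Rightarrow(3)$ to be the main obstacle.

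For $(3)\Leftrightarrow(4)$ I would argue purely order-theoretically. Call a maximal chain of elements linked by successor steps a \emph{successivity block}; a block of size $k\ge 2$ contributes exactly $k-1$ successor pairs, so having finitely many successivities is equivalent to $\A$ splitting into finitely many finite successivity blocks together with a remainder containing no successor pairs. An element lying in no block is neither an immediate successor nor an immediate predecessor, so the "gaps" between and around consecutive blocks contain no successivities and are therefore either empty or \emph{densely} ordered; as countable dense orders without interior endpoints they are copies of $\Q$ (with endpoints absorbed into neighbouring blocks or the outer pieces $L_0, L_n$). Collecting the finite blocks as the $L_i$ and the dense gaps as the $\Q$ summands yields the normal form in $(4)$, where interior blocks are forced to satisfy $|L_i|\ge 2$ precisely because a lone point flanked by $\Q$ on both sides is absorbed, $\Q + 1 + \Q \cong \Q$. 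Conversely, in any order of the form in $(4)$ the only successor pairs occur within the finite chains $L_i$, so there are only finitely many successivities.

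For $(4)\Rightarrow(2)$ I would prove relative computable categoricity by a back-and-forth argument using finitely many parameters. Fix the finite tuple $\vec c$ consisting of the elements of the interior finite blocks $L_1,\dots,L_{n-1}$ together with the relevant boundary elements of $L_0$ and $L_n$; these parameters mark the order-theoretic skeleton. Given any computable copy $\B\cong\A$, its diagram locates the matching skeleton in $\B$, since the images of $\vec c$ are pinned down by the finitely many possible order-patterns, which the diagram decides. Each remaining element $x$ of $\A$ then falls, computably in the diagram, into exactly one of the finitely many dense $\Q$-gaps determined by its order relations to $\vec c$, and within a fixed gap the classical dense-order back-and-forth produces the correspondence. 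The parameters remove the finite ambiguity, so the whole map is computable relative to the diagrams of $\A$ and $\B$; equivalently, one may exhibit a formally c.e.\ Scott family of existential formulas over $\vec c$.

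The remaining and hardest step is $(1)\Rightarrow(3)$, which I would prove in contrapositive form: assuming $\A$ is computable with infinitely many successivities, I would build a computable copy $\B\cong\A$ admitting no computable isomorphism to $\A$, meeting requirements $R_e$: ``$\varphi_e$ is not an isomorphism $\A\to\B$.'' The leverage is that the successivity relation is only $\pz$, so a total computable $\varphi_e$ cannot reliably track it. I would assign to each $R_e$ a fresh apparent successor pair $(p,q)$ of $\A$ (available infinitely often by hypothesis) and a reserved pair of points in $\B$, while keeping a reservoir of yet-unplaced elements so that, if $\varphi_e$ ever commits to mapping $p,q$ onto an adjacent pair of $\B$, we may insert a reserved element between their images, destroying adjacency in $\B$ and refuting $\varphi_e$ while keeping $\B\cong\A$. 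The delicate point, and the main obstacle, is maintaining $\B\cong\A$ globally while retaining local freedom to break adjacencies: this requires the standard device of delaying the commitment of the exact order type in each reserved region until $\varphi_e$ shows its hand, and coordinating the finitely injured requirements so that every block of $\A$ is eventually realized exactly once in $\B$.
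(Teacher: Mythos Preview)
The paper does not prove this theorem; it is stated with attribution to Remmel \cite{R81} (and, in the bibliography, also to Goncharov--Dzgoev \cite{GD}) and used as background for the paper's own results on weakly ultrahomogeneous linear orders. There is therefore no proof in the paper against which to compare your proposal.

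That said, your outline is a reasonable sketch of how the theorem is in fact established. The cycle you propose is the standard one: $(2)\Rightarrow(1)$ is trivial, $(3)\Leftrightarrow(4)$ is elementary order theory, and $(4)\Rightarrow(2)$ follows from a parametrized back-and-forth. Indeed, the paper essentially carries out this last implication in its own Theorem~\ref{wulo}, showing that a linear order with finitely many successivities is weakly ultrahomogeneous with the set of successivities and endpoints as an exceptional set; combined with Theorem~\ref{wmain}(2) this yields relative computable categoricity, exactly your $(4)\Rightarrow(2)$.

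The substantive direction $(1)\Rightarrow(3)$ requires a genuine priority construction, and here your sketch, while pointing in the right direction, leaves the real difficulty unresolved. The issue is not merely breaking one adjacency per requirement but doing so while guaranteeing that the copy $\B$ you build is globally isomorphic to $\A$: you must simultaneously realize every finite successivity block of $\A$ exactly once in $\B$, yet infinitely many requirements each want the freedom to insert or withhold points in their reserved regions. Phrases like ``delaying commitment'' and ``finitely injured requirements'' gesture at the right machinery, but the coordination between requirements and the verification that $\B\cong\A$ at the end are where the actual work lies, and these would need to be spelled out before this counts as a proof.
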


Next we give a characterization of weakly ultrahomogeneous linear orders.

\begin{theorem} \label{wulo} A countable linear order $\mathcal{A}$ is weakly ultrahomogeneous
iff $\mathcal{A}$ has finitely many successivities.
\end{theorem}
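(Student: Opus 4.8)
Since a linear order is a relational structure, every finitely generated substructure is just a finite subset with the induced ordering, so $\langle X\rangle = X$ and an order-isomorphism between finite subsets is exactly an isomorphism of the generated substructures. I will invoke Remmel's theorem throughout, so that ``finitely many successivities'' is interchangeable with the canonical form $\mathcal{A}=L_0+\Q+L_1+\Q+\cdots+\Q+L_n$. For the direction ($\Leftarrow$), assume this form and take the exceptional set $E$ to be the (finite) union $\bigcup_{i\le n}L_i$ of all elements lying in the finite blocks; this captures every successivity together with any isolated endpoints. Fixing $E$ pointwise partitions $\mathcal{A}\setminus E$ into precisely the $\Q$-summands (inside a block the consecutive elements are adjacent, so no free points occur there), each of which is a dense order without endpoints. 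Given any $\langle E,\vec x\rangle\cong\langle E,\vec y\rangle$ fixing $E$, the order relations to the points of $E$ force each $x_k$ and its image into the same region, so the map restricts to a finite order-isomorphism on each $\Q$-region; by the standard back-and-forth for $\Q$ each such restriction extends to an automorphism of that region, and gluing these with the identity on $E$ yields an automorphism of $\mathcal{A}$ extending the given map. Hence $(\mathcal{A},E)$ is ultrahomogeneous.

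For the direction ($\Rightarrow$), assume $\mathcal{A}$ is weakly ultrahomogeneous with exceptional set $E=\{a_1,\dots,a_n\}$, and let $I$ be any maximal convex piece of $\mathcal{A}\setminus E$ (an open interval between consecutive points of $E$, or one of the two unbounded end pieces); there are at most $n+1$ such regions. The key step is to show each $I$ is itself an ultrahomogeneous linear order. Given an order-isomorphism $\psi$ between finite subsets of $I$, extend it by the identity on $E$; since every point of $I$ stands in the same order relation to each $a_j$, this is an isomorphism of $\langle E,\vec x\rangle$ onto $\langle E,\psi(\vec x)\rangle$ fixing $E$, so by weak ultrahomogeneity it extends to an automorphism $\alpha$ of $\mathcal{A}$ fixing $E$. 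As $\alpha$ is order-preserving and fixes the endpoints of $I$, it maps $I$ onto itself, so $\alpha\restr I$ is an automorphism of $I$ extending $\psi$. Thus $I$ is ultrahomogeneous, and by the uniqueness result quoted in the introduction each infinite $I$ is isomorphic to $\Q$ and so contains no successivity at all, while each finite $I$ contributes only finitely many.

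It then remains to bound the successivities that involve points of $E$: each of the finitely many $a_j$ lies in at most one successor pair as a predecessor and one as a successor, and there are at most $|E|$ pairs internal to $E$, so boundary successivities number at most $3|E|$. Combining this with the finitely many successivities from the finitely many finite regions shows $\mathcal{A}$ has only finitely many successivities, completing the argument. I expect the main obstacle to be the key step of the forward direction, namely verifying that fixing $E$ genuinely forces each region $I$ to be ultrahomogeneous and, in particular, handling the unbounded end regions and keeping the boundary successivities (those pairing a point of $I$ with an adjacent point of $E$) cleanly separated from the internal ones; the gluing in the reverse direction is routine once the region decomposition is in hand.
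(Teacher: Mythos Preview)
Your proof is correct. The backward direction ($\Leftarrow$) is essentially identical to the paper's: take the set of successivities and endpoints as the exceptional set and use the ultrahomogeneity of each $\Q$-block to extend.

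The forward direction ($\Rightarrow$) is genuinely different. The paper argues the contrapositive by a case analysis on how infinitely many successivities can sit inside $\mathcal{A}$: they form either a block of type $\omega$, $\omega^*$, $\Z$, or infinitely many finite chains, and in each case one writes down explicit tuples $\vec{x},\vec{y}$ outside the candidate exceptional set whose partial isomorphism cannot extend. Your approach is more structural: you show that every maximal convex region $I$ of $\mathcal{A}\setminus E$ inherits ultrahomogeneity from $(\mathcal{A},E)$, and then invoke the uniqueness of $\Q$ to conclude that infinite regions contribute no successivities. This avoids the case split entirely and in fact yields slightly more than the theorem asks for: your argument shows that for \emph{any} exceptional set $E$, each component of $\mathcal{A}\setminus E$ is either a singleton or a copy of $\Q$ (a finite ultrahomogeneous linear order has at most one point, so your ``each finite $I$ contributes finitely many'' is true but understates what you have proved). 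This is close in spirit to the paper's later Proposition~3.1 on characterizing exceptional sets. The counting of boundary successivities at the end is a bit loose (the bound $2|E|$ suffices, since each element of $E$ is the smaller member of at most one successor pair and the larger member of at most one), but of course any finite bound does the job.
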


\begin{proof}
First, suppose $\mathcal{A}$ has infinitely many
  successivities.  Let $\{a_1,\ldots, a_n\}$ be a finite subset of
  $A$; we will show it cannot be an exceptional set.  The
  successivities of $\mathcal{A}$ occur in finite chains or in subsets
  of order type $\omega, \omega^*$ (the reverse order of $\omega$), or
  $\Z$.  If there is a set $C$ of successivities that has order type $\omega$ or
  $\Z$, choose elements $x_1<x_2$ and $y_1< y_2\in C$ greater than all
  $a_i\in C$ such that there are more elements between $x_1$ and $x_2$
  than there are between $y_1$ and $y_2$.  Then
  $\la\vec{a},x_1,x_2\ra\cong\la\vec{a},y_1,y_2\ra$, but the
  isomorphism can't be extended to an automorphism.  If $C$ is of
  order type $\omega^*$, we can repeat the argument above by choosing
  the elements below all $a_i$ in $C$.  Finally, if $\mathcal{A}$ has
  infinitely many successivities in finite chains, choose one of these
  chains containing none of the $a_i$ and from it choose the first
  element $x$ and the second element $y$.  Then
  $\la\vec{a},x\ra\cong\la\vec{a},y\ra$, but the isomorphism can't be
  extended to an automorphism.  With infinitely many successivities
  one of these situations must occur, and in any case we see
  $\{a_1,\ldots, a_n\}$ isn't exceptional, so $\mathcal{A}$ isn't
  weakly ultrahomogeneous.  \\ \\
For the other direction, let $S=\{a_1,\ldots,a_k\}$ be the set of successivities and endpoints of $\A$. We claim that this set is exceptional.
  To see this, suppose  $\la\vec{a},\vec{x}\ra\cong\la\vec{a},\vec{y}\ra$ 
with $a_i\to a_i$  for $i\leq k$ and $x_j\to y_j$ for $j\leq n$.  
Also assume that  $x_j,y_j\notin S$ for $j\leq n$.  So for each $j\leq
n$, $x_j$ and $y_j$
  are in the same copy of $\Q$, since they bear the same relation to
  all elements of $S$.  Then use the ultrahomogeneity within each copy of $\Q$ containing some $x_i, y_i$ and the identity elsewhere to get an automorphism of the whole structure.
\end{proof}

\begin{corollary} A computable linear order $\mathcal{A}$ is
  computably categorical iff $\mathcal{A}$ is weakly
  ultrahomogeneous. \end{corollary}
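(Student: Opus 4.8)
The plan is to combine the two characterizations that immediately precede this corollary, both of which pass through the single intermediate condition that $\A$ have finitely many successivities. First I would invoke Remmel's theorem, which states that for a (countable) computable linear order $\A$, computable categoricity is equivalent to $\A$ having finitely many successivities. Then I would invoke Theorem \ref{wulo}, which states that an arbitrary countable linear order is weakly ultrahomogeneous if and only if it has finitely many successivities; this applies in particular to computable $\A$. Chaining these two biconditionals through their common middle term gives, for computable $\A$, that $\A$ is computably categorical iff $\A$ has finitely many successivities iff $\A$ is weakly ultrahomogeneous, which is exactly the claimed equivalence.

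Since both cited results are already proved (Remmel's as a quoted theorem, Theorem \ref{wulo} just above), there is essentially no obstacle to overcome here: the corollary is a formal consequence, and the only thing worth checking is that the hypotheses align. That check is immediate, since both theorems concern countable linear orders and the corollary restricts attention to computable ones, so both statements apply without any gap. I would resist the temptation to reprove either direction of the iff directly (e.g.\ by constructing automorphisms or non-categorical copies), as that would merely duplicate the content of the two theorems; routing through the "finitely many successivities" characterization is the efficient argument. The resulting proof is therefore a one- or two-line deduction, and I would write it simply as the transitive composition of the two established equivalences.
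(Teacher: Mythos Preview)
Your proposal is correct and matches the paper's approach exactly: the corollary is stated without proof in the paper, as it follows immediately by chaining Remmel's theorem and Theorem~\ref{wulo} through the common condition of having finitely many successivities.
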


\begin{corollary} For any countable weakly homogeneous linear order $\A$, 
there is a computable structure isomorphic to $\A$.
\end{corollary}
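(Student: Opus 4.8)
The plan is to combine Theorem~\ref{wulo} with a purely order-theoretic structure theorem and the trivial observation that the resulting canonical order type admits a computable presentation. By Theorem~\ref{wulo}, $\A$ is weakly ultrahomogeneous exactly when it has only finitely many successivities, so it suffices to show that every countable linear order with finitely many successivities has a computable copy.

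First I would establish that any countable linear order $\A$ with finitely many successivities has order type $L_0 + \Q + L_1 + \Q + \cdots + \Q + L_n$ for finite chains $L_i$ (with $L_0, L_n$ possibly empty). This is precisely the equivalence (3)$\iff$(4) of Remmel's theorem, and — crucially — that equivalence concerns the order type alone and uses no computability (the hypothesis that $\A$ is computable is needed only for clauses (1) and (2)); hence it applies to an arbitrary countable $\A$. If one prefers a self-contained argument, I would: (i) collect the finitely many successivities into their maximal adjacency-blocks, each of which must be a finite chain, since an $\omega$-, $\omega^*$-, or $\Z$-block would contribute infinitely many successivities; (ii) observe that each region strictly between two consecutive blocks, together with the region below the first and above the last block, contains no successivity and is therefore dense; (iii) note that such an inter-block region can have no least or greatest element, since such an element would be adjacent to the neighbouring block and hence itself be a successivity, so by Cantor's classification each nonempty inter-block region is isomorphic to $\Q$, while a one-sided region can only acquire the global least or greatest element, which is absorbed into $L_0$ or $L_n$. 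Assembling the pieces yields the stated form.

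Finally, I would note that this order type is manifestly computable: $\Q$ has its standard computable presentation, each finite chain $L_i$ is computable, and the concatenation of finitely many computable linear orders — with the finite data $n$ and $|L_0|,\dots,|L_n|$ fixed in advance, since $\A$ is a single fixed structure — is again computable: place the pieces on disjoint computable subsets of $\N$ and define the order across pieces according to their fixed left-to-right arrangement. Thus $\A$ has a computable copy.

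The main obstacle is the middle step: making the structure theorem rigorous for an arbitrary, possibly non-computable, countable order without invoking the effective clauses of Remmel's characterization. The delicate points are verifying density of the inter-block regions and the careful bookkeeping of endpoints, including the degenerate cases where $\A$ is finite, or dense with no successivities at all, so that $n=0$ and some $L_i$ are empty. Once the order type is pinned down, the existence of a computable copy is immediate.
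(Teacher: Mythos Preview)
Your argument is correct and is precisely the intended one: the paper states this corollary without proof, relying implicitly on Theorem~\ref{wulo} together with the purely order-theoretic equivalence (3)$\iff$(4) of Remmel's theorem to obtain the decomposition $L_0+\Q+L_1+\cdots+\Q+L_n$, which manifestly has a computable copy. Your self-contained sketch of that decomposition is a welcome addition, since the paper simply takes it for granted.
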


Now we consider index sets for linear orderings.  The $e$th computable linear order $\A_e = (\omega, <_e)$ is given by the
$e$th partial recursive function $\phi_e$ when $\phi_e$ is total and is 
the characteristic function of a linear ordering. It is easy to see that $LIN = \{e: \A_e \ \text{is a linear ordering}\}$ 
is a $\Pi^0_2$ set.  

Let $UHL = \{e: \A_e\ \text{is an ultrahomogeneous linear ordering}\}$ and let 
\\$WUL = \{e: \A_e\ \text{is a weakly ultrahomogeneous linear ordering}\}$.

\begin{theorem}
\begin{enumerate}
\item[(a)] The index set $UHL$  is $\Pi^0_2$ complete,
and in fact $\Pi^0_2$ complete relative to $LIN$. 
\item[(b)] The index set $WUL$ is $\Sigma^0_3$ complete,
and in fact $\Sigma^0_3$ complete relative to $LIN$. 
\end{enumerate}
\end{theorem}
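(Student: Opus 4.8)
The plan is to reduce both parts to the two characterizations already in hand: for the infinite orders $\A_e$, ultrahomogeneity is equivalent to being isomorphic to $(\Q,<)$, i.e.\ to being a dense linear order without endpoints, and by Theorem \ref{wulo} weak ultrahomogeneity is equivalent to having only finitely many successivities. The upper bounds then follow by counting quantifiers. For (a), on inputs in $LIN$ the relation $<_e$ is total computable, density ($\forall x,y\,\exists z$) and the absence of a least or greatest element ($\forall x\,\exists y$) are each $\Pi^0_2$, and $LIN$ itself is $\Pi^0_2$; their conjunction is $\Pi^0_2$, so $UHL\in\Pi^0_2$. For (b), the predicate ``$x$ is a successivity'' is $\Sigma^0_2$ (an existential quantifier over a neighbour in front of the $\Pi^0_1$ statement that the relevant pair is a successor pair), and ``the set of successivities is finite'' then has the form $\exists N\,\forall x_0<\dots<x_N\,\neg\bigwedge_i(x_i\ \text{is a successivity})$, which is $\Sigma^0_3$; intersecting with $LIN$ keeps it $\Sigma^0_3$, so $WUL\in\Sigma^0_3$.

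For the hardness in (a) I would give a computable $f$ reducing the $\Pi^0_2$-complete set $INF$ to $UHL$ with $\A_{f(e)}$ always a linear order. I build $\A_{f(e)}$ in stages as an increasing union of finite linear orders, placing the least unplaced natural number at each stage so that all of $\omega$ is used. The order has three regions: a left copy of $\Q$ built to be unbounded below and cofinal up to a distinguished anchor $a$, a right copy of $\Q$ built to be unbounded above and coinitial down to a distinguished anchor $b$, and a middle consisting of the interval $(a,b)$, which starts with $a<b$ adjacent. The left and right regions are extended at every stage, independently of $W_e$, while a new point is inserted into $(a,b)$, breaking its oldest current gap, exactly when $W_e$ receives a new element. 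If $W_e$ is infinite the middle is densely filled with no endpoints, the anchors $a,b$ acquire neither predecessor nor successor, and $\A_{f(e)}$ is dense without endpoints, hence $\cong\Q$; if $W_e$ is finite the middle is a finite chain, producing a successivity, so $\A_{f(e)}$ is not dense and not $\cong\Q$. Thus $e\in INF\iff f(e)\in UHL$, and since $INF$ is $\Pi^0_2$-complete and $f(e)\in LIN$ always, $UHL$ is $\Pi^0_2$-complete relative to $LIN$.

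For (b) I would reduce the $\Sigma^0_3$-complete set $COF$ to $WUL$. Here $\A_{f(e)}$ is built as an $\omega$-sum of blocks $B_0<B_1<\dots$, where $B_n$ is intended to be a copy of $\Q$ if $n\in W_e$ and a two-element chain otherwise. Each $B_n$ is created with a two-element base (the two least unplaced numbers), and from the stage at which $n$ enters $W_e$ onward it is grown, by adding new points, into a dense order with no least or greatest element; a round-robin placement of the least unplaced number guarantees every element is placed. A copy of $\Q$ contributes no successivity and, having no endpoints, creates none at its boundaries, whereas each surviving two-element block contributes its internal successor pair (and at most two boundary ones). Hence $\A_{f(e)}$ has finitely many successivities iff all but finitely many $B_n$ are copies of $\Q$, i.e.\ iff $W_e$ is cofinite. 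By Theorem \ref{wulo} this says $f(e)\in WUL\iff e\in COF$, and since each $\A_{f(e)}\in LIN$ this gives $\Sigma^0_3$-completeness of $WUL$ relative to $LIN$.

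The quantifier counts and the verification of the limiting order types are routine. The main obstacle, and where I would spend the care, is the bookkeeping in the two stage constructions: I must simultaneously place every natural number and guarantee that the always-growing pieces (the left and right copies of $\Q$ in (a), the $\Q$-blocks in (b)) genuinely become dense and endpoint-free in the limit, so that the target property can fail only through the coded combinatorics and never through an artifact of the construction. In particular, in (a) the left and right regions must keep growing even when $W_e$ is very sparse or very dense, so that the anchors never accidentally become endpoints and the empty-middle case $W_e=\emptyset$ still yields the successivity $a<b$ rather than a dense order.
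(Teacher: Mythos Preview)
Your proposal is correct: the upper bounds are exactly the paper's (density/no endpoints for (a), quantifying over a finite bound on successivities for (b)), and for hardness you reduce $INF$ and $COF$ just as the paper does. The specific constructions differ, however.

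For (a), the paper's reduction is leaner than yours. It dispenses with the two side copies of $\Q$ and the anchors: at stages $3t+1$ and $3t+2$ it merely adds a new least and a new greatest element, and at stage $3t+3$ it breaks the oldest successor pair if and only if a new element enters $W_e$. Thus when $W_e$ is finite the order collapses to $\Z$ rather than to your $\Q+\text{finite chain}+\Q$. Your version is equally valid but carries extra machinery (the side regions) whose only purpose is to guarantee no endpoints; the paper achieves that with two lines.

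For (b), the paper takes a quite different route: instead of your modular $\omega$-sum of blocks $B_n$ indexed by $n$, it reuses the single global order from (a) and changes only the rule at stage $3t+3$, now breaking the least successor pair \emph{both of whose elements, as natural numbers, lie in $W_{e,t+1}$}. Any surviving successor pair must then contain a number outside $W_e$, so cofiniteness of $W_e$ forces finitely many successivities; conversely each $a\notin W_e$ keeps the neighbour it had when it entered. Your block construction is more transparent---the bijection between $n$ and $B_n$ makes the reduction visible---while the paper's trick of identifying the universe with $\omega$ and using $W_e$-membership of the elements themselves is slicker and avoids all the block-boundary bookkeeping you flag as the delicate part.
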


\begin{proof} (a) $UHL$ is a $\Pi^0_2$ set,  since $\A_e$ is ultrahomogeneous if and  only if it is isomorophic to $(\Q,<)$, that is, if and only if it is dense and without endpoints. 
 For the completeness, we give a reduction $f$ of the $\Pi^0_2$ complete set $INF$ to
$UHL$ in such a way that $f(e) \in LIN$ for every $e$. The construction of $\A_{f(e)}$ is in stages $\A^s$ as follows (we suppress the subscripts for ease
of comprehension).  $\A_0 = \{0\}$.  After any stage $s$, we will have a linear order $A^s = \{a^s_0 < a^s_1 < \dots < a^s_s\}$.  At any stage $s+1 = 3t+1$,
we ensure that $\A$ will have no least element by letting $a^{s+1}_0 = s+1$ and $a^{s+1}_{i+1} = a^s_i$ for all $i \leq s$.  Similarly  at any stage $s+1  = 3t+2$,
we ensure that $\A$ will have no greatest element by letting $a^{s+1}_{s+1} = s+1$ and $a^{s+1}_i = a^s_i$ for all $i \leq s$. Finally, at any stage $s+1 = 3t+3$,
we ensure that $\A$ will be densely ordered, if $W_e$ is infinite, as follows.  There are two cases.  If an element enters $W_e$ at stage $t+1$, do the following:  Let $\langle a^s_j, a^s_{j+1} \rangle$ be the least code for a successor pair in $\A^s$ and put $s+1$ between them, so that $a^{s+1}_i = a^s_i$ for $i \leq j$, $a^{s+1}_{j+1} = s+1$, and $a^{s+1}_{i+1} = a^s_i$ for $i > j$. If no element enters $W_e$ at stage $t+1$, then make $s+1$ smaller than all elements from $\A_s$ as in the case for $s+1 = 3t+1$ above. 

If $W_e$ is infinite, then it follows from the construction that $\A_{f(e)}$ will be a dense linear ordering without endpoints. If $W_e$ is finite, then after some stage $s$,
no new elements enter $W_e$, so that the block of successors from $a_0^s$ to $a_s^s$ is preserved and $\A_{f(e)}$ will be isomorphic to $\Z$. 

(b) It is easy to see, by quantifying over the finite exceptional set, that $WUL$ is a $\Sigma^0_3$ set. For the completeness, we give a reduction $g$ of the $\Sigma^0_3$ complete set $COF$ to $WUL$ in such a way that $g(e) \in LIN$ for every $e$. The construction of $\A_{g(e)}$ is a modification of the construction in part (a). The difference is that in the case $s +1 = 3t+3$, we look for the least code $\langle a^s_j, a^s_{j+1} \rangle$ for  a successor pair  such that both elements have come into $W_e$ by stage $t+1$. 

It follows that for any pair $a,b$ such that $b$ is the successor of $a$ in $\A_{g(e)}$, either $a$ or $b$ is not in $W_e$.  Thus, if  $W_e$ is cofinite, then $\A_{g(e)}$ has 
only finitely many successivities. On the other hand, if $a \notin W_e$, then it has either a successor or a predecessor when it comes into $\A_e$, and by the new requirement no element is ever put in between.  Thus, if $W_e$ is not cofinite, then $\A_{g(e)}$ will have infinitely many successivities. 
\end{proof}

Let us say that an exceptional set $S$ for a weakly ultrahomogeneous
structure is a \emph{minimal exceptional set} if no proper subset of
$S$ is exceptional.  Such as set must exist since exceptional sets are finite. We
will try to characterize minimal exceptional sets and determine
whether they are unique to a structure, or perhaps unique up to
automorphism.  Let us say that $a$ is a \emph{special point} of a linear order $\A$ is it is either an endpoint or a successivity.
Thus the set of special points of a countable weakly ultrahomogeneous linear order $\mathcal{A}=L_0+\Q+L_1+\Q+\ldots+\Q+L_n$ is $L_0 \cup L_1 \cup \dots \cup L_n$. 
 
For a weakly ultrahomogeneous linear order, the set
of all special points is an exceptional set, but this set is not
necessarily minimal.  A characterization of exceptional sets of special points  is as follows.

\begin{proposition} \label{prop1}  Let $\mathcal{A}=L_0+\Q+L_1+\Q+\ldots+\Q+L_n$ be a
  countable weakly ultrahomogeneous linear order.  A set of $S=\{a_1,\ldots,a_n\}$ of special points is exceptional iff it satisfies the following conditions:
  \begin{enumerate}
\item[(i)]$A \setminus S$ does not include any successor pair.
\item[(ii)] $S$ contains each last element of $L_0,\ldots L_{n-1}$ and 
each first element of $L_1,\ldots L_n$.
\end{enumerate}
\end{proposition}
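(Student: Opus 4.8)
The plan is to reduce the statement to a clean structural dichotomy about the intervals that $S$ cuts out of $\A$. Recall that $S$ is exceptional precisely when $(\A,S)$, the order with constants naming the points of $S$, is ultrahomogeneous. An automorphism of $(\A,S)$ is exactly an automorphism of $\A$ fixing each point of $S$, and such a map must send each of the open intervals determined by consecutive points of $S$ --- together with the two end rays below $\min S$ and above $\max S$ --- onto itself; conversely, any family of automorphisms of these intervals glues, via the identity on $S$, to an automorphism of $(\A,S)$. Hence a finite partial isomorphism fixing $S$ decomposes intervalwise, and $(\A,S)$ is ultrahomogeneous if and only if each interval determined by $S$ is ultrahomogeneous as a linear order. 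Using the easily checked fact that the only countable ultrahomogeneous linear orders are the empty order, the one-point order, and $\Q$, the goal becomes: $S$ is exceptional iff every interval determined by $S$ is empty, a single point, or a copy of $\Q$. I would then show this last condition is equivalent to (i) and (ii).

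For the forward direction I would split the intervals into two types. An interval meeting one of the dense blocks $\Q$ must, under hypothesis (ii), consist of exactly that block: the last point of each $L_i$ ($i\le n-1$) and the first point of each $L_j$ ($j\ge 1$) lie in $S$ and bracket the corresponding copy of $\Q$, which contains no special points, so the interval is that $\Q\cong\Q$ (the two end rays fall under the same dichotomy: a copy of $\Q$ when $L_0$ or $L_n$ is empty, and otherwise a discrete tail inside $L_0$ or $L_n$ bounded by (i)). An interval disjoint from every $\Q$ block lies inside a single finite chain $L_i$ and is thus a finite discrete order; hypothesis (i) forbids two consecutive points of $A\setminus S$, so such an interval has at most one point. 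Thus (i) and (ii) force every interval to be empty, a point, or $\Q$.

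For the converse I would argue the contrapositive, producing a ``bad'' interval from the failure of either condition. If (i) fails, a successor pair $c,d\notin S$ lies in a common interval $I$, so $I$ contains an immediate successor pair and is neither a point nor $\cong\Q$. If (ii) fails, say the last point $\ell_i$ of some $L_i$ ($i\le n-1$) is not in $S$, then the interval $I$ containing $\ell_i$ also contains the whole adjacent block $\Q$, which has no $S$-points, so $I$ is infinite; moreover $\ell_i$ is either the least element of $I$ or has an immediate predecessor in $I$, both impossible in $\Q$, so $I\not\cong\Q$. Either way $I$ is bad, its non-extending partial isomorphism lifts via the identity on $S$ to one of $(\A,S)$, and $S$ fails to be exceptional; the missing-first-point case $f_j\notin S$ is symmetric. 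I expect the main obstacle to be exactly this necessity-of-(ii) step: one must verify in each configuration that the interval abutting a dense block across a missing boundary really does acquire an endpoint or an immediate predecessor/successor, which forces a careful treatment of the possibly-empty end blocks $L_0,L_n$ and of small chains, where the relevant point may be a global endpoint rather than an interior successivity.
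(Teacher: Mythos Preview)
Your proposal is correct and takes a genuinely different route from the paper's proof. The paper argues directly: if (i) fails, the two members $x,y$ of the missing successor pair satisfy $\langle\vec a,x\rangle\cong\langle\vec a,y\rangle$ but sit at different positions in their finite chain $L_i$; if (ii) fails, the missing boundary point $x$ and any $y$ in the adjacent copy of $\Q$ satisfy $\langle\vec a,x\rangle\cong\langle\vec a,y\rangle$ but $x$ is a special point while $y$ is not. For sufficiency, the paper observes that under (i) and (ii) any special point outside $S$ is \emph{uniquely determined} by its order relations to $S$ (there is at most one such point between consecutive members of $S$, or beyond the ends), so every partial isomorphism fixing $S$ already fixes all special points, and the remaining $x_j,y_j$ land in the same copy of $\Q$ where one uses ultrahomogeneity of $\Q$ directly.

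Your interval-decomposition approach replaces these ad hoc witnesses with a single structural equivalence: $S$ is exceptional iff every $S$-interval is ultrahomogeneous, hence empty, a point, or $\Q$. This is a cleaner reformulation and makes the role of (i) and (ii) transparent --- (i) controls the intervals that stay inside a single $L_i$, while (ii) ensures that intervals crossing into a dense block coincide with that block. The cost is that you must state and justify two auxiliary facts (the gluing lemma for automorphisms fixing $S$, and the classification of countable ultrahomogeneous linear orders), whereas the paper's argument needs neither explicitly. Your anticipated difficulty in the necessity-of-(ii) step is real but minor: when $L_0=\{\ell_0\}$ the offending point is a global endpoint rather than a successivity, but it is still the least element of its interval, and your dichotomy ``least element or has an immediate predecessor in $I$'' already covers this.
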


\begin{proof}  If (i) fails for $S$, let $x,y \notin S$, with $y$ the successor of $x$, so that $x,y$ belong to
some $L_i$.  Then $\la \vec{a},x \ra \cong\la\vec{a},y \ra$,
 but the isomorphism can't extend since $x$ and $y$ are in different
 positions in the finite sequence $L_i$.  

If $(ii)$ fails for $S$,  let $x$ witness its failure and let $y$ be an element of the copy of
  $\Q$ adjacent to $x$.  Then $\la \vec{a},x\ra\cong\la\vec{a},y\ra$,
  but the isomorphism can't extend since $x$ is a successivity and $y$
  isn't.  

Now suppose $i)$ and $ii)$ hold for
  $S$ and suppose   $\la\vec{a},\vec{x}\ra\cong\la\vec{a},\vec{y}\ra$ by $\varphi$, where each $x_i$ and $y_j$ is not in $S$.
 Any   $x_j$ which is a special point is either uniquely between two $a_i$, or
  is the unique element greater than or less than all $a_i$ if it is
  an endpoint. Hence $y_j = \varphi(x_j) = x_j$. Any non-special points
  $x_j,y_j$ must be in the same copy of $\Q$ and it follows from the
  ultrahomogeneity of $\Q$ that we can extend $\varphi$ to an
  automorphism.
\end{proof}

Given an exceptional set, a (possibly) smaller exceptional set is obtained by removing all non-special points. So the above proposition says that minimal exceptional sets are sets of special points  satisfying the two conditions while no proper subset satisfies them both.

As an example, consider the linear order $\Q+L_0+\Q$ where $L_0=\{a_1<
a_2<\ldots< a_5\}$.  Then both $\{a_1,a_3,a_5\}$ and
$\{a_1,a_2,a_4,a_5\}$ are minimal exceptional sets.  This shows that,
while we would like the exceptional sets of a weakly ultrahomogeneous
structure to be unique in some way, minimal exceptional sets aren't
necessarily unique and in fact need not even be isomorphic.
\\ \\\indent Recall that in a structure $\A$, an element $b\in A$ is
definable from a set $S\subseteq A$ if $\{b\}$ is a subset of $A$
definable from $S$.  Let the definable closure of $S$ be $D(S)=\{x\in
A: x \mbox{ is definable from }S\}$.  An important fact about
definability we will use repeatedly is that if $b$ is definable from
$S$ and $\sigma$ is an automorphism of $\A$ fixing all elements of
$S$, then $\sigma$ also fixes $b$.  Looking at definable closures
reveals a sense in which minimal exceptional sets are unique.

\begin{proposition} \label{prop2} Let $\mathcal{A}$ be a weakly ultrahomogeneous
  linear order and let $M=\{a_1<\ldots<a_n\}$ be a minimal exceptional
  set.  Then $D(M)$ is the set of special points of
  $\mathcal{A}$. \end{proposition}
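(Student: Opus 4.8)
The goal is to establish both inclusions: every special point lies in $D(M)$, and conversely every element of $D(M)$ is special. The plan is to work throughout with the canonical decomposition $\mathcal{A}=L_0+\mathbb{Q}+L_1+\mathbb{Q}+\ldots+\mathbb{Q}+L_n$ supplied by Theorem \ref{wulo}, so that the special points are precisely the (finitely many) elements of $\bigcup_i L_i$, while the non-special points are exactly those lying in the $\mathbb{Q}$-summands.

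For the inclusion $D(M)\supseteq\{\text{special points}\}$ I would in fact show the stronger statement that every special point is definable \emph{without} parameters. The set $S$ of successivities is $0$-definable, via the formula asserting that $x$ has an immediate successor or an immediate predecessor, and by Theorem \ref{wulo} it is finite. Since $S$ is a finite $0$-definable subset of a linear order, each of its members is $0$-definable as ``the $j$-th element of $S$'' for the appropriate fixed $j$ (a legitimate first-order formula counting the elements of $S$ below $x$). A special point that is an endpoint but not a successivity — such as a singleton $L_0$ or $L_n$ — is the least or greatest element and is likewise $0$-definable. Hence every special point lies in $D(\emptyset)\subseteq D(M)$; notably this direction does not use $M$ at all.

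For the inclusion $D(M)\subseteq\{\text{special points}\}$ the hypothesis that $M$ is minimal becomes essential. By the remark following Proposition \ref{prop1}, a minimal exceptional set consists only of special points, so $M$ is disjoint from every $\mathbb{Q}$-summand. Given a non-special $x$, let $Q$ be the maximal convex $\mathbb{Q}$-copy containing it. Since $Q\cong\mathbb{Q}$ is a dense order without endpoints, it carries an order-automorphism $\tau$ with $\tau(x)\neq x$. Because $Q$ is convex, every element outside $Q$ lies entirely below or entirely above $Q$, so extending $\tau$ by the identity off $Q$ yields a genuine order-automorphism $\sigma$ of $\mathcal{A}$. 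As $M\cap Q=\emptyset$, the map $\sigma$ fixes $M$ pointwise while moving $x$; invoking the recalled fact that any point definable from a set is fixed by every automorphism fixing that set, we conclude $x\notin D(M)$.

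The main obstacle lies in this second inclusion: one must check that a purely local automorphism of a single $\mathbb{Q}$-copy lifts to a global automorphism of $\mathcal{A}$, which rests on the convexity of the $\mathbb{Q}$-summands and on $M$ meeting none of them (the role of minimality). The first inclusion is comparatively routine once one observes that finiteness of the successivity set lets each special point be named by its order-position.
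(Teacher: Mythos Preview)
Your argument is correct, and the second inclusion is handled just as in the paper: pick a non-special $x$, use the ultrahomogeneity of its $\mathbb{Q}$-block to move $x$ while fixing everything outside (in particular $M$, since by minimality $M$ consists only of special points), and conclude $x\notin D(M)$.

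The first inclusion is where you diverge from the paper. The paper actually uses the parameters in $M$: by the two conditions of Proposition~\ref{prop1}, any special point $x\notin M$ is either an endpoint (and then the unique element below $a_1$ or above $a_n$) or the unique element strictly between two consecutive $a_i<a_{i+1}$. This requires a small structural check---that between consecutive members of a minimal exceptional set there is at most one special point---which follows from conditions (i) and (ii). Your route bypasses this entirely: the set of successivities is $0$-definable and finite, so each of its members is $0$-definable by its position in that finite ordered set, and the endpoints are trivially $0$-definable. This is cleaner and yields the stronger statement that the special points lie in $D(\emptyset)$, so the minimality hypothesis is genuinely only needed for the other inclusion. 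The paper's approach, by contrast, makes explicit how $M$ itself carves out the special points, which ties the result more tightly to the characterization in Proposition~\ref{prop1}.
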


\begin{proof}
Suppose $x$ is in one of the copies of $\Q$ in $\mathcal{A}$.  Using
the ultrahomogeneity of $\Q$, there is an automorphism of
$\mathcal{A}$ moving $x$ while fixing $M$.  So $x\notin D(M)$.  Now
let $x$ be a special point of $\mathcal{A}$; we may assume $x\notin M$.
Then $x$ is the unique element satisfying $\phi(y) : (y<a_i)$ if it is a
left endpoint of $\mathcal{A}$, similarly for right endpoints, or else
it is the unique element satisfying $\phi(y) : (a_i<y<a_{i+1})$ for some
$i\leq n$.
\end{proof}

\section{Equivalence Structures}

The effective categoricity of equivalence structures was investigated
by Calvert, Cenzer, Harizanov and Morozov in \cite{CCHM06}. The
\emph{character} of an equivalence structure indicates the number of
equivalence classes of each size. The structure is said to have
\emph{bounded character} if there exists a $k\in\mathbb{N}$ such that 
all finite classes have size at most $k$. It is proved in
\cite{CCHM06} that an equivalence structure $\A$
is computably categorical iff $\A$ has finitely many finite classes,
or $\A$ has finitely many infinite classes, bounded character, and
there is at most one $k$ such that there are infinitely many classes
of size $k$.  This condition is equivalent to saying that all but finitely many
classes of $\A$ have the same size.  For computable equivalence structures,
computable categoricity is the same as relative computable categoricity. 

\begin{theorem}
A countable equivalence structure $\A$ is weakly ultrahomogeneous iff all but finitely many equivalence classes of $\A$ have the same size.
In this case, a minimal exceptional set contains exactly one element
from each of the exceptional equivalence classes.
\end{theorem}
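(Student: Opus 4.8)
The plan is to exploit the fact that an equivalence structure is a relational structure, so every finitely generated substructure is just a finite subset carrying the induced relation $E$, and a partial isomorphism fixing a finite set $S$ pointwise is simply an $E$-preserving injection that is the identity on $S$. The only automorphisms of an equivalence structure are the bijections that permute the equivalence classes, sending each class onto a class of the same cardinality; in particular \emph{every automorphism preserves the size of each class}, and this is the single fact that drives both directions. I will also use the already-noted principle that any superset of an exceptional set is exceptional.

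For the forward direction I would argue by contraposition. Suppose it is not the case that all but finitely many classes share a common size; I claim no finite $S=\{a_1,\dots,a_n\}$ can be exceptional. Since $S$ meets only finitely many classes, and since by hypothesis for every candidate size $k$ there are infinitely many classes of size $\neq k$, one checks that there must be two classes $C_1,C_2$ \emph{disjoint from} $S$ whose sizes differ (otherwise all classes not of the common size of the $S$-free classes would meet $S$, hence be finite in number). Choosing $x\in C_1$ and $y\in C_2$, neither $E$-related to any $a_i$, gives $\la\vec a,x\ra\cong\la\vec a,y\ra$ via $x\mapsto y$; but any extension to an automorphism would carry the class of $x$ onto the class of $y$, contradicting preservation of class size. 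Hence $S$ is not exceptional and $\A$ is not weakly ultrahomogeneous.

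For the converse and the description of minimal exceptional sets, let $k$ be the size shared by all but finitely many classes (when there are infinitely many classes this $k$ is unique; the case of finitely many classes is degenerate and is handled by taking one element from each class), and call a class \emph{exceptional} if its size is $\neq k$, so there are finitely many exceptional classes $E_1,\dots,E_m$. The heart of the argument is the characterization: \emph{$S$ is exceptional iff $S$ meets every exceptional class}. For the nontrivial implication I would first show that a set $T$ containing exactly one element from each $E_i$ is exceptional, by taking any partial isomorphism fixing $T$ and extending it to an automorphism class by class: each exceptional class must map to itself (its representative in $T$ is fixed, forcing each $x_j\in E_i$ to map to some $y_j\in E_i$), while the size-$k$ classes are matched so that the finitely many ``used'' source classes correspond bijectively to the ``used'' target classes and the remaining size-$k$ classes are paired off arbitrarily, a bijection within each matched pair of equal-size classes being obtained from the partial injection by a straightforward cardinality count. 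Any $S$ meeting every exceptional class contains such a $T$, hence is exceptional by the superset principle. Conversely, if $S$ misses some $E_i$, pick $x\in E_i$ and $y$ in a size-$k$ class disjoint from $S$; exactly as in the forward direction $x\mapsto y$ fixes $S$ but cannot extend, so $S$ is not exceptional.

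From this characterization the statement about minimal exceptional sets is immediate: a minimal set must meet each exceptional class (else it is not exceptional), and it can contain neither a second element of any class nor any element of a size-$k$ class, since deleting such an element leaves a set still meeting every exceptional class and hence still exceptional, contradicting minimality. Thus a minimal exceptional set contains exactly one element from each exceptional class. I expect the main obstacle to be the class-by-class extension in the converse: one must verify carefully that the local bijections on the (possibly infinitely many) size-$k$ classes and on the exceptional classes assemble into a single global bijection of $A$ that is the identity on $S$, agrees with the given partial map, and preserves $E$ throughout.
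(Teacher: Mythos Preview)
Your proposal is correct and follows essentially the same route as the paper's proof: the forward direction by contraposition, picking two classes of different sizes disjoint from the candidate set $S$ and observing that $x\mapsto y$ cannot extend; the reverse direction by taking one representative from each exceptional class and extending a given partial isomorphism class-by-class to an automorphism. Your explicit characterization of exceptional sets as precisely those finite sets meeting every exceptional class is a small but pleasant sharpening over the paper, which only records enough to conclude the minimality statement; the underlying arguments are identical.

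One point worth flagging, since you yourself identify it as the main obstacle: your description of the extension on the size-$k$ classes (``used source classes correspond bijectively to used target classes and the remaining size-$k$ classes are paired off arbitrarily'') glosses over the fact that a single size-$k$ class may be simultaneously a source and a target for different indices, so the induced partial map on classes is not simply a matching but a partial permutation whose orbits may be chains. The paper handles this with an explicit cycle/chain decomposition (closing each chain by sending a class that contains some $y_i$ but no $x_j$ to one that contains some $x_j$ but no $y_i$, there being equally many of each). Your sketch is repairable along exactly these lines, but as written it is slightly looser than the paper's already somewhat informal treatment.
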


\begin{proof}
For one direction, suppose that $\A$ has infinitely many classes of
  different sizes and let $\{a_1,\ldots,a_n\}$ be a finite subset.
  Then find elements $x,y$ from classes of different sizes so neither
  is related to any $a_i$.  Then
  $\la\vec{a},x\ra\cong\la\vec{a},y\ra$, but this can't extend to an
  automorphism.  \\ \\ Now for the reverse implication. If all but finitely many
  equivalence classes of $\A$ are of the same size, let
  $\{a_1,\ldots,a_n\}$ contain exactly one element from each of these
  exceptional classes.  Then suppose
  $\la\vec{a},\vec{x}\ra\cong\la\vec{a},\vec{y}\ra$ via the
  isomorphism $\varphi$ so $\varphi(x_i)= y_i$ and $\varphi(a_k)=a_k$.
  Either each pair $x_i,y_i$ is in an equivalence class with some
  $a_k$, or if not they are in classes of the same size.  Then the
  isomorphism extends to an automorphism as follows. 

First, we will explain how the equivalence classes are mapped, and
then we will describe what happens to the elements of each class. 
Fix each exceptional class as well as any classes which do not
contain any $x_i$ or $y_i$.  If a nonexceptional class has an $x_i$,
then map $x_i$ to $y_i$ and hence the class $[x_i]$ to the
class $[y_i]$. If there are nonexceptional classes with a $y_i$ but no
$x_i$, there must be the same number of nonexceptional classes with an
$x_i$ but no $y_i$.  Send each class of the first kind to one of the
second kind. Thus we may end up with cycles, say $[x_1]$ maps to $[y_1]$
and $y_1 E x_3$, so that $[y_1]$ maps to $[y_3]$, and then $[y_3]$ maps to $[x_1]$.

Within each class, do the following. For classes with no $x_i$ or
$y_i$, the class is fixed and we also fix each element of the
class. For the nonexceptional classes containing some of the $x_i$ or
$y_i$, we have mapped the $x_i$ and $y_i$ above, and the remaining
elements can be mapped arbitrarily. For the exceptional classes
containing some of the $x_i$ or $y_i$, it follows that $\varphi(x_i) E
x_i$, thus we can map those elements respecting $\varphi$ using a
cycle decomposition similar to that described above for the
nonexceptional classes. Now the remaining elements can simply be
fixed.

The claim about the minimal exceptional sets follows since the proof
shows such a set is exceptional, and that a finite set disjoint from
two classes of different sizes is not exceptional.
\end{proof}

\begin{corollary} A computable equivalence structure $\A$ is 
weakly ultrahomogeneous iff $\A$ is computably categorical. 
\end{corollary}

\begin{corollary} For any countable weakly homogeneous equivalence
  structure $\A$, 
there is a computable structure isomorphic to $\A$.
\end{corollary}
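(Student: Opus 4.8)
The plan is to use the characterization theorem just proved to replace the (possibly non-effective) structure $\A$ by a finite list of parameters, and then to build a computable copy by an explicit, hard-coded layout of the equivalence classes. By the theorem, $\A$ is weakly ultrahomogeneous iff all but finitely many of its classes share one common size $k \in \{1,2,\dots\}\cup\{\aleph_0\}$. If $\A$ is finite it is trivially computable, so assume $\A$ is infinite. Then up to isomorphism $\A$ is determined by finitely many pieces of data: the sizes $n_1,\dots,n_m \in \{1,2,\dots\}\cup\{\aleph_0\}$ of the finitely many exceptional classes, the common size $k$, and the number $N \in \{0,1,2,\dots\}\cup\{\aleph_0\}$ of classes of size $k$. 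Since $\A$ is infinite, at least one of the following must hold: some $n_i=\aleph_0$, or $k=\aleph_0$ with $N\ge 1$, or $N=\aleph_0$. In every case the defining data is finite, treating each occurrence of $\aleph_0$ as a single symbol.

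First I would fix a computable pairing function $\langle\cdot,\cdot\rangle:\N\times\N\to\N$ and partition the universe $\omega$ of the copy $\B$ into disjoint computable blocks, one block per class of $\A$. The finitely many exceptional classes are assigned named initial blocks: a finite exceptional class of size $n_i$ receives a block of $n_i$ consecutive integers, and an infinite exceptional class receives one computable column $\{\langle i,j\rangle : j\in\N\}$. The non-exceptional classes of size $k$ are then laid out in the remaining integers: if $k$ is finite I would use consecutive blocks of length $k$ (either $N$ many if $N$ is finite, or cofinitely many blocks of length $k$ if $N=\aleph_0$), and if $k=\aleph_0$ I would use columns $\{\langle i,j\rangle : j\in\N\}$ for the appropriate range of column indices $i$. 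Defining $a\mathrel{E}b$ to hold exactly when $a$ and $b$ fall in the same block then yields a relation on $\omega$ whose classes realize precisely the prescribed character, so that $\B\cong\A$.

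It remains to check that $E$ is computable, and this is the only point requiring care: given $a,b\in\omega$, one must decide membership of $a$ and $b$ in the finitely many named exceptional blocks and, failing that, compute which common-size block each lands in. Because the layout is specified by an explicit finite case analysis together with the computable pairing function and elementary arithmetic on the block lengths, each element's block is computed uniformly, so $E$ is decidable and $\B$ is a computable structure. I do not expect a genuine obstacle here: the content of the result is carried entirely by the characterization theorem, which compresses $\A$ into finite data, after which the construction is routine bookkeeping. The one case worth flagging explicitly is $k=\aleph_0$ (or an infinite exceptional class), where one must produce infinitely many infinite classes; the column decomposition $\omega=\bigcup_i\{\langle i,j\rangle : j\in\N\}$ handles this while keeping $E$ computable.
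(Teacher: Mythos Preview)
Your approach is correct and is exactly the intended one: the paper states this corollary without proof, since it follows immediately from the characterization theorem that a weakly ultrahomogeneous equivalence structure is determined up to isomorphism by a finite list of cardinals, from which a computable copy is obviously constructible. Your explicit layout supplies more detail than the paper does; the only minor slip is that assigning some exceptional classes blocks of consecutive integers while assigning others pairing-function columns would cause overlaps as written, but this is trivially repaired (for instance, place every class in its own column $\{\langle i,j\rangle: j<|C_i|\}$ and then compose with a computable bijection onto $\omega$).
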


With this characterization of weakly ultrahomogeneous structures and
their minimal exceptional sets, we can again investigate their
uniqueness properties.  Given two minimal exceptional sets, there is
an automorphism of the structure sending one to the other by
interchanging the two elements in each exceptional class and fixing
everything else.  However, as opposed to linear orders we don't have
uniqueness of the definable closures.

\begin{proposition} \label{prop3} Let $\A=(A,E)$ be a weakly ultrahomogeneous 
equivalence structure and let $S=\{a_1,\ldots,a_n\}$ a minimal
exceptional set. Then $x\in A$ is definable from $S$ iff $x\in S$ or
$x$ is in an exceptional class of size at most 2. 
\end{proposition}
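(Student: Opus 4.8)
The plan is to use the criterion recalled just before the proposition: an element fails to be definable from $S$ as soon as some automorphism of $\A$ fixing every point of $S$ moves it, whereas definability is witnessed by a first-order formula with parameters in $S$ isolating the element. I will lean on two elementary facts about automorphisms of an equivalence structure: (a) any permutation of the elements of a single $E$-class extends, by the identity off that class, to an automorphism; and (b) any bijection between two distinct $E$-classes of the same size extends, by the identity elsewhere, to an automorphism. I also use the characterization theorem above: writing $k$ for the size shared by all but finitely many classes, the exceptional classes are exactly those of size $\ne k$, the minimal set $S$ meets each exceptional class in exactly one point and is disjoint from every non-exceptional class, and there are infinitely many classes of size $k$.

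For the direction from right to left (definability), first, if $x \in S$ then $x$ is defined over $S$ by $y = a_i$. If $x$ lies in an exceptional class of size $1$, that class is the singleton $\{a_i\}$ for some $a_i \in S$, so $x = a_i \in S$ and we are reduced to the previous case. If $x$ lies in an exceptional class $C$ of size $2$, then $C = \{a_i, x\}$ with $a_i \in S$ the unique $S$-point of $C$; since $|C| = 2$, the element $x$ is the \emph{unique} solution of $y \mathrel{E} a_i \wedge y \ne a_i$, and hence $x \in D(S)$.

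For the direction from left to right I argue by contraposition: assuming $x \notin S$ and that $x$ is not in an exceptional class of size $\le 2$, I exhibit an automorphism fixing $S$ pointwise that moves $x$, which by the criterion gives $x \notin D(S)$. There are two cases. If $x$ lies in a non-exceptional class $C$ (so $|C| = k$ and, as noted, $C \cap S = \emptyset$): when $k \ge 2$, fact (a) lets me transpose $x$ with another element of $C$; when $k = 1$, the class $C = \{x\}$ is swapped via fact (b) with another singleton non-exceptional class $\{x'\} \ne \{x\}$, which exists because infinitely many classes have the common size $1$. If instead $x$ lies in an exceptional class $C$ with $|C| \ge 3$, then $C \cap S = \{a_i\}$; since $|C \setminus \{a_i\}| \ge 2$ I may pick $x' \in C \setminus \{a_i, x\}$ and transpose $x$ with $x'$ by fact (a), fixing $a_i$ and everything outside $C$. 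In each case the resulting automorphism fixes all of $S$ and moves $x$.

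The one point demanding care is the bookkeeping in the left-to-right direction: verifying in each subcase that the permutation chosen really fixes \emph{all} of $S$, which is exactly where the facts that $S$ avoids the non-exceptional classes and meets each exceptional class in a single point are used, and isolating the lone subcase ($k = 1$) in which one must invoke the infinitude of classes of the common size rather than permuting within a single class. The right-to-left direction is then routine once one notices that a size-$2$ exceptional class is pinned down by its unique member of $S$.
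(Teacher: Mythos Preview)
Your proof is correct and follows essentially the same approach as the paper's. The only minor variation is in the non-exceptional case: the paper uniformly interchanges $[x]$ with another class of the same size, whereas you transpose within $[x]$ when $k\ge 2$ and only swap classes when $k=1$; both work, and your version makes the reliance on the infinitude of non-exceptional classes explicit in the one subcase where it is actually needed.
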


\begin{proof} If $x\in A$ isn't in an exceptional class, there is an 
automorphism fixing all the exceptional classes but moving $x$ by
interchanging $[x]$ with another class of the same size.  If $x$ is in
an exceptional class of size at most 2, then for some $i\leq n$,
either $x = a_i$ or $\{x\} = \{y: yEa_i\ \&\ y \neq a_i\}$.  If $x$ is
in an exceptional class of size greater than 2, then $[x]$ contains
$a_i$ for some $i\leq n$ and an element $y$ distinct from $x$ and
$a_i$.  In this case, switching $x$ and $y$ and using the identity
everywhere else is an automorphism of $\A$ moving $x$ and fixing $S$.
\end{proof}

Now we consider index sets for equivalence structures.  
The $e$th computable equivalence structure $\A_e = (\omega, \equiv_e)$ is given by the
$e$th partial recursive function $\phi_e$ when $\phi_e$ is total and is 
the characteristic function of an equivalence relation. It is easy to see that 
$EQ= \{e: \A_e \ \text{is an equivalence structure}\}$ 
is a $\Pi^0_2$ set.  Let $UHQ = \{e: \A_e\ \text{is an ultrahomogeneous equivalence structure}\}$ and let  
$WUQ= \{e: \A_e\ \text{is a weakly ultrahomogeneous equivalence structure}\}$.

\begin{theorem}
\begin{enumerate}
\item[(a)] The index set $UHQ$ is $\Pi^0_2$ complete,
and in fact $\Pi^0_2$ complete relative to $EQ$. 
\item[(b)] The index set $WUQ$ is $\Sigma^0_3$ complete,
and in fact $\Sigma^0_3$ complete relative to $EQ$. 
\end{enumerate}
\end{theorem}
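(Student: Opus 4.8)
The plan is to mirror the treatment of $UHL$ and $WUL$ given above for linear orders, substituting the equivalence-structure characterizations proved earlier: $\A_e$ is ultrahomogeneous exactly when all its classes share a single size $k$ with $1 \le k \le \aleph_0$, and $\A_e$ is weakly ultrahomogeneous exactly when all but finitely many classes share a single size. I would first dispose of the upper bounds. For (a), the complement of ``all classes have equal size'' asserts $\exists x,y\,(|[x]| \neq |[y]|)$, and since (once $\phi_e$ is known total) ``$|[x]| \neq |[y]|$'' can be written as $\exists k$ stating that one class has exactly $k$ elements while the other has more, which is $\Sigma^0_2$, the whole negation is $\Sigma^0_2$; intersecting with the $\Pi^0_2$ set $EQ$ places $UHQ$ in $\Pi^0_2$. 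For (b), weak ultrahomogeneity is $\exists k$ (only finitely many classes have size $\neq k$); for each fixed $k$ the set of least class-representatives of wrong-sized classes has $\Delta^0_2$ membership, and asserting its finiteness is $\Sigma^0_3$, so $WUQ \in \Sigma^0_3$.

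For the hardness in (a), I would reduce the $\Pi^0_2$-complete set $INF$ to $UHQ$ by a computable $f$ with $f(e) \in EQ$ for every $e$. Build $\A_{f(e)}$ in stages: at each stage add a fresh element as a new singleton class, and whenever some element enters $W_e$ use a fresh element to promote the oldest singleton to a class of size $2$, processing singletons in first-in-first-out order. If $W_e$ is infinite every singleton is eventually promoted, so all classes have size $2$ and $\A_{f(e)}$ is ultrahomogeneous; if $W_e$ is finite, only finitely many promotions occur, leaving infinitely many singletons together with finitely many size-$2$ classes, so $\A_{f(e)}$ is not ultrahomogeneous.

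For the hardness in (b), I would reduce the $\Sigma^0_3$-complete set $COF$ to $WUQ$ by a computable $g$ with $g(e) \in EQ$ always. Reserve one class $C_n$ for each $n$, initialized with $n+1$ distinct elements, and at stage $s$ adjoin one fresh element to each $C_n$ with $n \in W_{e,s}$. In the limit $C_n$ is infinite if $n \in W_e$ and has exactly $n+1$ elements otherwise. If $W_e$ is cofinite, all but finitely many $C_n$ are infinite, so $\A_{g(e)}$ is weakly ultrahomogeneous; if $W_e$ is not cofinite, then the infinitely many $n \notin W_e$ give finite classes of pairwise distinct sizes, so no single size $k$ is shared by all but finitely many classes and $\A_{g(e)}$ is not weakly ultrahomogeneous. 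As before, these reductions having range inside $EQ$ yield the ``relative to $EQ$'' strengthenings.

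The step I expect to be the main obstacle is the correctness of the construction in (b), precisely in the non-cofinite case. The temptation is to give every ``exceptional'' class ($n \notin W_e$) a common fixed size, but then a \emph{finite} $W_e$ would produce infinitely many exceptional classes all of one size and the structure would be (wrongly) weakly ultrahomogeneous. Assigning the pairwise-distinct sizes $n+1$ is the device that rules out every candidate common size $k$ at once whenever $\overline{W_e}$ is infinite, and verifying this uniformly over all non-cofinite $W_e$ (finite and coinfinite alike) is the part requiring care.
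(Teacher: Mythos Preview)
Your approach is essentially right and parallels the paper's, but part (a) has a small genuine gap, and part (b) uses a different (also correct) construction.

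\textbf{The gap in (a).} Your reduction fails at $W_e=\emptyset$: no promotions ever occur, so $\A_{f(e)}$ consists entirely of singleton classes, which \emph{is} ultrahomogeneous, yet $e\notin INF$. Your sentence ``leaving infinitely many singletons together with finitely many size-$2$ classes, so $\A_{f(e)}$ is not ultrahomogeneous'' is false precisely when that finite number is zero. The paper avoids this by seeding the construction with a built-in size-$2$ class at stage $0$ (it takes $\A^0=\{0,1,2\}$ with $0\equiv 1$), so that a finite $W_e$ always leaves at least one size-$2$ class alongside the infinitely many singletons. The same one-line fix repairs your argument; everything else in (a) matches the paper.

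\textbf{The different route in (b).} The paper's reduction uses only sizes $1$ and $2$: it arranges that $[2n]$ has size $2$ iff $n\in W_e$, while every odd number is placed in a size-$2$ class. Thus cofinitely many classes have size $2$ exactly when $W_e$ is cofinite, and otherwise there are infinitely many classes of each of the two sizes. Your construction instead uses infinite classes for $n\in W_e$ and pairwise distinct finite sizes $n+1$ for $n\notin W_e$. Both are correct. The paper's version keeps the bookkeeping to two sizes and yields a structure of bounded character; yours makes the failure in the non-cofinite case especially transparent, since infinitely many distinct finite sizes defeat every candidate $k$ (finite or $\aleph_0$) at once. The pitfall you flag as the ``main obstacle''---that giving all exceptional classes one fixed size would wrongly succeed when $W_e$ is finite---is exactly what the paper sidesteps by forcing infinitely many size-$2$ classes via the odd indices, independent of $W_e$.
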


\begin{proof} (a) It is easy to see that $UHQ$ is a $\Pi^0_2$ set. For the completeness, we give a reduction $f$ of the $\Pi^0_2$ complete set $INF$ to $UHQ$ in such a way that $f(e) \in EQ$ for every $e$. The idea of the construction is to make every equivalence class have size two if $W_e$ is infinite, and otherwise to have finitely many classes of size two and the rest of size one.  The construction of $\A_{f(e)}$ is in stages $\A^s$ as follows (we suppress the subscripts for ease of comprehension).  $\A^0 = \{0,1,2\}$ with $0 \equiv 1$. This ensures that there will always be at least one class of size two.  After any stage $s$, we will have an equivalence structure $\A^s$ with finite universe $A_s = \{0,1,\dots,2s+2\}$ with $n +1 \leq s+1$ classes of size two, where $n = card(W_{e,s})$, and at least one class of size one; in particular $2s+2$ will make up a class of size one. We assume that, for all $e$ and $s$, 
$W_{e,s} \subseteq \{0,1,\dots,s-1\}$ and that at most one element comes into $W_e$ at any stage $s+1$. There are two cases in the construction at stage $s+1$.  First, suppose that  a new element comes into $W_e$ at stage $s+1$, which is the $n+1$st element of $W_e$.  In this case, we look for the least $i \leq 2s$ not in a class of size two and make $2s+3 \equiv i$, so that $2s+4$ is in a class by itself.  Next, suppose that no new element comes into $W_e$ at stage $s+1$.  Then we simply let $A^{s+1} = \A_s \cup \{2s+3,2s+4\}$ without adding any pairs to the equivalence relation. 

If $W_e$ is infinite, then it follows from the construction that every equivalence class of $\A_{f(e)}$ will have size two, so that $\A_{f(e)}$ is ultrahomogeneous.  If $W_e$ is finite, then all but finitely many classes will have size one, but there will be at least one class of size two, so that $\A_{f(e)}$ is not ultrahomogeneous. 

(b) It is easy to see, by quantifying over the finite exceptional set, that $WUQ$ is a $\Sigma^0_3$ set. For the completeness, we give a reduction $g$ of the $\Sigma^0_3$ complete set $COF$ to $WUQ$ in such a way that $g(e) \in EQ$ for every $e$. 

The construction of $\A_{f(e)}$ is a modification of the construction in part (a). The difference is that we make $2n$ have a class of size $2$ if and only if $n \in W_e$ while each odd number has a class of size 2. At stage 0, we have $\A^0 = \emptyset$. After stage $s$, we have $\A^s = \{0,1,\dots,2s-1\}$ so that 
\begin{enumerate}
\item[(i)] for all $a< 2s$, $[a]^s$ has size either one or two;
\item[(ii)] for all $n < s$, $[2n]^s = \{2n\}$ if and only if $n \notin W_{e,s}$; 
\end{enumerate}
At stage $s+1$, there are two cases.
\smallskip

Case I: If there is an $n\leq s$ such that $n\in W_{e,s+1}\setminus W_{e,s}$, make $2n$ equivalent to the least odd number $2m+1\leq 2s+1$ not in a class of size two. If $m<s$, put $2s+1$ in a class of size 1 and if $n<s$, put $2s$ in a class of size one.
\smallskip

Case II: If $W_{e,s+1}=W_{e,s}$, put $2s$ in a class of size one. If there is an odd number less than $2s+1$ in a class of size one, pair that number with $2s+1$. If not, put $2s+1$ in a class of size one.

%
%
%
%

It follows from the construction that, for each $n$, $[2n]$ has size two if and only if $n \in W_e$ and that $[2n+1]$ has size two for every $n$. 
If $W_e$ is cofinite, then all but finitely many classes have size two, and hence $\A_{f(e)}$ is weakly ultrahomogeneous.  If $W_e$ is not cofinite, then $\A_{f(e)}$ has infinitely many classes of size one and infinitely many classes of size two, and hence is not weakly ultrahomogeneous.
\end{proof}

\section{Injection Structures} 

The effective categoricity of injection structures was studied by
Cenzer, Harizanov and Remmel in \cite{CHR14}. It was shown that an
injection structure is computably categorical if and only if it has
finitely many infinite orbits, and is $\Delta^0_2$ categorical if and
only if it either has only finitely many orbits of type $\Z$ or has
only finitely many orbits of type $\omega$. In each case, relative categoricity 
is the same as categoricity. 

\begin{proposition} An injection structure is ultrahomogeneous 
if and only if it has no $\omega$-orbits. 
\end{proposition}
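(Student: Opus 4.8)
The plan is to prove both directions using the orbit decomposition of injection structures as the organizing principle, where an orbit of $\A=(A,f)$ is a finite cycle, an $\omega$-orbit, or a $\Z$-orbit. Throughout I use the fact that an automorphism $\sigma$ of $\A$ must preserve the range of $f$: since $\sigma$ commutes with $f$, we have $x=f(y) \Iff \sigma(x)=f(\sigma(y))$, so an element lies in the range of $f$ exactly when its image does. The initial element of an $\omega$-orbit is precisely the unique element of that orbit not in the range of $f$, so this is the key automorphism-invariant for the argument.

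For the forward direction I argue by contraposition. Assuming $\A$ has an $\omega$-orbit $a_0,\ a_1=f(a_0),\ a_2=f(a_1),\ldots$, I exhibit an isomorphism of finitely generated substructures that does not extend to an automorphism. Since the language contains only $f$ (no inverse), $\la a_0 \ra=\{a_0,a_1,a_2,\ldots\}$ and $\la a_1 \ra=\{a_1,a_2,\ldots\}$, and the restriction of $f$ is an isomorphism $\varphi:\la a_0\ra\to\la a_1\ra$ sending $a_i\mapsto a_{i+1}$. Any extending automorphism would send $a_0\mapsto a_1$; but $a_0$ is not in the range of $f$ while $a_1=f(a_0)$ is, contradicting the invariant above. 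Hence $\A$ is not ultrahomogeneous.

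For the converse I assume $\A$ has no $\omega$-orbits, so every orbit is a finite cycle or a $\Z$-orbit, and I take an arbitrary isomorphism $\varphi:X\to Y$ between finitely generated substructures and build an extending automorphism $\sigma$ orbit by orbit. First I record the shape of a finitely generated substructure: within a finite cycle $X$ meets the whole cycle (the forward closure of any point closes up), while within a $\Z$-orbit $X$ meets a forward ray $\{f^n(a):n\geq 0\}$ from its leftmost generator, a proper subset isomorphic to $\omega$. Because $\varphi$ preserves orbit type, it matches the cycles met by $X$ bijectively with equally many cycles of the same size met by $Y$, and likewise matches the $\Z$-orbits met by $X$ with those met by $Y$. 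On each cycle met by $X$ the map $\varphi$ is already a full isomorphism onto its image cycle; on each $\Z$-orbit met by $X$ I extend $\varphi$ backward by $f^{-n}(a)\mapsto f^{-n}(\varphi(a))$, which is exactly where having a $\Z$-orbit rather than an $\omega$-orbit is used, since both rays then have genuine predecessors. Finally, for the orbits untouched by $X$: for each cycle size, and for the $\Z$-orbits, the collections of orbits not met by $X$ and not met by $Y$ have the same cardinality (the touched counts agree since $\varphi$ is a bijection), so I pick any bijection between them and any isomorphism on each matched pair.

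The verifications that $\sigma$ is then a well-defined automorphism and that $\sigma\restr X=\varphi$ are routine. The main obstacle lies in the converse bookkeeping: one must confirm that a finitely generated substructure meets each $\Z$-orbit in a forward ray rather than the whole orbit, and that the untouched orbits on each side can be matched, which rests on the agreement of the touched-orbit counts of each type. Identifying range membership as the invariant that blocks extension in the $\omega$-orbit case is the crux of the forward direction.
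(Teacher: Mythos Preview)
Your proof is correct and follows essentially the same approach as the paper's: both argue the forward direction via the initial element of an $\omega$-orbit (not in the range of $f$) being sent to an element that is, and both handle the converse by extending $\varphi$ backward along $\Z$-orbits using $f^{-n}$ and patching in the remaining orbits. Your bookkeeping on the untouched orbits (matching by cardinality within each orbit type) is in fact more careful than the paper's, which simply uses the identity on orbits not meeting the $a_i$'s and glosses over orbits meeting $Y$ but not $X$.
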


\begin{proof}
Let $\A = (\omega,f)$ be an injection structure. Suppose first that $\A$ has an $\omega$-orbit and let $a$ be 
the element of this orbit with no predecessor.  Then the substructures $\A = \langle a\rangle = \{f^n(a): n \in \omega\}$ and $\langle f(a)\rangle = \{f^{n+1}(a): n \in \omega\}$
are isomorphic but this clearly cannot be extended to an automorphism of $\A$.  

For the other direction, suppose that $\A$ has no $\omega$-orbits and let $\phi$ be an isomorphism mapping $\langle a_1,\dots,a_n\rangle$ to $\langle b_1,\dots,b_n\rangle$. 
If $a_i$ belongs to an orbit of finite size $k$, then the orbit of $b_i$ also of size $k$ and $\phi$ maps $\langle a_i\rangle$ to $\langle b_i\rangle$.  If $a_i$ belongs to an orbit of type $\Z$, then so does $b_i$ and $\phi$ maps $f^j(a_i)$ to $f^j(b_i)$ for each $j$. This isomorphism can be extended to the entire orbit of $a_i$ by mapping $f^{-n}(a_i)$ to $f^{-n}(b_i)$.  Finally, this can be extended to an automorphism of $\A$ by mapping $a$ to $a$ for elements of any orbits not among the orbits of $a_1,\dots,a_n$. 
\end{proof}

Next we consider weakly ultrahomogeneous injection structures. 

\begin{proposition} \label{prop6} A countable injection structure $\A$ is weakly 
ultrahomogeneous iff it has finitely many $\omega$-orbits. In this
case, a minimal exceptional set contains exactly one member from each
$\omega$-orbit.
\end{proposition}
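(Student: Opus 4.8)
The plan is to prove both directions of the biconditional, and then verify the claim about minimal exceptional sets.

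For the forward direction, I would argue the contrapositive: if $\A$ has infinitely many $\omega$-orbits, then no finite set $\{a_1,\dots,a_n\}$ can be exceptional. Given such a finite set, I would choose an $\omega$-orbit $O$ containing none of the $a_i$ (possible since the $a_i$ meet only finitely many orbits). Let $x$ be the element of $O$ with no predecessor and let $y = f(x)$. Since neither $x$ nor $y$ lies in the substructure generated by $\vec{a}$, and since $\langle \vec a, x\rangle$ and $\langle \vec a, y\rangle$ are isomorphic (both consist of the $\vec a$-part together with a ray $\{f^n(x)\}$ resp.\ $\{f^{n+1}(x)\}$ disjoint from it, as in the ultrahomogeneity proposition above), the map fixing each $a_i$ and sending $x \to y$ is an isomorphism of these substructures. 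But it cannot extend to an automorphism, because $x$ has no predecessor in $\A$ while $y = f(x)$ does. This mirrors the argument in the preceding proposition exactly.

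For the reverse direction, suppose $\A$ has only finitely many $\omega$-orbits $O_1,\dots,O_m$, and let $S = \{a_1,\dots,a_m\}$ where $a_j$ is the unique element of $O_j$ having no predecessor. I claim $S$ is exceptional. Suppose $\la \vec a,\vec x\ra \cong \la \vec a,\vec y\ra$ via $\varphi$ fixing each $a_j$ and sending $x_i \to y_i$. The key observation is that, because each $a_j$ pins down its $\omega$-orbit, any $x_i$ lying in some $O_j$ satisfies $x_i = f^k(a_j)$ for a determined $k$, forcing $y_i = f^k(a_j) = x_i$; so $\varphi$ is the identity on elements of the $\omega$-orbits. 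For $x_i, y_i$ lying in finite or $\Z$-orbits, the situation is exactly the ultrahomogeneous case, so $\varphi$ matches orbit types and I can extend orbit-by-orbit as in the proof of the previous proposition (mapping $\langle x_i\rangle$ to $\langle y_i\rangle$ and the identity on untouched orbits). Assembling these pieces gives an automorphism of $\A$ extending $\varphi$.

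Finally, for the minimal exceptional set claim, the reverse direction already exhibits an exceptional set containing exactly one member of each $\omega$-orbit, and the forward direction shows any exceptional set must meet every $\omega$-orbit (otherwise the $x,y$ argument applies to a missed orbit). So no exceptional set can omit an $\omega$-orbit, and a single well-chosen element per $\omega$-orbit suffices; minimality follows because dropping the representative of any $\omega$-orbit leaves that orbit unpinned and recreates the $x \to f(x)$ counterexample. I expect the main subtlety to be verifying that one representative per $\omega$-orbit genuinely rigidifies the whole orbit under the isomorphism—that is, confirming that $\la \vec a,\vec x\ra \cong \la \vec a, \vec y\ra$ forces $\varphi$ to fix the orbit pointwise rather than merely preserving its type—since the exceptional element's lack of a predecessor is what eliminates the shift automorphism that caused the failure in the non-weakly-ultrahomogeneous case.
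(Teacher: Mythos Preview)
Your approach matches the paper's: the same contrapositive for the forward direction (pick an $\omega$-orbit missing the candidate exceptional set and use $x\mapsto f(x)$), and the same orbit-by-orbit extension for the reverse direction.

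One small refinement is needed for the minimal exceptional set claim. You take each $a_j$ to be the \emph{initial} element of $O_j$ and suggest that its lack of a predecessor is what rigidifies the orbit. In fact any element of $O_j$ works: if $\varphi$ fixes $a_j$ and respects $f$, then forward iterates $f^k(a_j)$ are fixed, and injectivity of $f$ forces the finitely many predecessors of $a_j$ to be fixed as well. This generality matters because the proposition asserts that \emph{every} minimal exceptional set has exactly one member per $\omega$-orbit. Your argument exhibits one such minimal set (the initial elements), but to rule out a minimal exceptional set with extra elements you must show that, given any exceptional $S$, selecting one representative per $\omega$-orbit from $S$ already yields an exceptional subset---and those representatives need not be initial elements. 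Once you note that an arbitrary representative rigidifies its $\omega$-orbit, this follows immediately.
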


\begin{proof}
Suppose that $\A$ is an injection structure having only finitely many
$\omega$-orbits.  Let $\{a_1,\ldots,a_n\}$ contain exactly one element
from each of the $\omega$-orbits, and assume that
$\la\vec{a},\vec{x}\ra\cong\la\vec{a},\vec{y}\ra$ via the isomorphism
$\varphi$. The isomorphism is extended to an automorphism as follows.

First, orbits not containing any $x_i$ or $y_i$ are fixed.  If $x_i$
is in a finite orbit of size $k$, then $y_i$ is also in a finite orbit
of size $k$, and the orbit of $x_i$ is mapped to the orbit of
$y_i$. If there are finite orbits of size $k$ containing some $y_j$
but no $x_i$, then there must be an equal number of orbits of size $k$
containing some $x_i$ but no $y_j$, and then we map each class of the
first kind to one of the second kind.

If $x_i$ is in a $\Z$-orbit, then $\varphi$ maps the sequence
  $(x_i,f(x_i),\dots)$ to the sequence $(y_i,f(y_i),\dots)$ and this can be
  extended to the entire orbits. 
 Each $\omega$-orbit must be fixed, since it contains one of the
  $a_i$, and $\varphi$ fixes $a_i$ and respects $f$. 

Now assume $\A$ has infinitely
many $\omega$-orbits, and let $\{a_1,\ldots,a_n\}$ be a finite set.
In an $\omega$-orbit containing none of the $a_i$, let $x_0$ be the
initial element and $x_1=f(x_0)$.  Then
$\la\vec{a},x_0\ra\cong\la\vec{a},x_1\ra$, but the isomorphism can't
extend since $x_1$ is in the range of $f$ while $x_0$ isn't.  Thus
$\A$ isn't weakly ultrahomogeneous.  

If a finite set $S$ doesn't
include an element from each $\omega$-orbit, we may repeat the above
argument with the orbit not intersecting $S$ to show the finite set
isn't exceptional.  The claim about minimal exceptional sets follows.
\end{proof}

It follows that, for computable injection structures, computable
categoricity implies weak ultrahomogeneity which implies
$\dt$ categoricity.  Neither implication can be reversed as witnessed
by computable injection structures consisting of only infinitely many
$\Z$-orbits, and of only infinitely many $\omega$-orbits,
respectively.

In contrast to the results for linear orders and equivalence structures, there exist 
ultrahomogeneous injection structures which are not isomorphic to computable structures. 

The character $K(\A)$ for an injection structure $\A = (\omega,f)$ is defined by
\[
K(\A) = \{(n,k): \A\ \text{has at least $n$ orbits of size}\ k\}.
\]
It is easy to see that $K$ will be the character of an injection structure if and only if it is a subset of $\omega \times (\omega - \{0\})$ 
such that, for all natural numbers $n$ and $k$, if $(n+1,k) \in K$ then $(n,k) \in K$. 

\begin{proposition} \begin{enumerate}
\item For any character $K$, there is an ultrahomogeneous injection structure $\A$ 
with character $K$ and with an arbitrary finite number of  orbits of type $\Z$.  Furthermore, $\A$ is 
relatively computably categorical. 
\item For any character $K$, there is a weakly  ultrahomogeneous injection structure $\A$ 
with character $K$ and with an arbitrary finite number of  orbits of type $\Z$ and an arbitrary finite number
of orbits of type $\omega$.  Furthermore, $\A$ is 
relatively computably categorical. 
\item For any character $K$, there is an ultrahomogeneous injection structure $\B$ 
with character $K$ and with an infinite number of  orbits of type $\Z$. 
 Furthermore, $\A$ is 
relatively $\Delta^0_2$ categorical. 
\item For any character $K$, there is a weakly ultrahomogeneous injection structure $\B$ 
with character $K$ and with an arbitrary number of  orbits of type $\Z$ and an arbitrary finite number of orbits of type $\omega$. 
 Furthermore, $\A$ is 
relatively $\Delta^0_2$ categorical. 
\end{enumerate}
\end{proposition}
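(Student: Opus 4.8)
The plan is to construct, for each of the four parts, an explicit injection structure with the prescribed character $K$ and the prescribed collection of infinite orbits, and then to verify categoricity by appealing to the effective categoricity results of Cenzer, Harizanov and Remmel quoted at the start of this section together with Proposition \ref{prop6}. The key observation is that the character $K$ only prescribes the multiset of \emph{finite} orbit sizes, and these finite orbits are ``harmless'' for categoricity: an injection structure with a given character and a specified finite or infinite number of $\Z$-orbits and $\omega$-orbits is determined up to isomorphism, so the only real content is (a) building a \emph{computable} copy with the desired orbit data, and (b) reading off the categoricity from the known characterizations.

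First I would address the realizability of each structure as a (computable) injection structure. Given any character $K$ satisfying the closure condition stated before the proposition (that $(n+1,k)\in K$ implies $(n,k)\in K$), I would lay out on $\omega$ a disjoint union of cycles realizing $K$ together with the requested number of $\Z$-orbits and $\omega$-orbits; the assignment of elements to orbits and the definition of $f$ can be made uniformly, and since $K$ is merely a subset of $\omega\times(\omega\setminus\{0\})$ we only need that the orbits can be packed onto $\omega$ in a computable fashion. For part (1) there are no $\omega$-orbits, so by the preceding Proposition the structure is ultrahomogeneous; since it has only finitely many infinite orbits (all of type $\Z$), the CHR result gives computable categoricity, and relative computable categoricity follows because for injection structures relative categoricity coincides with categoricity, as noted at the start of the section. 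Part (2) adds finitely many $\omega$-orbits: by Proposition \ref{prop6} finitely many $\omega$-orbits makes the structure weakly ultrahomogeneous, and finitely many infinite orbits of either type again yields computable (hence relative computable) categoricity.

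For parts (3) and (4) I would do the same construction but now allow infinitely many $\Z$-orbits. In part (3) there are still no $\omega$-orbits, so the structure $\B$ is ultrahomogeneous by the Proposition; since it has only finitely many $\omega$-orbits (namely none), the CHR characterization of $\Delta^0_2$ categoricity applies and gives relative $\Delta^0_2$ categoricity. In part (4) we allow an arbitrary (possibly infinite) number of $\Z$-orbits together with \emph{finitely many} $\omega$-orbits; finitely many $\omega$-orbits again gives weak ultrahomogeneity via Proposition \ref{prop6}, and having only finitely many $\omega$-orbits places us squarely in the $\Delta^0_2$-categorical case of the CHR dichotomy, so $\B$ is relatively $\Delta^0_2$ categorical.

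The main obstacle, such as it is, is purely bookkeeping: arranging the infinitely many finite cycles dictated by $K$ together with the chosen infinite orbits into a single \emph{computable} function $f$ on $\omega$, uniformly in a presentation of $K$. Since $K$ need not itself be computable, some care is required about what ``for any character $K$'' means here — presumably one fixes an abstract isomorphism type and builds \emph{some} structure of that type, so that computability of the copy is a property we arrange rather than inherit from $K$. Once the copy is laid down with the correct orbit census, every categoricity claim is an immediate consequence of Proposition \ref{prop6} (for weak ultrahomogeneity), the preceding Proposition (for ultrahomogeneity), and the quoted CHR dichotomy (for computable versus $\Delta^0_2$ categoricity), so no further argument is needed beyond the construction itself.
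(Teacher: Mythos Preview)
Your approach is essentially correct but differs from the paper's route, and your self-identified ``main obstacle'' is a red herring that you should drop.

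The structures in the proposition are not required to be computable: relative (computable or $\Delta^0_2$) categoricity is defined for arbitrary countable structures, with the isomorphism computed from the diagrams of both copies. Since an arbitrary character $K$ need not be computable, there may be no computable copy at all, so your plan to ``lay out on $\omega$ a disjoint union \dots\ in a computable fashion'' is impossible in general and also unnecessary. Existence of the abstract isomorphism type is trivial, and the paper simply says ``it is clear that such structures exist.''

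For the categoricity claims the paper does not invoke the CHR dichotomy as you do; it appeals instead to its own Theorem \ref{wmain}. For parts (3) and (4), Theorem \ref{wmain}(1) gives relative $\Delta^0_2$ categoricity of any weakly ultrahomogeneous structure immediately, with (weak) ultrahomogeneity supplied by the two preceding propositions. For part (2) the paper adds one extra step: identify the initial elements of the finitely many $\omega$-orbits and define the map on those orbits directly, after which the remaining back-and-forth is effective relative to the diagrams. Your route via the CHR characterization reaches the same conclusions, and since the CHR arguments relativize to arbitrary diagrams the appeal is legitimate --- but note that, as quoted at the start of the section, the CHR statement is for \emph{computable} structures, so you are tacitly using its relativized form. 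The paper's route has the virtue of being self-contained, deriving everything from Theorem \ref{wmain} rather than importing an external classification.
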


\begin{proof} In each case,  it is clear that such structures exist.  The effective categoricity follows from Theorem \ref{wmain} with an additional argument in the second case. That is, for each of the finitely many orbits of type $\omega$, we simply identify the initial elements of each orbit and use this to define the  mapping on the orbits of type $\omega$. 
\end{proof}

Again in contrast to linear orderings and equivalence structures, injection structures are not always locally finite.  
Thus it is possible for an ultrahomogeneous computable injection structure to fail to be computably homogeneous. 
Of course a structure with only finite orbits is locally finite, and therefore a computable injection structure with only finite orbits will be
computably homogeneous. A computable structure with finitely many infinite orbits will be weakly computably homogeneous. 

However, a structure with infinitely many $\Z$-orbits may or may not be computably homogeneous.  

\begin{proposition} There is a computably homogeneous injection structure consisting of infinitely many $\Z$-orbits. 
\end{proposition}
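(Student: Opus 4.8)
The plan is to exhibit an explicit computable copy in which the back-and-forth witnesses demanded by the definition of computable homogeneity can be read off effectively. Fix a computable bijection $\pi \colon \N \to \N \times \Z$ with computable inverse, and define the injection structure $\A = (\N, f)$ by $f(m) = \pi^{-1}(n, k+1)$ whenever $\pi(m) = (n,k)$. Since $\pi$ and $\pi^{-1}$ are computable, $f$ is a computable injection, and each set $O_n = \{\pi^{-1}(n,k) : k \in \Z\}$ is a $\Z$-orbit, so $\A$ consists of infinitely many $\Z$-orbits. The crucial effective feature is that from any $m$ we may compute the orbit index $n$ and the position $k$ of $m$; hence we can decide whether two elements lie in the same orbit, and when they do, compute the unique integer $j$ (possibly negative) with $f^j(a) = b$, and we can compute $f^j(a)$ for every integer $j$.

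Next I describe the required computable process. Given a tuple $\ov{a} = (a_1,\dots,a_m)$, a map $g$ with values $g(a_i)$, and an element $x$, proceed as follows. First decide, for each $i \leq m$, whether $x$ lies in the same orbit as $g(a_i)$. If this holds for the least such $i$, compute the integer $j$ with $x = f^j(g(a_i))$ and set $y = f^j(a_i)$; otherwise, when $x$ shares no orbit with any $g(a_i)$, let $n^*$ be the least orbit index different from the orbit indices of $a_1,\dots,a_m$ and set $y = \pi^{-1}(n^*, 0)$. In either case output $y$ and $\ov{b} = \ov{a}\fr\langle y\rangle$. Every step is computable by the remarks above, and clearly $\A_{\ov a} \subseteq \A_{\ov b}$.

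It remains to verify correctness: whenever $g$ extends to an embedding of $\A_{\ov a}$ into $\A$, the map $h$ with $h(x) = y$ and $h(g(a_i)) = a_i$ extends to an embedding of $\A_{x, g(\ov a)}$ into $\A_{\ov b}$. Extend $h$ along the forward orbits of the generators by $h(f^t(z)) = f^t(h(z))$. In the same-orbit case, $x = f^j(g(a_i))$ forces $y = f^j(a_i)$ to bear the identical relationship to $a_i$ that $x$ bears to $g(a_i)$, so the forward orbit of $x$ maps isomorphically onto the forward orbit $\langle y\rangle \subseteq \A_{\ov b}$; when $j < 0$ this orbit reaches back past $a_i$, which is precisely why $y$, rather than $a_i$, is adjoined to $\ov b$. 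In the fresh case, $\langle x\rangle$ is disjoint from $\A_{g(\ov a)}$ and maps isomorphically onto $\langle y\rangle$, which is disjoint from $\A_{\ov a}$ by the choice of $n^*$. Since $g$ is an embedding, $h$ restricted to $\A_{g(\ov a)}$ is the inverse isomorphism onto $\A_{\ov a}$, and the two pieces are compatible, so $h$ is a well-defined embedding into $\A_{\ov b}$.

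The main obstacle is exactly this well-definedness and embedding check in the case analysis: one must confirm that the integer offsets are used consistently and, above all, that the adjoined generator $y$ generates enough of its orbit to contain the whole image of $\langle x\rangle$, including the case where $x$ sits below $g(a_i)$ in their common orbit and $y$ must therefore be chosen below $a_i$. The effective computability of positions within each $\Z$-orbit is what makes all of these determinations uniform and computable, and hence makes $\A$ computably homogeneous.
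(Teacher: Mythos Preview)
Your proof is correct and follows essentially the same approach as the paper: build a computable copy of $\bigsqcup_{\omega}\Z$ in which each element's orbit index and position within its orbit are computably readable, so that $f^{j}$ is computable for all integers $j$ and the back-and-forth witness $y$ can be produced effectively. The paper uses an explicit ad hoc encoding (a single $\Z$-chain on the odd numbers, then multiplied by powers of $2$) and simply asserts the resulting structure is ``clearly computably homogeneous''; you instead pull back along a generic computable bijection $\pi:\N\to\N\times\Z$ and actually spell out the computable process and its correctness, which is more than the paper does.
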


\begin{proof}  Consider the $\Z$-chain with universe $\{2i+1: i \in \omega\}$ and function $f$ defined so $f(4i+1) = 4i+5$, $f(4i+7) = 4i+3$, and $f(3) = 1$.
Now, for each $n$, simply multiply each element of this chain by $2^n$ to create infinitely many $\Z$-orbits. This structure is clearly computably homogeneous. 
\end{proof}

\begin{proposition} \label{proporb} There is a computable injection structure $\A$, consisting of infinitely many $\Z$-orbits, such that, for each $e$, 
there are infinitely many orbits which are Turing equivalent to $W_e$.  
\end{proposition}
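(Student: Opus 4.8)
The plan is to reduce the proposition to a single-orbit construction and then pack infinitely many independent copies of it into one computable structure. The single-orbit goal is: given an index $e$, build a computable injection structure on a computable subset of $\omega$ in which every orbit is of type $\Z$ and one distinguished orbit $O$ satisfies $O \equiv_T W_e$. Granting this, I partition $\omega$ into computable infinite pieces $U_{e,i}$ (for $e,i \in \omega$) and run the single-orbit construction coding $W_e$ on each $U_{e,i}$; the resulting orbits $O_{e,i}$ then give, for every $e$, infinitely many orbits $O_{e,0}, O_{e,1},\dots$ each Turing equivalent to $W_e$, while all orbits remain of type $\Z$ and the global $f$ is computable, total and injective.

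For the single-orbit construction I would build a \emph{main} orbit $O$ together with, for each $n$, a \emph{side} orbit $J_n$ seeded by a reserved element $w_n$. I partition $U_{e,i}$ computably into a backbone pool for $O$, a pool $P_n$ for each $J_n$ (with $w_n \in P_n$ distinguished), and a leftover pool. At each stage I extend $O$ and every $J_n$ at both ends, feeding in fresh reserved elements, so that any orbit never touched by a merge ends up of type $\Z$. The coding step is: when $n$ enters $W_e$ at some stage, I merge $J_n$ into $O$ by setting $f(R) = \ell$, where $R$ is the current right frontier of $O$ and $\ell$ the current left frontier of $J_n$; thereafter I stop extending $J_n$ to the left and continue extending the combined chain at its two outer ends, feeding the remaining elements of $P_n$ into the $J_n$ side so that all of $P_n$ lands in $O$. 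The decisive point is that this merge defines only values of $f$ that were previously undefined (a frontier-to-frontier link), so $f$ is never redefined, stays injective, and remains computable; and since $W_e$ only ever grows, each $J_n$ is merged at most once. Thus $P_n \subseteq O$ if $n \in W_e$, while $P_n$ forms its own $\Z$-orbit disjoint from $O$ if $n \notin W_e$.

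The equivalence $O \equiv_T W_e$ then follows from two clean reductions. For $W_e \leq_T O$, on input $n$ I make the single oracle query ``$w_n \in O$?'', which answers correctly because $w_n \in O \iff J_n$ was merged $\iff n \in W_e$. For $O \leq_T W_e$, on input $x$ I first compute which pool $x$ lies in: if $x$ is in the $O$-backbone it is in $O$; if $x \in P_n$ then $x \in O \iff n \in W_e$, decided by one query to the oracle $W_e$; and if $x$ is in the leftover pool it is not in $O$. Packing these constructions on the disjoint pieces $U_{e,i}$ is immediate, since the pieces do not interact and each localized reduction is relativized to the (computable) piece.

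The main obstacle --- and the reason the naive attempt fails --- is the tension between $f$ being computable and total and the fact that ``$n \notin W_e$'' is never confirmed at any finite stage: one cannot decide in advance whether the coding element $w_n$ belongs in $O$, yet $f(w_n)$ must be defined at some stage and can never be changed. The merge mechanism is exactly what resolves this, since committing $w_n$ to the side orbit $J_n$ does not prevent $w_n$ from later joining $O$: we need not move $w_n$, only link $J_n$'s frontier onto $O$. The remaining care is bookkeeping to guarantee totality and that every orbit is genuinely of type $\Z$, which is handled by dovetailing the two-sided extension across all components so that each one is worked on infinitely often.
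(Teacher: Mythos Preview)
Your proposal is correct and follows essentially the same approach as the paper: both build a ``main'' orbit together with side orbits $J_n$, merging $J_n$ into the main orbit precisely when $n$ enters $W_e$ by a frontier-to-frontier link, so that membership of the reserved element $w_n$ in the main orbit codes $W_e$. The only packaging difference is that you explicitly partition $\omega$ into pieces $U_{e,i}$ to obtain infinitely many orbits of each degree, whereas the paper builds one orbit per index $e$ and relies implicitly on the padding lemma (infinitely many $e'$ with $W_{e'}=W_e$) for the ``infinitely many'' clause; your version is arguably cleaner on this point.
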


\begin{proof}
We will build $\A = (A,f)$ so that the orbit of $2^e$ is a c.e. set which is Turing equivalent to $W_e$.  The function $f$ will be constructed in uniformly computable 
\emph{stages} $f_s$ for $s \in \omega$, so that $f = \bigcup_s f_s$.  The orbit of an element $a$ under $f$ will be denoted by $\Or(a)$ and the orbit of $a$ under $f_s$ will be denoted by $\Or_s(a)$.  $\A$ will have universe $\{2^e 3^i 5^j, 2^e 3^i 7^j:  e,i,j, \in \omega\}$. The orbit $\Or(2^e)$ will consist of $\{2^e 5^j, 2^e 7^j: j \in \omega\}$ together with $\{2^e 3^{i+1} 5^j,  2^e 3^{i+1} 7^j: j \in \omega,  i \in W_e\}$. Thus $i \in W_e$ if and only if $2^e 3^{i+1} \in  \Or(2^e)$, so that $W_e$ is one-one reducible to $\Or(2^e)$.  For the other direction, the orbit of $2^e$ as described above is clearly Turing reducible to $W_e$.  The construction of the mapping $f$ is as follows. For each $e$,  begin to build the function $f$ by letting  $f(2^e 5^{j+1}) = 2^e 5^j$ and $f(2^e 7^j) = 2^e 7^{j+1}$.  At the same time, build auxiliary orbits for each $2^e 3^{i+1}$ by letting  $f(2^e 3^{i+1} 5^{j+1}) = 2^e 3^{i+1} 5^j$ and $f(2^e 3^{i+1} 7^j) = 2^e 3^{i+1} 7^{j+1}$. When an element $i$ comes into $W_e$ at stage $s+1$, insert the partial orbit of $\Or_s(2^e 3^{i+1})$ onto the end of $\Or_s(2^e)$. After that, we resume building $\Or(2^e)$ by putting $2^e 7^{s+2}$ after the inserted part, while continuing to include in $\Or(2^e)$  elements of the form  $2^e 3^{i+1} 5^j$  and $2^e 3^{i+1} 7^j$ where $i \in W_{e,s+1}$ as the construction continues.

Details of the construction are as follows. For the sake of simplicity, we fix $e$ and describe the construction of the orbits of $2^e 3^i$, for all $i$. 
At stage $s$, we will have defined $f$ on $\{2^e 3^i 5^j, 2^e 3^i 7^j: i,j \leq s\}$.  Thus at stage $0$ we have only $f_0(2^e) = 2^e7$. After stage $s$, $\Or(2^e)$ will have initial element $2^e 5^s$ and final element $2^e 7^{s+1}$.  We will assume as usual that at any stage $s+1$, at most one element $x$ enters any c.e. set $W_e$ and that $x \leq s$.  At stage $s+1$, extend $f_s$ to $f_{s+1}$ as follows. For any $i \leq s$ such that $i \notin W_{e,s+1}$, simply map $2^e 3^{i+1}5^{s+1}$ to $2^e 3^{i+1} 5^s$ and map $ 2^e 3^{i+1} 7^{s+1}$ to  $2^e 3^{i+1} 7^{s+2}$.   For the orbit of $2^e$, first map
 $2^e 5^{s+1}$ to $2^e 5^s$. If some  $i \leq s$ comes into $W_e$ at stage $s+1$, then insert the current orbit of $\Or_s(2^e 3^{i+1})$ into  $\Or(2^e)$ 
right after $2^e 7^{s+1}$.  Next let $\{i_0,\dots,i_{k-1}\} = W_{e,s+1}$ and, if $W_{e,s+1}$ is not empty, extend $\Or(2^e)$ by putting the sequence  $2^e 3^{i_0+1} 5^{s+1}, 2^e3^{i_0+1} 7^{s+2}, 2^e 3^{i_1+1} 5^{s+1}, \dots, 2^e 3^{i_{k-1}+1} 7^{s+2}$ onto the end.  Finally, put $2^e 7^{s+2}$ at the very end of the orbit.  

It follows from the construction that, for each $e,i,j$, $f_{s+1}$ is defined on $\{2^e 3^i 5^j, 2^e 3^i 7^j: i,j \leq s\}$ by stage $s$, so that the map
$f =  \cup_s f_s$ is computable.  It is clear from the construction that each orbit is of type $\Z$ as described above, so that the orbit of $2^e$ is Turing equivalent to $W_e$.
\end{proof} 

\begin{theorem} There is a computable injection structure $\A$, consisting of infinitely many $\Z$-orbits, which is not weakly computably homogeneous. 
\end{theorem}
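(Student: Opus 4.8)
The plan is to take $\A$ to be exactly the computable injection structure built in Proposition \ref{proporb} and to exploit the fact that its $\Z$-orbits realize arbitrarily complicated Turing degrees. The central observation I would isolate as a preliminary claim is that a computable automorphism must send computable orbits to computable orbits. Indeed, if $\sigma$ is a computable automorphism of $\A$, then $\sigma$ is a computable bijection of the (computable) universe, so its inverse $\sigma^{-1}$ is computable as well: to compute $\sigma^{-1}(y)$ one searches for the unique $x$ with $\sigma(x) = y$, which exists by surjectivity and is unique by injectivity. Consequently, if $O$ is a computable orbit, then its image $\sigma(O)$ is computable, since $y \in \sigma(O) \Iff \sigma^{-1}(y) \in O$.

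Next I would recall from \cite{CHM11} the implication that if $(\A,\ov{c})$ is computably homogeneous, then every isomorphism between finitely generated substructures of $(\A,\ov{c})$ extends to a computable automorphism of $(\A,\ov{c})$, that is, to a computable automorphism of $\A$ fixing each coordinate of $\ov{c}$. Since $\A$ is weakly computably homogeneous iff $(\A,\ov{c})$ is computably homogeneous for \emph{some} finite tuple $\ov{c}$, it suffices, by the contrapositive of this implication, to show that for \emph{every} finite tuple $\ov{c}$ there is an isomorphism between finitely generated substructures of $(\A,\ov{c})$ that extends to no computable automorphism fixing $\ov{c}$.

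So I would fix an arbitrary finite tuple $\ov{c}$, which meets only finitely many orbits. Using Proposition \ref{proporb} (together with the padding lemma, so that each c.e. set is realized by infinitely many orbits), I would choose a computable $\Z$-orbit $O_1$, say one Turing equivalent to $\emptyset$, and a noncomputable $\Z$-orbit $O_2$, say one Turing equivalent to the halting problem, with both $O_1$ and $O_2$ disjoint from every orbit meeting $\ov{c}$. Picking $x_1 \in O_1$ and $x_2 \in O_2$, the map fixing each coordinate of $\ov{c}$ and sending $f^n(x_1) \mapsto f^n(x_2)$ is an isomorphism from $\la \ov{c}, x_1\ra$ onto $\la \ov{c}, x_2\ra$, because $O_1$ and $O_2$ are disjoint $\Z$-orbits lying off the orbits of $\ov{c}$. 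Any extension of this isomorphism to an automorphism $\sigma$ of $\A$ satisfies $\sigma(x_1) = x_2$, hence $\sigma(O_1) = O_2$; by the preliminary claim $\sigma$ cannot be computable, since $O_1$ is computable while $O_2$ is not.

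I expect the only real content to be the preliminary claim that computable automorphisms preserve computability of orbits, and the care needed in setting up the substructure isomorphism so that it genuinely fixes the constants $\ov{c}$ (which is automatic once $O_1, O_2$ are chosen off the orbits of $\ov{c}$); everything else is bookkeeping with Proposition \ref{proporb} and the cited result from \cite{CHM11}.
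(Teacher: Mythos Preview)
Your proposal is correct and follows essentially the same approach as the paper: take the structure from Proposition~\ref{proporb}, pick two orbits of distinct Turing degree lying off the orbits of the given constants, and argue that no computable automorphism can carry one to the other. Your write-up is in fact more explicit than the paper's on the key point---that a computable automorphism has computable inverse and hence preserves computability (indeed, Turing degree) of orbits---which the paper leaves implicit; note also that the padding lemma is unnecessary, since Proposition~\ref{proporb} already provides infinitely many orbits of each degree $W_e$.
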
 

\begin{proof}   Let $\A = (A,f)$ be the injection structure from Proposition \ref{proporb}.  Fixing any finite number of orbits, there are still
two orbits of different degree so that the isomorphism between these two orbits cannot be extended to a computable automorphism.  Hence $\A$ will not be weakly computably homogeneous.  Note that the isomorphism between the two orbits is still partial computable. That is, if we fix elements $a_1$ in the first orbit and $a_2$ in the second orbit
and map $a_1$ to $a_2$, then given $x$ in the orbit of $a_1$, we can compute $i \in \Z$ so that $x = f^i(a_1)$ and then compute $f^i(a_2)$ 
\end{proof}

Next,  we consider definability from exceptional sets for injection structures. 

\begin{proposition} \label{prop7}  Let $\A$ be a weakly ultrahomogeneous 
injection structure with no orbits of size one, and let $S=\{a_1,\ldots,a_n\}$ be a minimal
exceptional set.  Then $D(S)$ is the union of the finitely many
$\omega$-orbits of $\A$. 
\end{proposition}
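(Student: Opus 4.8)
The plan is to prove the two inclusions separately, using throughout the fact noted just above the statement that an automorphism of $\A$ fixing every element of $S$ must fix every element definable from $S$. Recall from Proposition \ref{prop6} that $S$ meets each of the finitely many $\omega$-orbits of $\A$ in exactly one point, and that no other orbit of $\A$ meets $S$.

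For the inclusion $\supseteq$, I would fix an $\omega$-orbit $\Or$ and let $a_i$ be the unique point of $S$ lying in $\Or$. Since $\Or$ is an $\omega$-orbit it has a unique initial element $c$ with no $f$-predecessor, and $a_i = f^m(c)$ for some $m \geq 0$. The forward points $f^k(a_i)$ (for $k \geq 0$) are each defined from $a_i$ by the term $y = f^k(a_i)$. For the points lying before $a_i$ I would use that $f$ is injective: for each $1 \leq j \leq m$ the formula $f^j(y) = a_i$ isolates a single element, namely $f^{m-j}(c)$, because any solution lies in $\Or$ and injectivity of $f^j$ forces uniqueness. Thus every element of $\Or$ is definable from $a_i$, and running over all $\omega$-orbits shows their union is contained in $D(S)$.

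For the inclusion $\subseteq$, I would show that any $x$ lying outside the $\omega$-orbits is moved by some automorphism fixing $S$, so that $x \notin D(S)$. Let $\Or$ be the orbit of $x$; by the remark above $\Or \cap S = \emptyset$. Define $\sigma$ to act as $f$ on $\Or$ and as the identity on $A \setminus \Or$. One checks that $\sigma$ is a bijection of $A$ commuting with $f$ — on $\Or$ because $f$ permutes $\Or$ and $\sigma f = f \sigma = f^2$ there, and off $\Or$ because orbits are $f$-closed — so $\sigma$ is an automorphism, and it fixes $S$ since $\Or \cap S = \emptyset$. It remains to see that $\sigma(x) = f(x) \neq x$: this is automatic when $\Or$ is a $\Z$-orbit, and for a finite cycle it follows from the hypothesis that $\A$ has no orbits of size one, so that $\Or$ has size $\geq 2$.

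The argument is essentially routine once the two directions are set up; the main point to get right is the role of the hypothesis excluding orbits of size one. Without it, a unique fixed point of $f$ would be definable by the formula $f(y) = y$ and would therefore lie in $D(S)$ while belonging to no $\omega$-orbit, and the shift automorphism above (which is the identity on a size-one orbit) would fail to move it. Checking that the single-orbit shift is genuinely an automorphism, and that the backward formulas $f^j(y) = a_i$ really isolate single points, are the only verifications requiring care.
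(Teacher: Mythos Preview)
Your proposal is correct and follows essentially the same approach as the paper's proof. The only cosmetic difference is that for the inclusion $\subseteq$ the paper uses a single global automorphism (shift by $f$ on \emph{all} non-$\omega$-orbits simultaneously, identity on the $\omega$-orbits) rather than your localized shift on just the one orbit $\Or$; both choices work equally well, and your more detailed justification of the $\supseteq$ direction and of the role of the ``no orbits of size one'' hypothesis matches the paper's brief remarks.
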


\begin{proof}
Suppose that $a\in A$ is not in an $\omega$-orbit.  Then the map fixing all
$\omega$-orbits and sending $x\to f(x)$ otherwise is an automorphism
fixing $S$ but moving $a$. Now suppose $a$ is in an $\omega$-orbit
with $a_i$ for some $i\leq n$. Then for some $n\in\N$ we have
$f^{(n)}(a)=a_i$ or $f^{(n)}(a_i)=a$.  In either case, $a$ is
definable from $S$.
\end{proof}

If there is a unique orbit of size one, then of course this is actually definable in $\A$. 

Now we consider index sets for injection structures.  
The $e$th injection structure $\A_e = (\omega, \phi_e)$ is given by the
$e$th partial recursive function $\phi_e$ when $\phi_e$ is total and is an injection. It is easy to see that 
$INJ = \{e: \A_e \ \text{is an injection structure}\}$ 
is a $\Pi^0_2$ set. 

Let $UHI = \{e: \A_e\ \text{is an ultrahomogeneous injection structure}\}$ and let
$WUI= \{e: \A_e\ \text{is a weakly ultrahomogeneous equivalence structure}\}$.

\begin{theorem}
\begin{enumerate}
\item[(a)] The index set $UHI$ is $\Pi^0_2$ complete,
and in fact $\Pi^0_2$ complete relative to $INJ$. 
\item[(b)] The index set $WUI$  is $\Sigma^0_3$ complete,
and in fact $\Sigma^0_3$ complete relative to $INJ$. 
\end{enumerate}
\end{theorem}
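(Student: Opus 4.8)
The plan is to mirror the index-set arguments already carried out for linear orders and equivalence structures, using the two characterizations from this section: $\A_e$ is ultrahomogeneous iff it has no $\omega$-orbits, and (by Proposition \ref{prop6}) $\A_e$ is weakly ultrahomogeneous iff it has only finitely many $\omega$-orbits. The key simplifying observation is that, in an injection structure, an element $a$ heads an $\omega$-orbit iff $a$ has no predecessor, i.e. $a \notin \mathrm{range}(f)$: injectivity forces the forward orbit $a, f(a), f^2(a),\dots$ of a predecessor-free element to be infinite and repetition-free. Thus the set of heads of $\omega$-orbits is exactly $\omega \setminus \mathrm{range}(f)$, a $\pz$ set. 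This identity pins down both upper bounds. Indeed, $\A_e$ has no $\omega$-orbits iff $f$ is onto, which is the $\Pi^0_2$ condition $\forall a\,\exists y\,(f(y)=a)$; intersecting with the $\Pi^0_2$ set $INJ$ shows $UHI$ is $\Pi^0_2$. For $WUI$, the condition ``$\omega \setminus \mathrm{range}(f)$ is finite'' is $\Sigma^0_3$, since it asserts the existence of a bound $n$ such that every $(n{+}1)$-tuple of distinct numbers contains an element of the c.e.\ set $\mathrm{range}(f)$; this is $\exists n\,[\Pi^0_2]$, and so $WUI$ is $\Sigma^0_3$.

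For part (a), I would reduce the $\Pi^0_2$ complete set $INF$ to $UHI$ by building a single orbit whose order type is controlled by $W_e$. At every stage, extend the chain forward by attaching the least unused element as a new head; whenever an element enters $W_e$, also extend the chain backward by attaching a new tail. The structure is always a single chain, hence an injection structure with $f(e) \in INJ$, and always extending forward with the least available number makes $f$ total and injective in the limit. The chain never acquires a greatest element, and it acquires a least element iff the backward extension eventually stops. Hence $W_e$ infinite yields a $\Z$-orbit (no $\omega$-orbit, so ultrahomogeneous) while $W_e$ finite yields an $\omega$-orbit (not ultrahomogeneous), giving $\Pi^0_2$-completeness relative to $INJ$.

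For part (b), I would reduce the $\Sigma^0_3$ complete set $COF$ to $WUI$. Partition $\omega$ into infinitely many infinite blocks $U_n = \{\langle n,k\rangle : k \in \omega\}$ and grow one orbit $O_n$ on each $U_n$. At each stage, extend every $O_n$ with $n \leq s$ forward by one element of $U_n$, which guarantees that $f$ is total and injective and that each $O_n$ is infinite; additionally, for each $n \leq s$ with $n \in W_{e,s+1}$, extend $O_n$ backward by one. Then $O_n$ stabilizes with a least element, hence is an $\omega$-orbit, iff $n \notin W_e$, and is of type $\Z$ otherwise. So the number of $\omega$-orbits equals $|\omega \setminus W_e|$: if $W_e$ is cofinite this is finite (weakly ultrahomogeneous), and if $W_e$ is coinfinite it is infinite (not weakly ultrahomogeneous), yielding $\Sigma^0_3$-completeness relative to $INJ$.

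The routine points --- totality and injectivity of the limit function, and the bookkeeping needed to run infinitely many orbits in parallel --- are all handled uniformly by always extending each orbit forward with the least available element of its reserved block. The one step that genuinely needs care is the equivalence of ``predecessor-free'' and ``heads an $\omega$-orbit,'' since this is exactly what makes the relevant predicate $\pz$ (controlling both upper bounds) and, in part (b), what fixes the precise count of $\omega$-orbits as $|\omega \setminus W_e|$. Once this equivalence is established, both reductions and both complexity estimates follow the template already used for linear orders and equivalence structures.
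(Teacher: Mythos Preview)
Your proposal is correct and follows essentially the same approach as the paper. Both arguments use the identification of $\omega$-orbit heads with elements outside $\mathrm{range}(f)$ to get the upper bounds, reduce $INF$ to $UHI$ by growing a single chain that becomes a $\Z$-orbit iff $W_e$ is infinite, and reduce $COF$ to $WUI$ by growing infinitely many orbits with the $n$-th one of type $\Z$ iff $n\in W_e$; the only differences are cosmetic (the paper gives explicit closed-form definitions for the orbits in part (b) rather than a stagewise description, and handles the backward extension once at the moment $n$ enters $W_e$ rather than repeatedly thereafter).
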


\begin{proof} (a) Note that $\A_e$ is ultrahomogeneous if and only if $\phi_e$ is onto. It follows easily that $UHI$ is a $\Pi^0_2$ set.  For the completeness, we give a reduction $f$ of the $\Pi^0_2$ complete set $INF$ to
$UHI$ in such a way that $f(e) \in INJ$ for all $e$.    The idea of the construction is that $\A_{f(e)}$ will consist of a single infinite orbit, which will be of type $\Z$ if and only if $W_e$ is infinite.  The construction of $\phi_{f(e)}$ is in stages $\phi^s$ with domain $\{0,1,\dots,2s\}$ and image a subset of $\{0,1,\dots,2s+1\}$.  At stage 0, we let $\phi_{f(e)}(0) = 1$.  After stage $s$, we have defined a partial injection $\phi^s$ for all $i \leq 2s$.  Fix $s$ and let $a$ be the unique number $\leq 2s+1$ not in the image of $\phi^s$ and let $b$ the unique element $\leq 2s+1$ not in the domain of $\phi^s$.  There are two cases in the construction at stage $s+1$. If no new element comes into $W_e$ at stage $s+1$, extend $\phi^s$ by mapping $b$ to $2s+1$ and $2s+1$ to $2s+2$.  If a new element comes into $W_e$ at stage $s+1$, then again map $b$ to $2s+1$ but now map $2s+2$ to $a$.   

If $W_e$ is infinite, then it follows from the second case of the construction that $\A_{f(e)}$ will consist of a single orbit of type $\Z$, whereas if 
$W_e$ is finite, then $\A_{f(e)}$ will consist of a single orbit of type $\omega$, Thus $\A_{f(e)}$ is ultrahomogeneous if and only if $W_e$ is infinite.

(b) It is easy to see, by quantifying over the finitely many elements not in the range of $\phi_e$, that $WUI$ is a $\Sigma^0_3$ set. For the completeness, we give a reduction $g$ of the $\Sigma^0_3$ complete set $COF$ to $WUL$ in such a way that $g(e) \in INJ$ for every $e$. 
The idea of the construction is to build infinitely many orbits with the $i$th orbit consisting of $\{2^n(2i+1): n \in \omega\}$, so that the universe will be $\omega - \{0\}$.  The construction will ensure that the orbit of $2i+1$ is of type $\Z$ if $i \in W_e$ and otherwise is of type $\omega$. It follows that $\A_{g(e)}$ will be weakly ultrahomogeneous if and only if $W_e$ is cofinite.

The mapping $\phi = \phi_{g(e)}$ can simply be defined as follows. For each $i$, there are two cases. First, suppose that $i \notin W_e$.  Then $\phi(2^n(2i+1)) = 2^{n+1}(2i+1)$ for all $n$.  Next suppose that $i \in W_{e,m+1} - W_{e,m}$ for some $m$.  Then we let $\phi(2^n(2i+1)) = 2^{n+1}(2i+1)$ for $n < 2m$, we let $\phi(2^{2m+1}(2i+1)) = 2i+1$, and for all $n \geq 2m$, we let $\phi(2^{2n}(2i+1)) = 2^{2n+2}(2i+1))$ and $\phi(2^{2n+3}(2i+1)) = 2^{2n+1}(2i+1)$.  It follows that the orbit of $2i+1$ is of type $\omega$ if $i \notin W_e$ and is of type $\Z$ if  $i \in W_e$.  To compute $\phi_{g(e)}(2^n(2i+1))$, we just have to check whether $i \in W_{e,n+1}$ and then follow the algorithm above. 
\end{proof}

\section{Graphs}

It is a well-known theorem proved in \cite{LW80} that, up to isomorphism, the countable (both infinite or finite) ultrahomogeneous graphs are
\begin{enumerate}
\item The random graph, i.e. the Fraiss\'{e} limit of the class of all finite graphs
\item The $K_n$-free random graph, i.e. the the Fraiss\'{e} limit of the class of all $K_n$-free finite graphs.
\item $mK_n$, disjoint unions of $m$ copies of $K_n$ for $m,n\leq\aleph_0$.
\item The $3\times3$ lattice graph.
\item The cycle on 5 vertices.
\item Complements of these.
\end{enumerate}

Some examples of weakly ultrahomogeneous graphs which aren't ultrahomogeneous are:
\begin{itemize}
\item The disjoint union of an ultrahomogeneous graph and a finite graph.
\item (Equivalence relations) Any graph where all components are complete, there are only finitely many component sizes, and at most one size occurs infinitely often.
\end{itemize}

\begin{proposition} \label{prop8} Suppose $\G=(V,E)$ is a countable weakly ultrahomogeneous graph. Then $\G$ is a disjoint union of $\mathcal{H}$ and $mK_n$ for fixed $0\leq m,n\leq\aleph_0$ where $\mathcal{H}$ has finitely many connected components. 
\end{proposition}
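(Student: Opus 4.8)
The plan is to split $\G$ along its connected components into the part that meets the exceptional set and the part that does not, and then to show that the latter is itself an ultrahomogeneous graph, to which the Lachlan--Woodrow classification quoted above applies directly.

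Fix a finite exceptional set $S=\{a_1,\dots,a_k\}$, so that $(\G,a_1,\dots,a_k)$ is ultrahomogeneous. Let $\HH_0$ be the union of those connected components of $\G$ that contain at least one $a_i$; since $|S|=k$, the subgraph $\HH_0$ has at most $k$ components, hence finitely many. Let $\G_0$ be the induced subgraph on the remaining vertices, i.e. the union of all components disjoint from $S$. Every vertex of $\G_0$ is distinct from each $a_i$ and non-adjacent to each $a_i$. The first and main step is to prove that $\G_0$ is ultrahomogeneous. Given finite substructures $F_1,F_2$ of $\G_0$ and an isomorphism $\varphi\colon F_1\to F_2$, the map $\mathrm{id}_S\cup\varphi$ is an isomorphism of the (relational, hence literal) substructures $S\cup F_1$ and $S\cup F_2$ of $\G$ fixing each $a_i$: it respects edges inside $F_1$ by hypothesis, and there are no edges between $S$ and either $F_1$ or $F_2$ since all the new vertices are non-exceptional. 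By weak ultrahomogeneity this extends to an automorphism $\sigma$ of $\G$ fixing each $a_i$.

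The key point is that $\sigma$ carries $\G_0$ onto itself: as a graph automorphism $\sigma$ permutes connected components, and since it fixes each $a_i$ it fixes (setwise) the component of each $a_i$, hence maps $\HH_0$ onto $\HH_0$ and therefore maps $\G_0=\G\setminus\HH_0$ onto $\G_0$. Thus $\sigma\restr\G_0$ is an automorphism of $\G_0$ extending $\varphi$, so $\G_0$ is ultrahomogeneous. It then remains to read off the conclusion from the classification. If $\G_0$ has only finitely many components, I would absorb it into the exceptional part, setting $\HH=\HH_0\sqcup\G_0$ (still finitely many components) and $m=0$. If $\G_0$ has infinitely many components, then $\G_0$ is a disconnected ultrahomogeneous graph; inspecting the items of the classification, the only disconnected graphs on the list are the $mK_n$ (the random graph, the $K_n$-free random graph, the $3\times3$ lattice, $C_5$, and the relevant complements are all connected). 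Hence $\G_0\cong mK_n$ for a fixed $n\leq\aleph_0$ with $m=\aleph_0$, and we take $\HH=\HH_0$. In every case $\G=\HH\sqcup mK_n$ with $\HH$ having finitely many components.

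The genuinely non-formal step, and the one I expect to be the main obstacle to state carefully, is that an automorphism witnessing weak ultrahomogeneity cannot move a non-exceptional component into the exceptional part; this is exactly where fixing the \emph{constants} $a_i$ pointwise (rather than merely fixing $S$ setwise) is essential, through the observation that $\sigma$ preserves the component of each $a_i$. Everything else is bookkeeping together with a direct appeal to the quoted classification, the only subtlety being the finite-versus-infinite case split on the number of components of $\G_0$, which lets us package the connected and finite cases into $\HH$ and reserve $mK_n$ for the infinite case.
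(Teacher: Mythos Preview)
Your proof is correct and takes a genuinely different route from the paper's. You argue \emph{structurally}: you split off the components meeting the exceptional set, show that the remainder $\G_0$ is itself ultrahomogeneous (using that the extending automorphism fixes each $a_i$ and hence fixes each exceptional component setwise, so $\G_0$ is invariant), and then invoke the Lachlan--Woodrow classification to identify the disconnected ultrahomogeneous graphs as $mK_n$. The paper instead argues \emph{by elimination}: assuming $\G$ has infinitely many components, it shows directly that no finite set can be exceptional unless (i) only finitely many components fail to be ultrahomogeneous, (ii) only one isomorphism type occurs among the remaining cofinitely many ultrahomogeneous components, and (iii) that type is a complete graph $K_n$---the last step by taking two non-adjacent vertices $x,y$ in one such component and a vertex $z$ from another, so that $\la\vec a,x,y\ra\cong\la\vec a,x,z\ra$ cannot extend. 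Your approach is cleaner and shorter once the classification is granted, and the lemma that $\G_0$ is ultrahomogeneous is a pleasant observation in its own right; the paper's argument is self-contained and never needs the full strength of Lachlan--Woodrow, only the trivial fact that a connected non-complete graph has two non-adjacent vertices.
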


\begin{proof}
We may assume that $\G$ has infinitely many components.  Let $\{a_1,\ldots,a_n\}\subseteq V$. If $\G$ has infinitely many non-ultrahomogeneous components, find one with no $a_i$ and use the non-ultrahomogeneity of the component to get a partial isomorphism that can't extend. If among the cofinitely many ultrahomogeneous components there are two kinds that occur infinitely often, find two non-isomorphic components containing no $a_i$. Choose $x$ from one and $y$ from the other, so $\la\textbf{a},x\ra\cong\la\textbf{a},y\ra$ but the isomorphism can't extend. Suppose the isomorphism type of the cofinitely many ultrahomogeneous components isn't $K_n$ for some $n$. Find two of these components containing no $a_i$.  Choose $x,y,z$ from these components such that $x,y$ are in the same component but don't share an edge and so $z$ is from the other component.  Then $\la\textbf{a},x,y\ra\cong\la\textbf{a},x,z\ra$ but it can't extend. 
\end{proof}

If for a graph $\G$, a subset of its components form a non-weakly ultrahomogeneous graph, then $\G$ itself is not weakly ultrahomogeneous. So, in light of Proposition \ref{prop8},  to classify weakly ultrahomogeneous graphs it is enough to look at graphs with finitely many infinite components.

Instead of a general classification, we examine a particular class of graphs. Say that a graph is locally finite if every vertex has finite degree, and say that a graph $\G$ is finitely dominated if there is a finite $F\subseteq\G$ so for every $v\in\G$ there is an $x\in F$ so $x\G v$. The proposition leads to a classification of locally finite weakly ultrahomogeneous graphs.

\begin{lemma} Let $\G$ be weakly ultrahomogeneous.
\begin{enumerate}[(a)]
\item $\G$ can have at most one component not finitely dominated.
\item If $\G$ has infinitely many components, every component is finitely dominated.
\item If $C$ is a component of $\G$ not finitely dominated, there must be a finite $F\subset C$ so every $v\in C$ has distance at most 2 from $F$.
\end{enumerate}
\end{lemma}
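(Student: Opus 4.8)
The unifying tool is the observation that, since $\G$ is weakly ultrahomogeneous, there is a finite exceptional set $\vec{a}=(a_1,\dots,a_n)$ making $(\G,\vec a)$ ultrahomogeneous; as graphs are relational, this means any two tuples from $V$ with the same quantifier-free type over $\vec a$ (same adjacencies among themselves and to the $a_i$) lie in one orbit of $\mathrm{Aut}(\G,\vec a)$. To refute weak ultrahomogeneity it then suffices to exhibit two tuples of the same type in different orbits, separated by any automorphism-invariant: which component a point lies in, its degree, or whether two points have a common neighbour. I will write $N(F)$ for the set of neighbours of $F$ and $N_k[X]$ for the vertices within distance $k$ of $X$. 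I use throughout that if a component $C$ is not finitely dominated then $C$ is infinite and, for every finite $F\subseteq C$, the set $C\setminus N(F)$ is infinite: otherwise, adjoining one neighbour of each of the finitely many vertices outside $N(F)$ to $F$ would finitely dominate $C$. In particular $W:=C\setminus N(\vec a)$ is infinite.

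For (a) and (b), suppose a component $C$ is not finitely dominated. Applying the fact above with $F=\vec a$ and then $F=\vec a\cup\{u\}$, I choose $u,v\in W$ with $u\not\sim v$, neither adjacent to any $a_i$. I then seek a point $w\notin C$ also non-adjacent to every $a_i$: in case (a) I take $w$ in a second non-finitely-dominated component, and in case (b) I take $w$ in one of the infinitely many components meeting no $a_i$. The tuples $(u,v)$ and $(u,w)$ then have the same quantifier-free type over $\vec a$, yet any automorphism fixing $\vec a$ and sending $(u,v)\mapsto(u,w)$ would fix $u$, hence fix the component $C$ setwise, while carrying $v\in C$ to $w\notin C$ — a contradiction showing $\vec a$ is not exceptional.

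For (c), the point is that every vertex of $W$ has the single quantifier-free $1$-type ``non-adjacent to all $a_i$'' over $\vec a$, so $\mathrm{Aut}(\G,\vec a)$ acts transitively on $W$ (via automorphisms which, fixing a point of $C$, also fix $C$ setwise). Transitivity makes all degrees on $W$ equal and collapses the ordered pairs of distinct points of $W$ into just two orbits, the adjacent and the non-adjacent pairs; hence ``$u,v$ have a common neighbour'' is constant on non-adjacent $W$-pairs, giving a dichotomy. If every non-adjacent pair in $W$ has a common neighbour, then any two points of $W$ are within distance $2$ in $C$, so $W\subseteq N_2[w_0]$ for a fixed $w_0\in W$; since $C\setminus W=N(\vec a\cap C)\subseteq N_2[\vec a\cap C]$, the finite set $\{w_0\}\cup(\vec a\cap C)$ $2$-dominates $C$. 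If no non-adjacent pair in $W$ has a common neighbour, then $N(z)\cap W$ is a clique for every $z$, so $C[W]$ is a disjoint union of cliques, all of equal size by transitivity, and each point of $W$ has the same number $p$ of neighbours in $N(\vec a\cap C)$. If $p\ge 1$ then every point of $W$ lies within distance $2$ of $\vec a\cap C$, so $\vec a\cap C$ finitely $2$-dominates $C$; and $p=0$ is impossible, since then $C$ has no vertex at distance exactly $2$ from $\vec a\cap C$, forcing (as $C$ is connected) either $C\subseteq N_1[\vec a\cap C]$, contradicting $W$ infinite, or, when $\vec a\cap C=\emptyset$, that the connected graph $C[W]=C$ is a single clique, hence complete and finitely dominated, against hypothesis. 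In every case $C$ is finitely $2$-dominated.

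The main obstacle is (c). The naive plan — directly producing both a ``distance-$2$'' and a ``distance-$\ge 3$'' pair off $N(\vec a)$ and separating them by the common-neighbour invariant — founders because non-$2$-domination does not guarantee vertices arbitrarily far from $\vec a$: a component can lie within bounded distance of $\vec a$ and still fail to be finitely $2$-dominated. The transitivity observation circumvents this by making the relevant local data (degree, clique size, number of boundary neighbours, existence of common neighbours) constant on $W$, and each admissible constant value is then seen to force finite $2$-domination; ensuring the witnessing automorphisms stabilize $C$ is the one point requiring care.
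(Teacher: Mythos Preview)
Your argument is correct. Parts (a) and (b) are essentially the paper's proof, with only the roles of the ``inside'' and ``outside'' vertices swapped.

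For (c), however, you take a genuinely different and considerably longer route than the paper. The paper argues by contradiction: assume $C$ is not finitely $2$-dominated. Then $C$ has diameter at least $3$, so $C$ is not ultrahomogeneous (any connected ultrahomogeneous graph has diameter at most $2$, by the same type-vs-distance trick), hence the exceptional set $\vec a$ must meet $C$. Now apply the failure of $2$-domination to the particular finite set $\vec a\cap C$: this gives a vertex $v$ at distance $\ge 3$ from $\vec a\cap C$, and a shortest path produces $v'$ at distance exactly $2$. Both $v,v'$ lie in $W$, so $\langle \vec a,v\rangle\cong\langle \vec a,v'\rangle$, yet no automorphism fixing $\vec a$ can carry $v$ to $v'$ since it preserves distance to $\vec a\cap C$. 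That's the whole proof.

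Your closing remark that this ``naive plan founders'' is mistaken: you seem to worry that non-$2$-domination might not yield vertices far from $\vec a$, but it does, simply by instantiating the finite set $F$ as $\vec a\cap C$. What you have instead is a direct proof that avoids the contradiction hypothesis entirely, exploiting transitivity of $\mathrm{Aut}(\G,\vec a)$ on $W$ to force uniformity of degrees, clique structure, and boundary connections. This is nice in that it actually \emph{identifies} a $2$-dominating set (either $\{w_0\}\cup(\vec a\cap C)$ or $\vec a\cap C$) rather than merely asserting one exists, and it does not rely on the diameter-$2$ fact about ultrahomogeneous graphs. But it is substantially more work than the paper's two-line argument.
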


\begin{proof}
\begin{enumerate}[(a)]
\item Suppose $\G$ has 2 components $C_1$ and $C_2$ which are not finitely dominated. Let $A\subset\G$ be exceptional. Choose $x_1,y\in C_1$ and $x_2\in C_2$ so none of them share an edge with $A$ or with each other. Then $\la A, x_1,x_2 \ra\cong\la A,x_1,y\ra$, but the isomorphism can't extend since the component of $x_1$ is mapped to itself but $x_2$ is mapped to $y$.

\item  Suppose $\G$ has infinitely many components and a component $C$ not finitely dominated. Let $A\subseteq \G$ be exceptional. Choose $x_1,x_2\in C$ sharing no edge with $A$ or with each other. Then choose $y$ from a component of $\G$ containing no element of $A$. Then $\la A, x_1,x_2\ra\cong\la A,x_1,y\ra$, but the isomorphism can't extend.

\item  Suppose there is a component $C$ such that for every finite $F\subseteq C$, there is $v\in C$ with distance at least 3 from $F$. Let $A\subseteq \G$ be exceptional and let $v\in C$ have distance 3 from $A\cap C$ and $v'$ have distance 2. Then $\la A,v\ra\cong\la A,v'\ra$ but the isomorphism can't extend. Note that $A\cap C\neq\emptyset$ since $A$ must intersect each non-ultrahomogeneous component and any connected ultrahomogeneous graph has diameter 2. 
\end{enumerate}
\end{proof}

\begin{proposition} Let $\G$ be a locally finite graph. Then $\G$ is weakly ultrahomogeneous iff $\G=\mathcal{H}\cup mK_n$ where $0\leq m\leq\omega, 1\leq n<\omega$ and $\mathcal{H}$ is finite.
\end{proposition}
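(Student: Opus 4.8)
The plan is to prove the biconditional by combining the preceding structural results with the locally finite hypothesis. For the reverse direction, suppose $\G = \HH \cup mK_n$ with $\HH$ finite. Then $\HH \cup mK_n$ is a disjoint union of a finite graph and the ultrahomogeneous graph $mK_n$ (item (3) of the Lempp--Wagner classification). Taking the vertex set of $\HH$ together with the exceptional set witnessing weak ultrahomogeneity of $mK_n$ (which is empty, since $mK_n$ is genuinely ultrahomogeneous) as an exceptional set, any partial isomorphism fixing these points must preserve the finite part pointwise and map copies of $K_n$ to copies of $K_n$; such a map extends to an automorphism by permuting the complete components. This is essentially the first bullet example listed before Proposition \ref{prop8}, so I would cite that observation rather than re-derive it.

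For the forward direction, assume $\G$ is locally finite and weakly ultrahomogeneous. By Proposition \ref{prop8}, $\G = \HH \cup mK_n$ where $\HH$ has finitely many components and $mK_n$ consists of $m$ copies of $K_n$ for fixed $n \leq \aleph_0$. The task reduces to showing (i) $n < \omega$, and (ii) $\HH$ is finite (when $m$ is infinite). For (i), local finiteness is immediate: each vertex of $K_n$ has degree $n-1$, so finite degree forces $n < \omega$. For (ii), I would invoke the lemma just proved. If $m = \aleph_0$, then $\G$ has infinitely many components, so by part (b) of the lemma every component is finitely dominated; in particular each component of $\HH$ is finitely dominated, and being locally finite and finitely dominated, each such component is finite. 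Since $\HH$ has only finitely many components by Proposition \ref{prop8}, $\HH$ itself is finite. If $m$ is finite, then $\G$ has only finitely many components total, and I would absorb the finitely many infinite components of $mK_n$ (there are none once $n < \omega$ unless $m$ is infinite) and any infinite components of $\HH$ into a renamed finite-plus-$mK_n$ decomposition; the cleanest route is to observe that when $m < \omega$ the whole of $\G$ has finitely many components, so I can rewrite $\G = \HH' \cup 0\cdot K_n$ with $\HH'$ finite after checking each component is finite via local finiteness and finite domination (part (a) of the lemma guarantees at most one non-finitely-dominated component, and a locally finite finitely-dominated component is finite).

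The main obstacle I anticipate is the bookkeeping around the case split on whether $m$ is finite or infinite, and making the decomposition $\G = \HH \cup mK_n$ from Proposition \ref{prop8} line up exactly with the normal form $0 \leq m \leq \omega$, $1 \leq n < \omega$, $\HH$ finite required in the statement. The substantive content is showing each relevant component is finite, and the key implication is: locally finite plus finitely dominated implies finite. This follows because a finite dominating set $F$ covers the whole component within its closed neighborhood, and local finiteness bounds each neighborhood, so $|C| \leq |F| + \sum_{x \in F} \deg(x) < \infty$. Once this implication is isolated, parts (b) and (c) of the lemma do the work of forcing finite domination in the infinitely-many-components case, and the argument closes. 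I would therefore lead with the finite-domination-plus-local-finiteness implication as a one-line observation, then dispatch both directions of the biconditional as above.
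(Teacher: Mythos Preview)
Your approach is essentially the same as the paper's, and the reverse direction together with the $m=\omega$ case of the forward direction are fine. However, there is a gap in your treatment of the case $m<\omega$.

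When $\G$ has only finitely many components, you invoke part (a) of the lemma to get at most one non-finitely-dominated component, and you correctly observe that each finitely dominated, locally finite component is finite. But this leaves the one possibly non-finitely-dominated component $C$ unaccounted for: part (a) does not rule it out, and your implication ``finitely dominated $+$ locally finite $\Rightarrow$ finite'' does not apply to it. In a locally finite graph a component is finitely dominated if and only if it is finite, so the component allowed by part (a) is, a priori, infinite.

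This is exactly where part (c) of the lemma is needed. Part (c) says that any such $C$ admits a finite $F\subseteq C$ with every vertex of $C$ within distance $2$ of $F$; since the radius-$2$ ball around a finite set in a locally finite graph is finite, $C$ is finite after all (and hence there is in fact no non-finitely-dominated component). The paper uses part (c) precisely here, in the finitely-many-components case, not in the infinitely-many-components case as your last paragraph suggests. Once you reassign part (c) to this role, your argument closes and matches the paper's.
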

\begin{proof} Clearly a graph of this form is weakly ultrahomogeneous, so assume $\G$ is weakly ultrahomogeneous. If $\G$ has infinitely many components, then $\G=\mathcal{H}\cup\omega K_n$ where $n<\omega$ and $\mathcal{H}$ has finitely many components. By the lemma, every component of $\mathcal{H}$ is finitely dominated, so $\mathcal{H}$ is finitely dominated. But a finitely dominated, locally finite graph is finite.
\\Now assume that $\G$ has only finitely many components. At most on component, $C$, isn't finitely dominated. By the lemma there is a finite $F\subseteq C$ so every $v\in C$ has distance at most 2 from $F$. So $C$ is finite, along with all the other finitely dominated components, which means that $\G$ is finite.

\end{proof}

It follows that any locally finite, weakly ultrahomogeneous countable graph has a computable copy. 

Since graphs are relational structures, every weakly ultrahomogeneous graph is computably categorical.  But unlike for equivalence structures and linear orders, this containment is strict.
\begin{example} Let $\G$ be the following computable graph on $\omega$: for each $k$, let vertices $\{7k+1,7k+2,7k+3\}$ form a $K_3$ and let vertices $\{7k+4,7k+5,7k+6,7k+7\}$ form a star with center $7k+6$.  So the first three components are
\begin{center}
\begin{tikzpicture}

[scale=.5,auto=left,every node/.style={circle,fill=black 20}]
  
  \node (n1) at (0,0)[circle,draw] {1};
  \node (n2) at (2,0)[circle,draw]  {2};
  \node (n3) at (1,2.5) [circle,draw] {3};
  
  \node (n4) at (3,0)[circle,draw]  {4};
  \node (n5) at (5,0)[circle,draw]  {5};
  \node (n6) at (4,1.1) [circle,draw] {6};
  \node (n7) at (4,2.5)[circle,draw]  {7};
  
    \node (n8) at (6,0)[circle,draw] {8};
  \node (n9) at (8,0)[circle,draw]  {9};
  \node (n10) at (7,2.5) [circle,draw] {10};

  \foreach \from/\to in {n1/n2,n2/n3,n3/n1,n4/n6,n5/n6,n7/n6,n8/n9,n9/n10,n10/n8}
    \draw [line width=.5mm] (\from) -- (\to);

\end{tikzpicture}
\end{center}
This graph can be shown to be computably categorical using a back-and-forth argument, but is not weakly ultrahomogeneous since infinitely many components aren't ultrahomogeneous.
\end{example}

\section{Trees}

The study of trees has played an important role in mathematical logic and computability theory. 
There are many ways to frame the notion of a tree. We are thinking in this paper of trees which are 
isomorphic to subtrees of $\omega^{<\omega}$.  Some terminology is necessary.   
 For a finite string $w = (w(0),w(1), \ldots, w(n-1))$, $|w|$ denotes the length $n$ of $w$. 
The empty string, denoted by $\epsilon$, is the unique string with length zero.
	 Given two strings $v$ and $w$, the \emph{concatenation} $v \fr w$ is defined by
\[
v \fr w = (v(0),v(1),\ldots,v(m-1),w(0),w(1),\ldots,w(n-1)),
\]
where $|v| = m$ and $|w| = n$.  For $m \leq |w|$, $w \restr m$ is the string $(w(0),\ldots,w(m-1))$. For any $X \in \omega^{\omega}$ and $n\in \omega$, 
the \emph{initial segment} $X \restr n$ is $(X(0),\ldots,X(n-1))$.
We say $w$ is an \emph{initial segment} or \emph{prefix} of $v$ (written $w \preceq v$) if $v = w \fr u$ for some $u\in \omega^{<\omega}$. 
This is equivalent to saying that $w = v \restr m$ for some $m\in \N$.  For a string $w$ and $X \in \omega^{\omega}$, we say that $w \prec X$ if $w = X \restr n$ for $n = |w|$.

A subset $T$ of $\omega^{<\omega}$ is a \emph{tree} if it  is closed under initial segments. That is, if $v \in T$ and $u \prec v$, then $u\in T$.
For any such tree $T$, $X\in \omega^{\omega}$ is said to be an \emph{infinite path through $T$} if $X \restr n \in T$ for all $n$. We let $[T]$ denote the set of infinite paths through $T$.  
For any tree $T$, we say that a node $w \in T$ is \emph{extendible} if there exists $X \in [T]$ such that $w \prec X$. 

A node $u \in T$ such that $v \notin T$ for any $v$ with $u \prec v$,  is called a \emph{dead end} or \emph{leaf} of $T$. 

More generally, trees can be formulated in terms of a natural partial ordering of $\prec$ described above, in terms of a binary infimum function,  in terms of successor functions, or in terms of a predecessor function. In the partial ordering formulation, we consider a tree $(T,\prec)$ as a partially ordered set with least element $\epsilon$ such that, for every $a \in T$,  $\prec$ well-orders the set $T(a) = \{x: x \prec a\}$; let the \emph{height} $ht_T(a)$ of $a$ in $T$ be defined as the order type of $T(a)$. The height of a tree $T$ is $ht(T) = sup\{ht_T(a): a \in T\}$. We will only consider trees of height $\leq \omega$, which means that $T(a)$ is finite for all $a$.  Thus $\prec$ will induce a natural predecessor function $f$, where $f(a)$ is the supremum of $T(a)$ under $\prec$, so that $f(\epsilon) = \epsilon$.  
There is also a natural \emph{meet} operation $\wedge$, defined by letting $u \wedge v$ be the supremum of $T(u) \cap T(v)$.    

In the predecessor formulation, we will consider a tree $(T,f)$ as a tree equipped with a unary predecessor function and possessing a unique 
\emph{root} $\epsilon$ such that $f(\epsilon) = \epsilon$ to make $f$ total.

For $a \in T$, let $T[a]$ denote the tree of extensions of $a$, that is, $T[a] = \{x: a \fr x \in T\}$. The \emph{rank} $rk_T(x)$ for $x \in T$ is defined by recursion as follows: 
\[
rk_T(x) = sup\{rk_T(y)+1: y \in T[x]\}
\]
In particular, for a leaf $x$ of $T$, $rk_T(x) = 0$. 
If $x$ is an extendible node of $T$, then $rk_T(x) = \infty$. Also, $rk(T) = rk_T(\epsilon)$.

R. Miller \cite{Mil05} showed that no computable tree of infinite height can be computably categorical as a partial ordering (or in the infimum
framework).  Lempp, McCoy, Miller and Solomon \cite{LMMS} characterized the computably categorical trees of finite height, in the partial ordering 
setting.  Calvert, Knight and J. Millar \cite{CKM} defined a notion of \emph{rank homogeneity} for trees in the predecessor formulation, where trees of infinite height can be computably categorical.  They constructed in particular a computable rank homogeneous tree of Scott rank $\omega_1^{ck}$. This notion was applied by Fokina, Knight, Melnikov, Quinn and Safranski \cite{FKMQS}, who showed that the class of rank homogeneous trees can be embedded into the class of torsion-free abelian groups and also into the class of Boolean algebras.  

We first consider trees of finite height, as partial orderings.   
Lempp, McCoy, R. Miller and Solomon \cite{LMMS}  characterized the computaby categorical trees $(T,\prec)$ of finite height using the notion of \emph{finite type}.

\begin{defn} A node $a$ of $T$ is \emph{of strongly finite type} if the set $S[a] = \{T[x]: x\ \text{is a successor of}\ a\}$ satisfies the following conditions: 
\begin{enumerate}
\item There are only finitely many isomorphism types in $S[a]$.
\item For any successors $x$ and $y$ of $a$, if $T[x]$ embeds into $T[y]$, then either $T[x]$ and $T[y]$ are isomorphic, or the isomorphism type of $T[y]$ appears only finitely often in $S[x]$.
\end{enumerate}
\end{defn}

$T$ is of strongly finite type if every node of $T$ is of strongly finite type. The notion of finite type is given by a recursive definition as follows. 

\begin{defn} A node $a$ of $T$ is \emph{of finite type} if it satisfies the following: 
 the set $S[a] = \{T[x]: x\ \text{is a successor of}\ a\}$ satisfies the following conditions: 
\begin{enumerate}
\item There are only finitely many isomorphism types in $S[a]$.
\item Each isomorphism type which appears infinitely often in $S[a]$ is of strongly finite type.
\item For any successors $x$ and $y$ of $a$, if $T[x]$ embeds into $T[y]$, then either $T[x]$ and $T[y]$ are isomorphic, or the isomorphism type of $T[x]$ appears only finitely often in $S[x]$  or the isomorphism type of $T[y]$ appears only finitely often in $S[x]$.
\end{enumerate}
\end{defn}

$T$ is of finite type if every node in $T$ is of finite type. Lempp et al \cite{LMMS} show that $(T,\prec)$ is computably categorical if and only if $(T,\prec)$ is of finite type. 

It is easy to see that a tree $(T,\prec)$ is ultrahomogeneous if and only if it has rank $\leq 1$,  and likewise for the meet presentation. Note that this is equivalent to saying that $T$ has height $\leq 1$. It follows from the result of Miller that if $(T,\prec)$ is computably categorical, then it has finite height. Since trees are locally finite, every weakly homogeneous tree must be computably categorical by Theorem \ref{wmain}, and hence any weakly homogeneous tree $(T,\prec)$ must have finite height. We give a short direct proof of this fact here.

\begin{lemma} \label{lemwfh}  If $(T,\prec)$ is weakly homogeneous, then $T$ has finite height.
\end{lemma}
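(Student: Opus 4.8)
The plan is to argue by contraposition: assume $T$ has infinite height and show that no finite set can serve as an exceptional set, hence $T$ is not weakly homogeneous. The key structural fact is that in the partial ordering presentation, $\langle x \rangle$ for a node $x$ is just the chain $\{y : y \preceq x\}$ of predecessors together with $x$, which is a finite linear order whose isomorphism type is determined entirely by $\mathrm{ht}_T(x)$. More generally, for a tuple $\vec{a}$ and a node $x$, the generated substructure $\langle \vec{a}, x\rangle$ is the downward closure under $\prec$ of $\{a_1,\dots,a_n,x\}$, and the isomorphism type of $\langle\vec{a},x\rangle$ records only the relative $\prec$-positions (including meets) among these finitely many predecessor-chains. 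This means that if $x$ and $x'$ sit at the same height and bear the same $\prec$-relationship to each $a_i$, then $\langle\vec{a},x\rangle\cong\langle\vec{a},x'\rangle$ fixing each $a_i$.

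First I would fix an arbitrary finite set $S=\{a_1,\dots,a_n\}\subseteq T$ and let $N$ be larger than $\max_i \mathrm{ht}_T(a_i)$. Since $T$ has infinite height, I can choose a node $x$ with $\mathrm{ht}_T(x) > N$; then $x$ has a predecessor $x' = f(x)$ with $\mathrm{ht}_T(x') = \mathrm{ht}_T(x)-1 \geq N$, so neither $x$ nor $x'$ lies below or above any $a_i$ in a way that distinguishes them — more carefully, I would arrange that the meet of $x$ with each $a_i$ is the same as the meet of $x'$ with each $a_i$ (this holds automatically since $x' \prec x$, so $x \wedge a_i$ and $x' \wedge a_i$ differ only if $a_i$ lies strictly between $x'$ and $x$ or above $x$, which the height bound and the choice of $x$ off the $a_i$-chains rule out). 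Then $\langle\vec{a},x\rangle\cong\langle\vec{a},x'\rangle$ via the map fixing each $a_i$ and sending $x\mapsto x'$. However, no automorphism of $T$ fixing all $a_i$ can send $x$ to $x'$, because automorphisms preserve height (height is $\prec$-invariant) and $\mathrm{ht}_T(x)\neq\mathrm{ht}_T(x')$. Hence $S$ is not exceptional, and since $S$ was arbitrary, $T$ is not weakly homogeneous.

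The main obstacle is verifying the claim that $x$ and $x'$ really are indistinguishable relative to $\vec{a}$, i.e.\ that the substructures are genuinely isomorphic. The subtlety is that $\langle\vec{a},x\rangle$ includes all the meets and the full chains below the $a_i$, so I must ensure my chosen $x$ does not pass through any $a_i$ and that collapsing $x$ to $x'$ does not disturb the $\prec$-pattern on the $a_i$-side. The clean way to handle this is to pick $x$ to be a node of height $>N$ whose predecessor chain meets the set $\bigcup_i \{y : y \preceq a_i\}$ in exactly the same initial segment as $x'$ does — which is automatic since $x'\prec x$ and $\mathrm{ht}_T(x') \geq N > \mathrm{ht}_T(a_i)$ forces any common predecessor of $x$ (or $x'$) and $a_i$ to have height $\leq \mathrm{ht}_T(a_i) < N$, so $x\wedge a_i = x'\wedge a_i$ for every $i$. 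Once this meet-preservation is established, Lemma~\ref{lem2} (or direct inspection, since these substructures are finite) gives the isomorphism, and the height-invariance of automorphisms finishes the argument.
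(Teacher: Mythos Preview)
Your overall strategy matches the paper's: argue by contraposition, fix an arbitrary finite $S$, pick an element above the maximum height of $S$, pair it with an adjacent node, and observe the resulting partial isomorphism cannot extend because automorphisms preserve height. The paper goes up (picks $b$ of height $>n$ with rank $\geq 1$ and a successor $c$), you go down (pick $x$ and its predecessor $x'$); either direction works.

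However, you have a genuine confusion about generated substructures that, taken literally, breaks your argument. In the partial-order presentation $(T,\prec)$ the language is purely relational, so $\langle \vec a, x\rangle$ is simply the set $\{a_1,\dots,a_n,x\}$ with $\prec$ restricted---not its downward closure, and certainly not containing meets or predecessor chains. Under your downward-closure reading, $\langle \vec a, x\rangle$ would contain the full chain below $x$, of length $\mathrm{ht}_T(x)+1$, while $\langle \vec a, x'\rangle$ would contain a chain of length $\mathrm{ht}_T(x')+1 = \mathrm{ht}_T(x)$; these have different cardinalities and are \emph{not} isomorphic. So the meet analysis you carry out is both unnecessary and, as stated, aimed at the wrong claim.

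Once you use the correct (relational) notion of substructure, the verification becomes a one-liner, exactly as in the paper: since $\mathrm{ht}_T(a_i) < N \leq \mathrm{ht}_T(x')$ and $x' \prec x$, each $a_i$ satisfies $a_i \prec x \iff a_i \prec x'$ (both sides of the chain below $x$ are linearly ordered and $a_i$ sits below $x'$) and neither $x \prec a_i$ nor $x' \prec a_i$ can hold. Hence $S\cup\{x\}\cong S\cup\{x'\}$ fixing $S$, and you are done.
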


\begin{proof} Suppose that $T$ has infinite height and let a finite set $S$ be given.  Then let $n$ be the maximum height of any element of $S$,  let $b$ be an element of height $>n$ of rank $\geq 1$ and let $c$ be a successor of $b$.  Then there is an isomorphism of $S \cup \{b\}$ to $S \cup \{c\}$ fixing $S$ and mapping $b$ to $c$, which cannot be extended to an automorphism of $T$.  Hence $T$ is not weakly ultrahomogeneous. 
\end{proof}

\begin{prop} \label{prop9}  $(T,\prec)$ is weakly ultrahomogeneous if and only if the set of elements which have rank $\geq 1$ is finite. 
\end{prop}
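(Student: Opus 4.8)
The plan is to prove both implications by taking the \emph{internal nodes} of $T$ --- those of rank $\geq 1$, equivalently those possessing at least one successor --- as the canonical exceptional set. Write $R=\{x\in T: rk_T(x)\geq 1\}$, and note two structural facts: $R$ is closed under predecessors (if $x$ has a successor then so does $f(x)$), so $R$ is a subtree; and every element of $T\setminus R$ is a leaf. Since in the partial-ordering language a finitely generated substructure is just a finite subset with the induced order, the hypothesis $\la\vec a,x\ra\cong\la\vec a,y\ra$ (fixing $\vec a$) means precisely that $x$ and $y$ bear the same $\prec$-relationship to each coordinate of $\vec a$. Recall also that rank is preserved by automorphisms, which will be the invariant separating witnesses in the forward direction.

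For the backward direction I would assume $R$ is finite and show that $R$ is itself an exceptional set, which suffices. (Note $R$ finite is automatically compatible with finite height: a rank-$\infty$ node, hence an infinite path, would place infinitely many nodes in $R$.) Suppose $\la R,\vec x\ra\cong\la R,\vec y\ra$ via $\varphi$ fixing $R$ pointwise. Any $x_j\in R$ satisfies $y_j=\varphi(x_j)=x_j$, so I discard those and assume every $x_j,y_j$ is a leaf with predecessor $p_j=f(x_j)\in R$. The key local fact is that $\varphi$ must send $x_j$ to a leaf-child of the \emph{same} parent $p_j$: from $p_j\prec x_j$ and $\varphi(p_j)=p_j$ we get $p_j\prec y_j$, and any $z$ strictly between $p_j$ and $y_j$ would be internal, hence in $R$, hence fixed by $\varphi$, forcing $z$ strictly between $p_j$ and the leaf $x_j$ --- impossible. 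Thus $\varphi$ restricts, for each $p\in R$, to a finite partial injection on the set of leaf-children of $p$; I extend each such injection to a full permutation of the (possibly infinite) set of leaf-children of $p$, and let $\sigma$ be the identity on $R$ together with these permutations. Since the ancestor set of any leaf-child of $p$ is exactly $\{p\}\cup\mathrm{anc}(p)\subseteq R$, the map $\sigma$ both preserves and reflects $\prec$, so it is an automorphism extending $\varphi$.

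For the forward direction I would prove the contrapositive: if $R$ is infinite then no finite $S$ is exceptional. If $T$ has infinite height this is immediate from Lemma \ref{lemwfh}. Otherwise $R$ is an infinite subtree of finite height, so by K\"onig's lemma some $a\in R$ has infinitely many internal children $c_1,c_2,\dots$. Given a finite $S$, the descendant sets of distinct $c_k$ are disjoint, so all but finitely many $c_k$ both avoid $S$ and have $S$-free descendant sets; fix two such children $c$ and $c'$ and pick any leaf $\ell$ below $c$. Then $\ell$ and $c'$ have the same $S$-ancestors --- namely $S\cap(\{a\}\cup\mathrm{anc}(a))$ --- no $S$-descendants, and are incomparable to every other element of $S$; hence the map fixing $S$ and sending $\ell\mapsto c'$ is an isomorphism of the generated substructures. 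But $\ell$ is a leaf (rank $0$) and $c'$ is internal (rank $\geq 1$), so by invariance of rank this isomorphism cannot extend to an automorphism, and $S$ is not exceptional.

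The main obstacle is the bookkeeping in the forward direction: I must produce two witnesses of \emph{different} rank that nonetheless bear \emph{identical} order-relations to all of $S$, and this is delicate because $S$ may contain ancestors (even the root), so no node is automatically incomparable to $S$. Comparing a deep leaf $\ell$ under one internal sibling with a second internal sibling $c'$ is exactly the device that aligns the ancestor-chains relative to $S$ while separating the ranks; invariance of rank under automorphisms then yields non-extendability with no further computation.
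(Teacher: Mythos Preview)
Your proof is correct. The backward direction is essentially identical to the paper's: take the set $R$ of internal nodes as the exceptional set and extend any partial isomorphism by permuting the leaf-children of each $p\in R$.

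The forward direction is correct but more elaborate than necessary. The paper argues as follows: assuming finite height (via Lemma~\ref{lemwfh}), let $n$ be least such that infinitely many nodes of height $n$ have rank $\geq 1$; given finite $S$, pick such a $b$ of height $n$ outside the downward closure of $S$ and let $c$ be any successor of $b$. Then $S\cup\{b\}\cong S\cup\{c\}$ fixing $S$ (since $b$ and $c$ have the same $S$-ancestors and no $S$-descendants), but $b$ and $c$ have different heights, so no automorphism extends this. Your route---K\"onig's lemma on $R$ to find a node with infinitely many internal children, then comparing a leaf $\ell$ below one such child with a distinct internal sibling $c'$, using \emph{rank} rather than \emph{height} as the invariant---reaches the same conclusion but requires more bookkeeping (disjointness of descendant sets, existence of a leaf below $c$, matching the $S$-ancestor chains for $\ell$ and $c'$). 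The paper's choice of witnesses $b$ and $c=$ a successor of $b$ makes the ``same $S$-relations'' verification immediate, since $c$'s ancestors are exactly $\{b\}\cup\mathrm{anc}(b)$ and $b\notin S$.
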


\begin{proof} Suppose  that $T$ is weakly ultrahomogeneous but has infinitely many elements of rank $\geq 1$.  It follows from Lemma \ref{lemwfh} that $T$ has finite height. Now let $n$ be the least such that there are infinitely many elements with height $n$ which have rank $\geq 1$. Now given a finite set $S$, choose an element $b$ of height $n$ and rank $\geq 1$, which is not in the downward closure of $S$ and let $c$ be a successor of  $b$. Then there is an isomorphism of $S \cup \{b\}$ to $S \cup \{c\}$ fixing $S$ and mapping $b$ to $c$, which cannot be extended to an automorphism of $T$. Hence $T$ is not weakly ultrahomogeneous.

For the other direction, suppose that all but finitely many elements of $T$ have rank 0, that is, are leaves of $T$. Let $S$ be the set of elements which have successors, 
and suppose that $\phi$ is an isomorphism from a subtree $T_1$ of $T$ to a subtree $T_2$ of $T$, where $S \subseteq T_1$ and $S \subseteq T_2$ and $\phi(u) = u$ for all $u \in S$.  For any element $x$ of $T_1 - S$, $x$ has a predecessor $a$ in $S$ and has no successors. Then $\phi(x)$ must also be a successor of $a$ and have no successors. 
It follows that any permutation of $S[a]$ will be an isomorphism, so that $\phi$ can be extended to an automorphism of $T$. 
\end{proof}

It follows that every weakly ultrahomogeneous tree $(T,\prec)$ has a computable copy.  That is, $T$ has a finite subtree $S$ of nodes which have successors and each element of this finite tree has some (possibly infinite) number of successors, which are all leaves of $T$ and this clearly always has a computable representation. 

If $T$ has only finitely many elements of height $\geq 1$, then certainly it has only finitely many elements of rank $\geq 1$, and is therefore weakly ultrahomogeneous. 
The following example shows that these are not equivalent conditions. 

\begin{example} Let $T = \{\epsilon,(0)\} \cup \{(0,m): m \in \omega\}$. Then only the first two elements have rank $\geq 1$, but infinitely many elements have height 2,
and $T$ is weakly ultrahomogeneous with exceptional set $\{\epsilon, (0)\}$. 
\end{example}

The following result shows that the weakly ultrahomogeneous trees live well inside the class of computably categorical trees.

\begin{prop} Every computable weakly ultrahomogeneous tree $(T, \prec)$  is of strongly finite type. 
\end{prop}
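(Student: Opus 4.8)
The plan is to use Proposition \ref{prop9} to reduce the statement to a purely finite combinatorial fact about $T$. Since $(T,\prec)$ is weakly ultrahomogeneous, Proposition \ref{prop9} tells us that the set $N$ of nodes of rank $\geq 1$ is finite. A node has rank $\geq 1$ precisely when it has at least one successor (a leaf has rank $0$, and any non-leaf has a successor of rank $\geq 0$, hence rank $\geq 1$), so this says exactly that all but finitely many nodes of $T$ are leaves and that every non-leaf node lies in the finite set $N$. This is the single structural input the whole argument rests on.

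To verify that $T$ is of strongly finite type I would check the two clauses of the definition at an arbitrary node $a$. If $a$ is a leaf then $S[a] = \emptyset$ and both clauses hold vacuously, so I may assume $a \in N$. I would then split the successors of $a$ into the leaf-successors, each contributing the one-node tree to $S[a]$ (a single isomorphism type, possibly occurring infinitely often), and the non-leaf successors, each of which lies in $N$. Because $N$ is finite there are only finitely many non-leaf successors of $a$, so only finitely many isomorphism types $T[x]$ arise from them; together with the trivial type coming from the leaf-successors this yields clause (1), that $S[a]$ contains only finitely many isomorphism types.

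For clause (2), suppose $x$ and $y$ are successors of $a$ with $T[x]$ embedding into $T[y]$ but $T[x] \not\cong T[y]$. Since a one-node tree admits only the trivial embedding, the combination of embedding and non-isomorphism forces $T[y]$ to have more than one node, so $y$ has a successor and therefore $y \in N$. Any successor whose subtree is isomorphic to $T[y]$ must likewise be a non-leaf, and there are only finitely many such nodes in all of $T$; hence the isomorphism type of $T[y]$ occurs only finitely often in $S[x]$ (indeed in $S[z]$ for any node $z$), which is clause (2). As $a$ was arbitrary, every node of $T$ is of strongly finite type.

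The one point requiring care — and the only place the argument is not completely formulaic — is that a non-leaf successor $x$ of $a$ may still have an \emph{infinite} subtree $T[x]$, exactly as in the example where the node $(0)$ carries infinitely many leaf children. One therefore cannot conclude clause (1) by declaring the subtrees $T[x]$ finite. The finiteness that actually drives the proof is not the size of any individual $T[x]$ but the finiteness of $N$: only finitely many non-leaf nodes exist in total, so only finitely many non-leaf successors of $a$, and hence only finitely many subtrees of positive rank, can occur among the members of $S[a]$. Once this observation is isolated, both clauses follow immediately, so I expect the write-up to be short.
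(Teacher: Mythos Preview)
Your proof is correct and follows essentially the same approach as the paper: both rely on Proposition~\ref{prop9} to conclude that only finitely many nodes are non-leaves, and then verify the two clauses of the definition of strongly finite type directly from this. Your version is slightly cleaner in that it avoids the paper's induction on rank (which is not really needed, since the definition of strongly finite type at $a$ does not refer to the successors being of strongly finite type), but the core argument is identical.
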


\begin{proof} Let $(T,\prec)$ be a computable weakly ultrahomogeneous tree. Every leaf node is clearly of strongly finite type, so we consider a node $a\in T$ such that all nodes with rank less than $rk(a)$ are of strongly finite type. There are only finitely many isomorphism types of $T[x]$ in the whole tree, hence only finitely many in $S[a]$ and each is of strongly finite type by assumption. Among the successors of $a$ all but finitely many are leaf nodes, so the second condition will be satisfied. Thus $a$ is of strongly finite type.
\end{proof}

We can characterize exceptional sets for weakly homogeneous trees $(T,\prec)$ as follows. For any $K \subseteq T$, let $L_{K,T}(a) = \{x \in K: x \preceq a\}$ and let 
$U_{K,T}(a) = \{x\in K: a \preceq x\}$. 

\begin{lemma} \label{lemluiso}  For any tree $(T, \prec)$ and any $a,b \in T$, $K \cup \{a\}$ is isomorphic to $K \cup \{b\}$ if and only if $L_{K,T}(a) = L_{K,T}(b)$ and
$U_{K,T}(a) = U_{K,T}(b)$.
\end{lemma}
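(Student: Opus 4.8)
The plan is to prove both directions of this biconditional by unwinding what an isomorphism between the partial structures $K \cup \{a\}$ and $K \cup \{b\}$ must do. The key observation is that in the partial ordering presentation with all elements of $K$ held fixed pointwise, the only freedom the isomorphism has is where it sends the single extra element $a$, namely to $b$; so the whole content is whether the map fixing $K$ and sending $a \mapsto b$ preserves $\prec$ in both directions. Since $\prec$ is a partial order, preserving it relative to the fixed set $K$ amounts precisely to saying that $a$ and $b$ sit in the same position relative to $K$, both as to which elements of $K$ lie below them and which lie above them. This is exactly the content of the two conditions $L_{K,T}(a) = L_{K,T}(b)$ and $U_{K,T}(a) = U_{K,T}(b)$.

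For the forward direction, I would suppose $\varphi : K \cup \{a\} \to K \cup \{b\}$ is an isomorphism. First I would argue that $\varphi$ must fix $K$ pointwise and send $a$ to $b$: the only way the two sets can be isomorphic as suborderings is if the ``extra'' point of the domain maps to the ``extra'' point of the range, with $\varphi$ restricting to an automorphism of $K$ that—combined with order-preservation—forces $\varphi \restr K$ to be the identity (one should note here that if $a \in K$ or $b \in K$ the claim is trivial, so assume $a,b \notin K$). Then for any $x \in K$, using that $\varphi$ preserves $\prec$ in both directions and fixes $x$, I get $x \preceq a \iff \varphi(x) \preceq \varphi(a) \iff x \preceq b$, which gives $L_{K,T}(a) = L_{K,T}(b)$; the dual computation with $a \preceq x$ gives $U_{K,T}(a) = U_{K,T}(b)$.

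For the reverse direction, assume the two equalities hold and define $\varphi$ to be the identity on $K$ together with $\varphi(a) = b$. I would then verify that $\varphi$ is an order isomorphism by checking, for each pair of elements in $K \cup \{a\}$, that $\prec$ is preserved and reflected. The only nontrivial pairs are those involving $a$: for $x \in K$, the condition $x \preceq a$ is equivalent to $x \in L_{K,T}(a) = L_{K,T}(b)$, i.e.\ to $x \preceq b = \varphi(a)$, and symmetrically for $a \preceq x$ via the $U$-sets; comparisons strictly within $K$ are handled by the identity. This establishes that $\varphi$ is a bijective order-embedding, hence an isomorphism.

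I expect the main obstacle to be the bookkeeping in the forward direction needed to justify that $\varphi$ must be the identity on $K$ and carry $a$ to $b$, rather than permuting $K$ nontrivially. One must rule out the possibility that $a$ maps to some element of $K$ (with $b$ receiving $a$'s old slot), which is where cardinality together with the order structure is used: since $K$ is common to domain and range and $\varphi$ is an order isomorphism of finite or infinite posets, the distinguished extra element is determined order-theoretically. Care is also needed in the degenerate cases where $a$ or $b$ already lies in $K$, which I would dispatch separately as they make the statement immediate. Beyond that, the verification is a routine pair of ``$\iff$'' chains, so the proof should be short.
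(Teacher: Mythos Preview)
Your reverse direction and your $\iff$ chains are exactly what the paper does (its entire proof is one sentence carrying out that computation). The gap is in your forward direction: you propose to show that an \emph{arbitrary} isomorphism $\varphi:K\cup\{a\}\to K\cup\{b\}$ must fix $K$ pointwise and send $a\mapsto b$. You correctly flag this as the main obstacle, but your sketch (``the distinguished extra element is determined order-theoretically'') does not go through, and in fact the claim is false. For a counterexample, take the tree with root $r$, children $c,d$, and grandchildren $e$ above $c$ and $f$ above $d$; set $K=\{c,f\}$, $a=e$, $b=d$. Then $K\cup\{a\}$ is a three-element poset whose only relation is $c\prec e$, while $K\cup\{b\}$ is a three-element poset whose only relation is $d\prec f$; these are isomorphic (via $c\mapsto d$, $e\mapsto f$, $f\mapsto c$), yet $L_{K,T}(e)=\{c\}\neq\emptyset=L_{K,T}(d)$. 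So the forward implication fails under the reading you are trying to establish.

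The intended reading, visible both from the paper's one-line proof and from how the lemma is invoked in the proof of the next proposition, is that ``$K\cup\{a\}$ isomorphic to $K\cup\{b\}$'' means \emph{via the map that fixes $K$ pointwise and sends $a$ to $b$} --- precisely the situation that arises when testing whether $K$ is an exceptional set. Under that reading there is nothing left to argue beyond your two $\iff$ chains (which are correct), and the attempted reduction of an arbitrary $\varphi$ to this specific map should simply be dropped.
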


\begin{proof} $K \cup \{a\}$ is isomorphic to $K \cup \{b\}$ if and only if, for any $x \in K$, $x \preceq a \iff x \preceq b$ and $a \preceq x \iff b \preceq x$, which is if and only if $L_{K,T}(a) = L_{K,T}(b)$ and
$U_{K,T}(a) = U_{K,T}(b)$.
\end{proof}

\begin{prop} \label{prop9.5} Let $(T, \prec)$ be weakly ultrahomogeneous. Then for any $K \subseteq T$,
$K$  is an exceptional set for $(T,\prec)$ if and only if
\begin{itemize}
\item[(i)] For $a \in T$ of rank $\geq 1$, there exists $y \in K$ such that $a \preceq y$, and 
\item[(ii)] For any $a,b \in T$ such that $a \prec b$, either $b \in K$ or there exists $z \in K$ such that $a \preceq z$ but not $b \preceq z$.  
\end{itemize}
\end{prop}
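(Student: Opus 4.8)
The plan is to reduce everything to the rigid finite skeleton supplied by Proposition~\ref{prop9}. Since $(T,\prec)$ is weakly ultrahomogeneous, its set $S$ of nodes of rank $\geq 1$ is finite, $T$ has finite height, and every node outside $S$ is a leaf. Throughout I use Lemma~\ref{lemluiso} to replace each isomorphism test $\langle K,x\rangle\cong\langle K,y\rangle$ by the combinatorial equalities $L_{K,T}(x)=L_{K,T}(y)$ and $U_{K,T}(x)=U_{K,T}(y)$, together with the basic fact that any automorphism of $T$ fixing a node $w$ fixes every $v\preceq w$ (the ancestors of $w$ form a finite $\prec$-chain mapped onto itself, hence pointwise). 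Reformulating the hypotheses, (i) says $U_{K,T}(a)\neq\emptyset$ for every $a$ of rank $\geq 1$, and (ii) says that whenever $a\prec b$ and $b\notin K$ one has $U_{K,T}(b)\subsetneq U_{K,T}(a)$.

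For necessity I argue contrapositively, treating the two conditions separately. If (i) fails at some $a$ of rank $\geq 1$ with $U_{K,T}(a)=\emptyset$, I pick a leaf $\ell$ with $a\prec\ell$ (one exists as $T$ has finite height); a short check gives $L_{K,T}(a)=L_{K,T}(\ell)$ and $U_{K,T}(a)=U_{K,T}(\ell)=\emptyset$, so $\langle K,a\rangle\cong\langle K,\ell\rangle$ by Lemma~\ref{lemluiso}, yet no automorphism maps $a$ to $\ell$ since rank is preserved and $rk(a)\geq 1>0=rk(\ell)$. If (ii) fails at some $a\prec b$, I first reduce to the case that $b$ is an immediate successor of $a$ (the failure propagates to the child of $a$ lying below $b$). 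Then no element of $K$ lies strictly between $a$ and $b$, so $L_{K,T}(a)=L_{K,T}(b)$, while $U_{K,T}(a)=U_{K,T}(b)$ by the failure of (ii); hence $\langle K,a\rangle\cong\langle K,b\rangle$. Were $a\mapsto b$ to extend to an automorphism $\sigma$ fixing $K$, then $a\prec b=\sigma(a)$ would give $a\prec\sigma(a)\prec\sigma^2(a)\prec\cdots$, an infinite $\prec$-chain, contradicting the finite height of $T$.

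For sufficiency, assume (i) and (ii) and let $\varphi\colon\langle K,\vec x\rangle\to\langle K,\vec y\rangle$ fix $K$ with $\varphi(x_i)=y_i$; I extend it to an automorphism. By (i) every node of $S$ lies below some element of $K$, hence is fixed by every $K$-fixing automorphism, so I set my extension to be the identity on $S$. Now $\varphi$ preserves $L_{K,T}$ and $U_{K,T}$, and I first show each non-leaf $x_i$ satisfies $y_i=x_i$: choosing $k\in U_{K,T}(x_i)$, both $x_i,y_i$ lie $\preceq k$ and are comparable, and if they were distinct, say $x_i\prec y_i$ with $y_i\notin K$ (else $x_i=\varphi^{-1}(y_i)=y_i$), then (ii) forces $U_{K,T}(y_i)\subsetneq U_{K,T}(x_i)$, contradicting $U_{K,T}(x_i)=U_{K,T}(y_i)$. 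It remains to handle the leaves: for each leaf $x_i\notin K$ with predecessor $p=f(x_i)$ I must match it to a leaf $y_i$ having the same predecessor, after which I extend by permuting, for each $p\in S$, the leaf-successors of $p$ so as to agree with $\varphi$, to fix those finitely many leaf-successors lying in $K$, and to match the remaining interchangeable leaves by an arbitrary bijection; since leaves with a common predecessor share a $K$-type, the result is an automorphism extending $\varphi$.

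The main obstacle is precisely this leaf step: showing that two leaves of equal $K$-type must share a predecessor, equivalently that a $K$-fixing automorphism cannot move a leaf to a leaf hanging from a different node of $S$. This is where condition (ii) must do its work, and I expect to prove that (i)--(ii) force the $K$-type of a leaf to determine its predecessor by walking the path $\epsilon=v_0\prec v_1\prec\cdots\prec p\prec x_i$ and invoking (ii) at each step to certify that some element of $K$ records the branch taken, so that $p$ is recovered as the $\preceq$-largest ancestor $v\preceq x_i$ at which $U_{K,T}$ strictly drops. The delicate combinatorial heart is to verify that the branch witnesses supplied by (ii) genuinely enter the $K$-type of $x_i$ (i.e.\ are comparable to $x_i$) rather than merely sitting above $p$ on a side branch; this is exactly the point at which one must align the definition of exceptional set with the induced substructures computed in Lemma~\ref{lemluiso}, and it is the step that carries all the content of the proposition.
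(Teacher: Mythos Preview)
Your necessity argument and the non-leaf part of sufficiency essentially match the paper's: both show that distinct nodes of rank $\geq 1$ are separated by their $(L_{K,T},U_{K,T})$-pair and hence fixed by any $K$-fixing partial isomorphism. Your infinite-chain argument for the failure of (ii) is a harmless variant of the paper's height argument.

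You are right to isolate the leaf step as the crux, and your worry that the witnesses supplied by (ii) may sit ``on a side branch'' rather than enter the $K$-type of the leaf is exactly on target --- but the obstacle is fatal, not merely delicate. Take $T$ with root $r$; children $p,q,n_1,n_2,\ldots$ of $r$ (the $n_i$ leaves); leaf children $\ell_1,\ell_2$ of $p$; and leaf children $m_1,m_2$ of $q$. The rank-$\geq 1$ nodes are $r,p,q$, so $T$ is weakly ultrahomogeneous. Set $K=\{\ell_1,m_1\}$. One checks directly that (i) and (ii) hold; in particular, for $a\in\{p,q\}$ and $b$ its non-$K$ child, the unique element of $K$ above $a$ is a sibling of $b$ and so witnesses (ii) without being comparable to $b$. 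Yet $\langle K,\ell_2\rangle$ and $\langle K,m_2\rangle$ are three-element antichains, so the map fixing $K$ and sending $\ell_2\mapsto m_2$ is a partial isomorphism; any automorphism fixing $\ell_1$ must fix its predecessor $p$ and hence send $\ell_2$ to a child of $p$, never to $m_2$. Thus $K$ satisfies (i)--(ii) without being exceptional, and the $K$-type of a leaf need not determine its predecessor. The paper's proof glosses over this same step (it tacitly assumes $\phi(a_i)\in L[c]$ whenever the leaf $a_i$ lies in $L[c]$), so the gap is in the statement itself rather than in your argument alone.
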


\begin{proof} Supose first that $S$ does not satisfy condition (i) above, and let $a \prec b$ where there is no element of $K$ above $a$.  Then  $U_{K,T}(a) = U_{K,T}(b) = \emptyset$ and 
 $L_{K,T}(a) = L_{K,T}(b)$ since $a \prec b$ and there is no element of $K$ above $a$. It follows from Lemma \ref{lemluiso} that $K  \cup \{a\}$ and $K \cup \{b\}$ are isomorphic.
This cannot be extended to an automorphism of $T$ since the height of $a$ is less than the height of $b$.  Next suppose that $S$ does not satisfy condition (ii) and let $a \prec b$ such that $b \notin K$ and for any $x \in K$, if $a \prec x$, then $b \prec x$.  Again it follows that $U_{K,T}(a) = U_{K,T}(b)$ and $L_{K,T}(a) = L_{K,T}(b)$ , so that $K \cup \{a\}$ and $K \cup \{b\}$ are isomorphic,  but there is no extension of this isomorphism to an automorphism of $T$. 

For the other direction, suppose that $K$ satisfies the two conditions.  We claim that for any two distinct elements $a$ and $b$ of rank $\geq 1$ in $T$, either $U_{K,T}(a) \neq U_{K,T}(b)$ or $L_{K,T}(a) \neq L_{K,T}(b)$. The proof of the claim is in two cases. 

First, suppose that $a$ and $b$ are incomparable. Then by condition (i), there exists an element $x \in K$  such that $a \preceq x$ but we cannot have $b \prec x$ since $T$ is a tree. 
Thus $U_{K,T}(a) \neq U_{K,T}(b)$.  Next, suppose without loss of generality that $a \prec b$. By condition (ii), there are two possibilities. We may have $b \in K$, in which case $b \in L_{K,T}(b) \setminus L_{K,T}(a)$.  Or we have  $z \in K$ such that $a \preceq z$ but not $b \preceq z$.  In that case, $z \in U_{K,T}(a) \setminus U_{K,T}(b)$.  This proves the claim.

Now let $\phi$ be an isomorphism mapping $a_1,\dots,a_k$ to $b_1,\dots,b_k$ and fixing each element of $K$. For each $a_i$ of rank $\geq 1$, we have some $x \in K$ such that 
$a_i \preceq   x$ and thus by the isomorphism $b_i \preceq x$. Thus $b_i$ also has rank $\geq 1$. The isomorphism $\phi$ shows that $K \cup \{a_i\}$ is isomorphic to $K \cup \{b_i\}$, so that 
$L_{K,T}(a_i) =  L_{K,T}(b_i)$ and $U_{K,T}(a_i) = U_{K,T}(b_i)$. It now follows from the claim that in fact $a_i = b_i$.  Thus we may define our desired automorphism $H$  to be the identity on $K$ and on the subtree of nodes of rank $\geq 1$. To complete the definition of $H$, it suffices to define $H$ on the leaves of $T$.  Fix an element $c$ of $T$  with a nonempty set $L[c]$ of leaves in $S[c]$. For any $x \in L[c] \cap K$, let   $H(x) =  x$. For the remaining elements of $L[c]$, observe that any permutation of these elements may be used to obtain an automorphism of $T$. So we may use any permutation for $H$ which agrees with $\phi(a_i)$ for each $a_i \in L[c]$.
\end{proof}

Now we consider index sets for trees under the partial order presentation.  The $e$th tree $\A_e = (\omega, \prec_e, \epsilon)$ is given by the
$e$th partial recursive function $\phi_e$ when $\phi_e$ is total and is the characteristic function of a tree.  The usual condition for a partial ordering to be a tree is that, for any element $a$, $\{x: x \prec_e a\}$ is well-ordered.  This would be a $\Pi^1_1$ condition, but it can be simplified here by first requiring that $\{x: x \prec_e a\}$ is finite, which is a $\Pi^0_3$ condition, and then checking to see that  it is totally ordered.  It follows that $TRO= \{e: \A_e \ \text{is a tree}\}$ is a $\Pi^0_3$ set. Let   $UHT = \{e: \A_e\ \text{is an ultrahomogeneous tree}\}$ and let $WUT= \{e: \A_e\ \text{is a weakly ultrahomogeneous tree}\}$. 

\begin{theorem}
\begin{enumerate}
\item[(a)] The index set $TRO$  is $\Pi^0_3$ complete.
\item[(b)] The index set $UHT$ is $\Pi^0_1$ complete relative to $TRO$. 
\item[(c)] The index set $WUT$ is $\Sigma^0_2$ complete relative to $TRO$. 
\end{enumerate}
\end{theorem}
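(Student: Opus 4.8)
The plan is to read each upper bound off the structural characterizations already in hand, and then to prove each completeness by a single uniform stage construction whose only nontrivial behavior is driven by an enumeration $W_e$, always producing a legitimate tree so that the hardness reductions land inside $TRO$.

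\emph{Upper bounds.} For (a), $TRO$ is $\Pi^0_3$ because totality of $\phi_e$ is $\Pi^0_2$, the partial-order axioms together with the ``totally ordered below each node'' clause are $\Pi^0_1$ given totality, and the requirement that $\{x : x \prec_e a\}$ be finite for every $a$ is $\forall a\,\exists N\,\forall x\,(x \prec_e a \to x < N)$, i.e. $\Pi^0_3$. For (b), once $e \in TRO$ the tree is ultrahomogeneous iff it has height $\leq 1$, i.e. iff $\neg\exists a,b\,(\epsilon \prec_e a \wedge a \prec_e b)$, which is $\Pi^0_1$; so $UHT$ is $\Pi^0_1$ relative to $TRO$. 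For (c), by Proposition \ref{prop9} a tree is weakly ultrahomogeneous iff its set of rank-$\geq 1$ nodes is finite; since ``$x$ has rank $\geq 1$'' is just ``$\exists y\,(x \prec_e y)$'' ($\Sigma^0_1$ given totality), finiteness of this set is $\exists N\,\forall x\,(\exists y\,(x \prec_e y) \to x < N)$, i.e. $\Sigma^0_2$, giving the relative $\Sigma^0_2$ bound.

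\emph{Completeness.} For (a) I would reduce the $\Pi^0_3$-complete set $\{e : \text{every column } W_e^{[n]} = \{x : \langle n,x\rangle \in W_e\} \text{ is finite}\}$. Build $\A_{h(e)}$ with root $\epsilon$; for each $n$ a distinguished \emph{cap} node $c_n$; and a chain of fresh nodes placed strictly between $\epsilon$ and $c_n$, appending one new node above the previous chain nodes of group $n$ each time an element enters the $n$-th column. Every natural number not used as root, cap, or chain node is declared a leaf child of $\epsilon$, so each number is assigned a role by the stage it is examined; hence $\phi_{h(e)}$ is total and the relation is always a partial order with least element in which $\{x : x \prec_e c_n\}$ is totally ordered. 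Thus $h(e) \in TRO$ iff every $\{x : x \prec_e c_n\}$ is finite iff every column is finite. For (b) I would reduce the $\Pi^0_1$-complete set $\{e : W_e = \emptyset\}$: take $\epsilon$ with infinitely many leaf children plus one distinguished child $a$; while $W_e$ stays empty, $a$ remains a leaf and the height is $1$, and the first time $W_e$ becomes nonempty we attach a child $b$ to $a$, raising the height to $2$. This is always a tree, so $h(e) \in TRO$, and it is ultrahomogeneous iff $W_e = \emptyset$. For (c) I would reduce $\{e : W_e \text{ finite}\}$: take $\epsilon$ with infinitely many leaf children, and each time a new element enters $W_e$ create a fresh child $a_k$ of $\epsilon$ with a single leaf child $b_k$, so $a_k$ has rank $\geq 1$. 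This is always a tree of height $\leq 2$, hence in $TRO$, and its rank-$\geq 1$ nodes are exactly $\epsilon$ together with the $a_k$, a finite set iff $W_e$ is finite; so by Proposition \ref{prop9} it is weakly ultrahomogeneous iff $W_e$ is finite.

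\emph{Main obstacle.} The only real subtlety is (a): I must keep the relation total and a genuine partial order at every finite stage while arranging that the \emph{sole} obstruction to treehood is the height-$\leq\omega$ finiteness clause, and that this obstruction tracks the columns exactly. The device that makes this work is the cap $c_n$, since an infinite column forces $\{x : x \prec_e c_n\}$ to be infinite, an honest failure of the finiteness requirement; without a cap an infinite chain would itself be a perfectly legal tree of height $\omega$ and would not move $e$ out of $TRO$. I must also ensure that numbers whose column has stopped growing are not left ``waiting'' to be appended, which would either stall totality or silently lengthen a chain; assigning every otherwise-unused number the fixed role of a leaf of $\epsilon$ resolves this cleanly. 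By contrast the toggles in (b) and (c) are routine, and checking that $h(e) \in TRO$ always holds and that the governing height/rank invariant flips exactly with the behavior of $W_e$ is immediate.
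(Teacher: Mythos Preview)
Your proposal is correct and aligns closely with the paper's approach. Parts (b) and (c) are essentially identical to the paper's constructions: toggle a single height-$2$ node on the emptiness of $W_e$, and spawn one rank-$\geq 1$ node per enumeration into $W_e$, respectively.

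For part (a) there is a minor but genuine difference worth noting. The paper reduces $COF$ to the \emph{complement} of $TRO$: below each odd node $2n+1$ it grows a chain whose length tracks how long a consecutive block $\{n,n+1,\dots\}$ sits inside $W_e$, so an infinite chain (and hence failure of the finiteness clause) appears precisely when $W_e$ is cofinite. You instead reduce the $\Pi^0_3$-complete set ``every column of $W_e$ is finite'' directly to $TRO$, with one cap $c_n$ per column and a chain below it lengthening each time that column receives an element. Both arguments use the same device---a fixed cap with an unbounded chain inserted beneath it, so that the sole tree axiom at risk is the finiteness of $\{x:x\prec a\}$---and both take care that the relation is decided monotonically at the stage each new element appears, so $\phi_{h(e)}$ is total and the partial-order and linearity-below axioms hold automatically. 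Your choice of complete set makes the reduction marginally more transparent (one chain per column, no merging of blocks), while the paper's has the virtue of reusing $COF$ as elsewhere in the article; neither buys anything substantive over the other.
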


\begin{proof} (a) We will give a reduction $h$ of the $\Sigma^0_3$ complete set $COF$ to the complement of $TRO$ as follows. The idea of the construction is to build a descending chain below $2n+1$
 if and only if  $m \in W_e$ for all $m \geq n$.  Thus if $W_e$ is cofinite, $T_{h(e)}$ will fail to be a tree; the construction will ensure that it is a tree otherwise.  First let the root $\epsilon= 0$ and for ease of notation let $T = T_{h(0)}$.  After stage $s$, we have a finite tree $T^s = \{0,1,\dots,2s\}$ with $0 \prec m+1$ for all $m \leq 2s$ and such that, for each $n < s$, if  $n,n+1,\dots,n+k \in W_{e,s}$ for some $k$, then there is a finite decreasing chain $2n+1, 2 i_1, \dots, 2 i_k$ in $T^s$.  At stage $s+1$, we first add the elements $2s+1, 2s+2$ to $T^s$ and let $0 \prec 2s+1, 2s+2$.  Next suppose that a new element $m$ comes into $W_e$ such that, for some $j$, $k$ and $n$,  $n,n+1,\dots,n+j-1 = m-1 \in W_{e,s}$ with a corresponding chain $2n+1, 2 i_1, \dots, 2i_{h-1}$ in $T^s$ and $m+1 = n+j+1,n+j+2,\dots,n+j+k-1 \in W_{e,s}$ with corresponding chain $2m+3,2p_1,\dots,2p_{k-1}$. Then we insert  $2s+2$ between these two chains, resulting in a descending chain from $2n+1$ of length $j+k-1$.  Note that if $j=0$, then the first chain is empty and if $k=0$, then the second chain is empty.  It follows that if $W_e$ is cofinite and includes $\{n,n+1,\dots\}$, then $T_{h(e)}$ will include an infinite descending chain and hence will not be a tree. On the other hand,
if $W_e$ is co-infinite, then for each $a$, $\{x: x \prec a\}$ will be finite, and will be totally ordered by the construction, so that $T_{h(e)}$ will be a tree.

 (b)  $\A_e$ is ultrahomogeneous if and only if there do not exist distinct $a$ and $b$, both not the root, such that $ a\prec_e b$.  It follows easily that $UHT$ is a $\Pi^0_1$ relative to $TRO$.  For the completeness, we define a computable function $f$ such that $\A_{f(e)}$ has an element of height 2 if and only if $W_e$ is nonempty. Note that $\{e: W_e = \emptyset\}$ is $\Pi^0_1$ complete.  To define $\A_{f(e)}$, simply let $0$ be the root, let $0 \prec s$ for all $s$ and, for $s>0$,  let $s \prec t$ if and only if $W_{e,t}$ is nonempty.  

(c)   $\A_e$ is weakly ultrahomogeneous if and only if there is a finite set of elements of rank $\geq 1$ and a $\Sigma^0_2$ formula may be given by quantifying over these finite sets. 
For the completeness, we define a reduction $g$ of the $\Sigma^0_2$ complete set $FIN  = \{e: W_e\ \text{is finite}\}$ to $WUT$ so that $g(e) \in TRO$ for all $e$. 
Again let $0$ be the root, let $0 \prec_{g(e)} x$ for all $x$ and let $2s+1 \prec_e 2s+2$ if an element comes into $W_e$ at stage $s$.  It is clear that $\A_{g(e)}$ is always a tree, and  will have finitely many nodes of rank $\geq 1$ if and only if $W_e$ is finite. 
\end{proof}

Next we will consider trees under the predecessor formulation. The notions of ultrahomogeneity turn out to be more complicated here. 

First an easy implication connecting the two formulations. 

\begin{prop} For any tree $T$,  if $(T,\prec)$ is (weakly) ultrahomogeneous, then $(T,f)$ is (weakly) ultrahomogeneous. Similarly, if $(T,\prec)$ is computably categorical, then $(T,f)$ is computably categorical.
\end{prop}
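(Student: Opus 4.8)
The plan is to exploit that on a tree of height $\leq \omega$ the partial order $\prec$ and the predecessor function $f$ are interdefinable, so that the two presentations share the same automorphisms and, suitably interpreted, the same isomorphisms of substructures. First I would record the two definability facts. The predecessor function is $\prec$-definable: $f(a)$ is the $\prec$-largest element of $\{x : x \prec a\}$ when $a \neq \epsilon$, and $f(\epsilon) = \epsilon$. Conversely $\prec$ is $f$-definable: since each $T(a)$ is finite (the standing assumption that $T$ has height $\leq \omega$), iterating $f$ from any $b$ reaches the root in finitely many steps and then stabilizes, and for $b \neq \epsilon$ one has $a \prec b$ iff $a = f^{(k)}(b)$ for some $k \geq 1$. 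From these it follows that a bijection $\sigma$ of $T$ is an automorphism of $(T,\prec)$ iff it is an automorphism of $(T,f)$: preserving $\prec$ forces $\sigma$ to map the $\prec$-maximum of $\{x : x \prec a\}$ to that of $\{x : x \prec \sigma(a)\}$, i.e. $\sigma(f(a)) = f(\sigma(a))$; and commuting with $f$ forces $\sigma$ to commute with every iterate $f^{(k)}$, hence to preserve $\prec$. Note each automorphism fixes $\epsilon$, the unique $\prec$-minimum, equivalently the unique fixed point of $f$.

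For the ultrahomogeneous claim I would argue as follows. A finitely generated substructure of $(T,f)$ is the closure of its generators under $f$, i.e. a finite $\prec$-downward-closed set, whereas $(T,\prec)$ is relational, so any finite subset is already a finitely generated substructure. Now suppose $(T,\prec)$ is ultrahomogeneous and let $\phi$ be an isomorphism between finitely generated substructures $\langle \vec x\rangle$ and $\langle \vec y\rangle$ of $(T,f)$. On a downward-closed set, $\prec$ is recovered from $f$ by the iteration above with all intermediate values staying inside the set, so $\phi$ is simultaneously a $\prec$-isomorphism between these two finite subsets of $(T,\prec)$. By ultrahomogeneity it extends to some $\sigma \in \mathrm{Aut}(T,\prec)$, which by the first paragraph lies in $\mathrm{Aut}(T,f)$ and extends $\phi$. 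For the weak version I would run the identical argument after adjoining constants for an exceptional set $E$ of $(T,\prec)$ and check that the same $E$ serves for $(T,f)$: the generated substructures become downward closures of $\vec x \cup E$, an $f$-isomorphism fixing $E$ is a $\prec$-isomorphism fixing $E$, and ultrahomogeneity of $(T,\prec,E)$ supplies an automorphism of $(T,\prec)$ fixing $E$, which is exactly an automorphism of $(T,f)$ fixing $E$.

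For computable categoricity I would start from a computable copy $(S,g)$ of $(T,f)$ and produce from it a computable copy of $(T,\prec)$. The induced order $\prec_S$ is computable: to decide $a \prec_S b$ one iterates $g$ on $b$ until the root (detected as the unique fixed point, reached in finitely many steps by finite height) and checks whether $a$ occurs among the strict iterates. Since any isomorphism $(T,f)\to(S,g)$ preserves immediate predecessors, it is also an isomorphism $(T,\prec)\to(S,\prec_S)$, so $(S,\prec_S)$ is genuinely a computable copy of $(T,\prec)$. Computable categoricity of $(T,\prec)$ then yields a computable isomorphism $\sigma : (T,\prec) \to (S,\prec_S)$; being an order isomorphism it preserves immediate predecessors, hence $\sigma(f(a)) = g(\sigma(a))$, so $\sigma$ is a computable isomorphism $(T,f) \to (S,g)$, establishing that $(T,f)$ is computably categorical.

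The main obstacle is the mismatch in generated substructures between the relational presentation $(T,\prec)$ and the functional presentation $(T,f)$: in the $f$-language a finitely generated substructure is the entire downward closure of the chosen generators, not merely those generators. The crux that dissolves this is the observation that on any downward-closed finite set, $\prec$ and $f$ are interdefinable with all witnesses internal to the set, which is precisely what lets an $f$-isomorphism be reinterpreted as a $\prec$-isomorphism and conversely lets a $\prec$-automorphism be read back as an $f$-automorphism. Everything else is bookkeeping, and the standing assumption that each $T(a)$ is finite is what makes every iteration of $f$ terminate and remain computable.
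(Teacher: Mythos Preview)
Your proof is correct and follows essentially the same route as the paper's: an $f$-isomorphism of finitely generated (downward-closed) substructures is a $\prec$-isomorphism, an automorphism of $(T,\prec)$ is an automorphism of $(T,f)$, and from a computable $(S,g)$ one computes $(S,\prec_S)$ and then transfers the computable isomorphism supplied by categoricity of $(T,\prec)$. The paper's argument is a terse sketch of exactly these points; your additional care in explaining why $\prec$ is recoverable from $f$ on downward-closed sets and why iterating $g$ to the fixed point makes $\prec_S$ computable simply fills in details the paper leaves implicit.
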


\begin{proof}  In general,  $(T,\prec)$ is relational, and $(T,f)$ is locally finite, so in either case every weakly homogeneous tree is computably categorical.  Assume that $(T,\prec)$ is weakly ultrahomogeneous and let $T_1$ and $T_2$ be finite isomorphic subtrees of $(T,f)$ including some fixed finite set $S$.  Then there is an isomorphism between $(T_1,\prec)$ and $(T_2,\prec)$ fixing $S$. Thus there is an automorphism of $(T,\prec)$ extending  this isomorphism and this is also an automorphism of $(T,f)$. Next assume that $(T,\prec) $ is computably categorical and that $(T,f)$ is computable, and let $(S,f)$ be a computable tree isomorphic to $(T,f)$. Then $(S,\prec)$ is also computable, and it follows that there is a computable isomorphism mapping $(T,\prec)$ to $(S,\prec)$ which also serves as an automorphism of $(T,f)$. 
\end{proof}

 In both cases, the converse fails to hold. For example, consider the tree $T = \{0^n: n \in \omega\}$ with $f(0^{n+1}) = 0^n$; this tree is homogeneous in the predecessor forumulation but has infinite height and is not computably categorical in the partial order presentation. It is important to note this distinction, that we will be considering trees of possibly infinite height in the predecessor formulation, whereas such trees could not be even  weakly ultrahomogeneous in the partial order formulation. 

\begin{theorem} \label{prop10} A tree $(T,f)$ with predecessor function is ultrahomogeneous if and only if, for every $n$ and any $a,b \in T$ of the same height, $a$ and $b$ have an equal number of successors. 
\end{theorem}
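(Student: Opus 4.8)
The plan is to first pin down what finitely generated substructures look like in the predecessor formulation and then prove each direction directly. Since $f$ is the predecessor function with $f(\epsilon)=\epsilon$, the substructure generated by a tuple $\vec{x}$ is its downward closure $\{f^k(x_i): k\geq 0,\ i\leq n\}$, a finite subtree that always contains the root. Any isomorphism $\varphi$ between two such substructures preserves $f$ and the root, and hence preserves height; I will use this repeatedly. The hypothesis on the right says precisely that the number of successors (children) of a node is a function $c(\cdot)$ of its height alone. I will treat the forward implication (ultrahomogeneous $\Rightarrow$ level-constant branching) and the backward implication separately.

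For the forward direction, fix $a,b\in T$ with $ht_T(a)=ht_T(b)=n$. Each generates a chain, $\langle a\rangle=\{a,f(a),\dots,\epsilon\}$ and $\langle b\rangle=\{b,f(b),\dots,\epsilon\}$, of $n+1$ elements, and the map $f^k(a)\mapsto f^k(b)$ is an $f$-isomorphism between them. By ultrahomogeneity it extends to an automorphism $\alpha$ of $T$ with $\alpha(a)=b$. Because $\alpha$ commutes with $f$, it carries the children of $a$ bijectively onto the children of $b$: if $f(c)=a$ then $f(\alpha(c))=\alpha(a)=b$, and applying the same to $\alpha^{-1}$ gives the reverse. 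Hence $a$ and $b$ have equally many successors.

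For the backward direction the key auxiliary fact is that, under the hypothesis, any two nodes $u,v$ of the same height have isomorphic rooted subtrees of descendants $T_{\geq u}$ and $T_{\geq v}$; one builds a height-preserving bijection level by level, choosing for each matched pair $w\mapsto w'$ an arbitrary bijection between their children, which exists since both have $c(ht_T(w))$ of them. Now, given an isomorphism $\varphi\colon T_1\to T_2$ of finitely generated substructures, I note that $T$ is partitioned into $T_1$ together with the subtrees $T_{\geq u}$, where $u$ ranges over those children of nodes of $T_1$ that lie outside $T_1$, and similarly for $T_2$. Since $\varphi$ is an $f$-isomorphism, for each $x\in T_1$ it restricts to a bijection between the children of $x$ inside $T_1$ and the children of $\varphi(x)$ inside $T_2$; as $x$ and $\varphi(x)$ share the same total number $c(ht_T(x))$ of children, the remaining (outside) children on the two sides are equinumerous. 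I then pick a bijection $\psi_x$ between these outside children and, using the auxiliary fact, extend $\varphi$ along each pair $(u,\psi_x(u))$ by an isomorphism $T_{\geq u}\to T_{\geq \psi_x(u)}$ sending $u$ to $\psi_x(u)$. The union of $\varphi$ with all these subtree isomorphisms is the desired automorphism: the pieces have disjoint domains that partition $T$, the map is a bijection onto $T$, and $f$ is preserved everywhere, the only boundary check being $f(\psi_x(u))=\varphi(x)=\alpha(f(u))$.

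I expect the main obstacle to be bookkeeping rather than anything conceptual. The two delicate points are verifying that $\varphi$ genuinely restricts to a bijection on the inside-children (this uses injectivity of $\varphi$ together with $f\circ\varphi=\varphi\circ f$), and handling the cardinal arithmetic when $c(ht_T(x))=\aleph_0$, where one must observe that removing a finite set from a countably infinite set of children still leaves $\aleph_0$ outside children on \emph{both} sides, so they remain equinumerous. Once the level-homogeneous subtree lemma is established, the extension is essentially forced and the verification that the glued map is an $f$-automorphism is routine.
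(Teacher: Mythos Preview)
Your proof is correct and follows essentially the same level-by-level extension strategy as the paper. Your forward direction is actually a bit cleaner than the paper's: you work directly with the chains $\langle a\rangle$ and $\langle b\rangle$ and read off the bijection on children from the resulting automorphism, whereas the paper argues by contrapositive and invokes the construction from the other direction. For the backward direction, the paper first extends $\varphi$ to an automorphism of the full finite-height truncation $T_{\le n}$ and then extends recursively, while you attach pendant subtrees $T_{\ge u}$ directly to $T_1$; these are two ways of organizing the same inductive extension, and your version makes the bookkeeping (including the infinite-branching case) more explicit.
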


\begin{proof} Suppose first that for every any $a,b \in T$ of the same height have an equal number of successors. 
Then it is clear that automorphism of $T$ may be defined recursively by first permuting the elements of height $1$ and then 
arbitrarily mapping successors of $x$ to successors of $\phi(x)$ for $x \in T$ of height $n$ to define $\phi$ on elements of height $n+1$. Now given finite
isomorphic subtrees $U_1$ and $U_2$  with all elements of height $\leq n$, the isomorphism $\phi$ may first be extended to  an isomorphism on the elements of height $\leq n$ , since there will always be an equal number of elements of any height which have not been mapped yet. 
Then the  isomorphism may be recursively extended to an automorphism $\Phi$ of $T$ 
by mapping a successor $x'$ of $x$ to a successor $y'$ of $\Phi(x)$.  

Suppose next that there is some $n$ and some elements $a,b \in T$ of height $n$ which have a different number of successors, where $n$ is the least for which such elements exist. Then there is an isomorphism  of the elements of height $\leq n$ mapping $a$ to $b$ by the argument above. But this isomorphism cannot be extended to an automorphism of $T$.
\end{proof}

Note that under the condition of  Proposition \ref{prop10}, there must be a function $\beta: \N \to \N \cup \{\omega\}$ such that every node of height $n$ has exactly $\beta(n)$ immediate successors.  An equivalent condition to saying that any two  nodes of height $n$ have the same number of successors,  is to say that for any $n$ and any two nodes $a$ and $b$ of height $n$, $T[a]$ is isomorphic to $T[b]$.

{\bf  Remark}: It follows from the proof of Proposition \ref{prop10} that if $T_1$ and $T_2$ are ultrahomogeneous with the same branching function $\beta$, then any isomorphism taking a finite subtree of $T_1$ to a finite subtree of $T_2$ can be extended to an isomorphism from $T_1$ to $T_2$. 

Given a tree $T$ (of possibly infinite height) and a subtree $U$ of finite height with node $a \in U$, let $T_U[a] = \{a\} \cup T(a) \cup  \bigcup \{T[x]: x \in T[a] \setminus U\}$. 
For example, if $T = \omega^{<\omega}$ and $U = \{x: x(0) > 1\}$, then $T_U[\epsilon] = \{\epsilon\} \cup \{x: x(0) \leq 1\}$ and $T_U[(2)] = \{\epsilon,(2)\}$. So $T_U[a]$ consists of the node $a$ and its predecessors, together with all nodes in $T$ extending successors of $a$ which are not in $U$.

\begin{theorem} \label{pwuh} A tree $(T,f)$ in the predecessor framework is weakly ultrahomogeneous if and only if there is a finite subtree $S$ of $T$ such that, for every $x \in S$, $T_S[x]$ is ultrahomogeneous. 
\end{theorem}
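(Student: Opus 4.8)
The plan is to prove both implications using the characterization of ultrahomogeneity for predecessor trees from Theorem \ref{prop10} (two nodes of the same height always have the same number of successors) together with the basic observation that in the predecessor framework a finitely generated substructure is exactly a finite subtree, i.e.\ a finite set of nodes together with all of their predecessors. Since $f$ is in the language, fixing the exceptional set pointwise forces an isomorphism to fix the finite subtree it generates, so I will freely replace the exceptional set by the (downward closed) subtree it generates.

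For the direction from right to left, suppose such a finite subtree $S$ exists and take the elements of $S$ as the exceptional set. Given an isomorphism $\phi$ between finite subtrees $T_1 \supseteq S$ and $T_2 \supseteq S$ that fixes $S$ pointwise, I would first assign to each $z \notin S$ its \emph{entry point} $a_z \in S$, the last node of $S$ on the path from the root to $z$. Because $\phi$ fixes $S$ and respects $f$, it carries each $z \in T_1 \setminus S$ to a node with the same entry point, so for every $a \in S$ it restricts to an isomorphism between the finite subtrees $T_1 \cap T_S[a]$ and $T_2 \cap T_S[a]$ of $T_S[a]$ that is the identity on the chain up to $a$. Since $T_S[a]$ is ultrahomogeneous, this restriction extends to an automorphism $\psi_a$ of $T_S[a]$ fixing that chain, hence permuting the nodes lying strictly above $a$ and outside $S$. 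Gluing all the $\psi_a$ with the identity on $S$ produces the automorphism $\Phi$ of $T$ extending $\phi$: the blocks ``above $a$, outside $S$'' for $a \in S$ partition $T \setminus S$, which makes $\Phi$ well defined, and one checks directly that it preserves $f$ and fixes $S$.

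For the converse, suppose $(T,f)$ is weakly ultrahomogeneous with exceptional set $E$, and let $S = \la E \ra$ be the finite subtree it generates. I claim each $T_S[x]$ is ultrahomogeneous, which by Theorem \ref{prop10} reduces to showing that any two nodes of the same height in $T_S[x]$ have the same number of successors. At heights at most the height of $x$ there is only the single chain node, so the content lies at greater heights, where the nodes of $T_S[x]$ sit inside full subtrees $T[x']$ hanging off successors $x' \notin S$ of $x$ and therefore have the same successors in $T_S[x]$ as in $T$. Arguing by contraposition, if two such nodes $u,v$ at a common height had different numbers of successors, then, since both paths leave $S$ exactly at $x$, the substructures $\la E, u \ra$ and $\la E, v \ra$ are each $S$ with a single chain appended above $x$; the map fixing $S$ and sending the $u$-chain to the $v$-chain is then an isomorphism fixing $E$. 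Weak ultrahomogeneity would extend it to an automorphism carrying $u$ to $v$, contradicting that automorphisms preserve successor counts.

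I expect the main obstacle to be the gluing step in the backward direction: one must verify that the locally defined maps $\psi_a$ fit together into a single automorphism rather than merely a family of partial maps. The two facts that make this work are that $S$ is downward closed, so that each $T_S[a]$ is genuinely a subtree of $T$ and the entry points are well defined and disjoint, and that $\phi$ already fixes the chain up to every $a \in S$, which forces each extension $\psi_a$ to fix that chain and hence to act only on the block above $a$ outside $S$; these two observations are exactly what guarantee compatibility of the pieces.
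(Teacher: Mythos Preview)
Your argument is correct. The backward direction is essentially the paper's proof: both restrict the given partial isomorphism to each $T_S[a]$, extend there by ultrahomogeneity, and glue. Your entry-point language makes explicit the fact the paper uses implicitly, namely that the sets $\{z\notin S:\text{the last $S$-node below $z$ is }a\}$, for $a\in S$, partition $T\setminus S$; this is exactly why the local automorphisms $\psi_a$ patch together. One small remark: your phrase ``strictly above $a$ and outside $S$'' should be read as ``the part of $T_S[a]$ strictly above $a$'' (which automatically lies outside $S$), not as $\{z:z\succ a,\ z\notin S\}$, since the latter sets are nested rather than disjoint. Your subsequent partition claim makes clear you intend the former.

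For the forward direction you take a genuinely different route from the paper. The paper argues directly: given an isomorphism $\phi$ between finite subtrees $U_1,U_2$ of $T_S[x]$, it extends $\phi$ by the identity on the rest of $S$, applies weak ultrahomogeneity of $T$ to obtain a global automorphism fixing $S$, and observes that such an automorphism must preserve $T_S[x]$ and hence restricts to an automorphism of $T_S[x]$ extending $\phi$. You instead invoke Theorem~\ref{prop10} and verify the branching criterion level by level, using a single test pair $u,v$ and a contrapositive. The paper's route is more self-contained (it does not cite Theorem~\ref{prop10}) and handles an arbitrary partial isomorphism in one stroke; your route is more concrete and makes visible exactly which feature of weak ultrahomogeneity is being used, namely that any two nodes of $T$ with the same height and the same $S$-entry point are conjugate by an automorphism fixing $S$. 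Either approach is fine here.
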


\begin{proof}   Suppose first that $(T,f)$ is weakly ultrahomogeneous and let $S$ be a finite exceptional tree.  Let $x \in S$ and let $\phi$ be an isomorphism between two finite subtrees $U_1$ and $U_2$ of $T_S[x]$; extend $\phi$ to the other elements of $S$ (in particular the immediate successors of $a$ which are in $S$), by the identity  and let $T_1 =  S \cup U_1 \cup \{y: y \prec x\}$ and  $T_2 = S \cup  U_2 \cup \{y: y \prec x\}$.  Then $T_1$ and $T_2$ are finite subtrees of $T$ and, since $S$ is an exceptional set, $\phi$ may be extended to an automorphism of $T$ which fixes $x$. It is clear that the restriction of this automorphism to $T[x]$ is the desired automorphism of $T_S[x]$. 

Next suppose that $S$ is a finite subtree as specified; we will show that $S$ is an exceptional set.  Let $\phi$ be an isomorphism of two finite subtrees $R_1$ and $R_2$ of $T$ which fixes $S$. It follows that, for each $x \in S$, $\phi(x) = x$ and hence the map induced by $\phi$ on $T[x]$ is an isomorphism between $R_1[x]$ and   $R_2[x]$.  By assumption, there exist, for each $x \in S$, an automorphism $\phi_x$ on $T[x]$ which extends the map between $R_1[x]$ and $R_2[x]$. 
These can be patched together to define an automorphism $H$ of $T$ which preserves $S$ and extends $\phi$. That is, $H(x \fr y) = \phi_x(y)$ when $x \fr y \notin S$ and equals $x \fr y$ otherwise. 
\end{proof}

Let us consider this further for trees of height $\leq 3$. 

\begin{proposition} \label{2wuht}
A tree of height $\leq 2$ is weakly ultrahomogeneous if and only if all but finitely many nodes of height 1 have an equal number of successors.
\end{proposition}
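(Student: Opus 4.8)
The plan is to deduce both directions from the two characterizations already in hand: Theorem~\ref{pwuh}, which says $(T,f)$ is weakly ultrahomogeneous exactly when some finite subtree $S$ makes every $T_S[x]$ (for $x \in S$) ultrahomogeneous, and Theorem~\ref{prop10}, which says a tree is ultrahomogeneous exactly when any two nodes of the same height have equally many successors. The governing observation is that in a tree of height $\leq 2$ the only height at which successor counts can differ is height $1$: the single root sits at height $0$, and every node of height $2$ is a leaf with no successors. So in any local tree $T_S[x]$, \emph{ultrahomogeneity is equivalent to all of its height-$1$ nodes having the same number of successors}, and I only need to track which height-$1$ nodes of $T$ survive into each $T_S[x]$.

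For the forward direction I would take $T$ weakly ultrahomogeneous, apply Theorem~\ref{pwuh} to obtain a finite exceptional subtree $S$, and note that we may assume the root $\epsilon \in S$ (every nonempty subtree contains the root, and a finite exceptional subtree may be enlarged). The crucial step is to read off $T_S[\epsilon]$: since $T(\epsilon) = \emptyset$ and $T[\epsilon] \setminus S$ consists precisely of the height-$1$ nodes of $T$ lying outside $S$, the tree $T_S[\epsilon]$ is the root together with these nodes and their leaves, each height-$1$ node keeping its full successor count (its subtree in $T$ is just its leaves). As $T_S[\epsilon]$ is ultrahomogeneous, Theorem~\ref{prop10} forces all these height-$1$ nodes to share one common successor count; and since $S$ is finite, only finitely many height-$1$ nodes of $T$ are excluded. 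Hence all but finitely many height-$1$ nodes of $T$ have equal successor count.

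For the converse, suppose cofinitely many height-$1$ nodes share a successor count $k \in \N \cup \{\omega\}$, and let $V_0$ be the finite set of height-$1$ nodes whose count differs from $k$ (if there are only finitely many height-$1$ nodes, take $V_0$ to be all of them, making $k$ irrelevant). I would set $S = \{\epsilon\} \cup V_0$, which is a finite subtree, and verify the hypothesis of Theorem~\ref{pwuh} node by node. For $x = \epsilon$, $T_S[\epsilon]$ is the root together with the height-$1$ nodes outside $V_0$ and their leaves; every such node has exactly $k$ successors, so $T_S[\epsilon]$ is ultrahomogeneous by Theorem~\ref{prop10}. For $x \in V_0$, $T_S[x]$ is just the path $\epsilon \prec x$ together with the leaves below $x$, a tree with a single height-$1$ node, hence trivially ultrahomogeneous. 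Theorem~\ref{pwuh} then yields that $T$ is weakly ultrahomogeneous.

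The argument has no deep obstacle; the care lies entirely in correctly extracting the height-$1$ nodes of each $T_S[x]$ from the definition of $T_S[x]$ and in confirming that heights $0$ and $2$ impose no conditions. The only genuine book-keeping is the boundary cases---$T$ of height $< 2$, or only finitely many height-$1$ nodes---where the common value $k$ is either vacuous or undefined; these are handled by letting $V_0$ absorb all height-$1$ nodes, so that in the forward direction the conclusion is automatic and in the backward direction $T_S[\epsilon]$ degenerates to the root alone, which is ultrahomogeneous.
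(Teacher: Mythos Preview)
Your proposal is correct and follows essentially the same route as the paper's own proof: both directions are obtained by invoking Theorem~\ref{pwuh} and then analyzing $T_S[\epsilon]$ (and, for the converse, $T_S[x]$ for $x$ of height $1$) via the characterization of ultrahomogeneity in Theorem~\ref{prop10}. Your treatment of the boundary cases (height $<2$, finitely many height-$1$ nodes) is slightly more explicit than the paper's, and your remark about enlarging $S$ is unnecessary since any nonempty subtree already contains the root---but none of this affects correctness.
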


\begin{proof} Given $k$ such that all but finitely many nodes of height 1 have exactly $k$ successors, let $S$ be the subtree consisting of the root $\epsilon$ together with the set of nodes of height 1 which have a different number of successors. Then $T_S[\epsilon]$ consists of $\epsilon$ together with all nodes of height 1 having exactly $k$ successors and each of their $k$ successors. Thus $T_S[\epsilon]$ is ultrahomogeneous.  For $x \in S$ of height 1, $T_S[x] = S[a]$, that is a tree of height 1 consisting of $x$ together with all of its successors which is trivially ultrahomogeneous.

For the other direction, suppose that $T$ is weakly ultrahomogeneous, and let $S$ be given by Theorem \ref{pwuh} so that $T_S[x]$ is ultrahomogeneous for all $x \in S$.   Then $T_S[\epsilon]$ will include each node $x$ of height 1 not in $S$ and having no successor in $S$, together with all of their successors. Since $T_S[\epsilon]$ is ultrahomogeneous, it follows that all nodes of height 1 not in $S$ will have the same number of successors, as desired. 
\end{proof}

\begin{proposition} \label{3wuht}
A tree of height 3 is weakly ultrahomogeneous if and only if the following conditions hold: 
\begin{itemize}
\item[(a)] for each node $x$ of height 1, all but finitely many successors of $x$ have an equal number of successors;
\item[(b) ]there are fixed $h$ and $k$ in $\omega \cup \{\omega\}$ such that all but finitely many nodes of height 1 have exactly $h$ successors and each of those successors has exactly $k$ successors. 
\end{itemize}
\end{proposition}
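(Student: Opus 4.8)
The plan is to reduce everything to Theorem \ref{pwuh}, which says $(T,f)$ is weakly ultrahomogeneous exactly when there is a \emph{finite subtree} $S\subseteq T$ (necessarily containing the root $\epsilon$) with $T_S[x]$ ultrahomogeneous for every $x\in S$. I adopt the convention that $\epsilon$ has height $0$, so a height-$3$ tree has height-$1$ children of the root, height-$2$ grandchildren, and height-$3$ leaves. The strategy is to compute $T_S[x]$ for each possible height of $x$, convert ``$T_S[x]$ is ultrahomogeneous'' into a purely combinatorial statement via Theorem \ref{prop10}, and then match the resulting conditions against (a) and (b). This is the height-$3$ analogue of Proposition \ref{2wuht}, one level deeper.

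First I would carry out the height analysis. Recall that $T_S[x]$ consists of $x$, its predecessors, and, for each immediate successor $s$ of $x$ with $s\notin S$, the entire subtree $T[s]$; immediate successors lying in $S$ are discarded and handled at their own node of $S$. Since $S$ is closed under predecessors, a successor outside $S$ carries no part of $S$ above it, so these subtrees split off cleanly. Applying Theorem \ref{prop10} level by level, ultrahomogeneity of $T_S[\epsilon]$ is equivalent to the conjunction of \textbf{(A)} all height-$1$ nodes not in $S$ share a common number $h$ of successors, and \textbf{(B)} all height-$2$ nodes lying below such height-$1$ nodes share a common number $k$ of successors. Ultrahomogeneity of $T_S[x]$ for a height-$1$ node $x\in S$ is equivalent to \textbf{(C)} all height-$2$ successors of $x$ outside $S$ share a common number of successors; and for $x\in S$ of height $2$ or $3$, $T_S[x]$ has at most one node at each height, hence is automatically ultrahomogeneous and imposes nothing. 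Thus weak ultrahomogeneity reduces to the existence of a finite subtree $S\ni\epsilon$ satisfying (A), (B), (C).

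For the direction (a),(b) $\Rightarrow$ weak ultrahomogeneity I would construct $S$ explicitly. Let $E_1$ be the set of \emph{non-standard} height-$1$ nodes, i.e.\ those failing to have exactly $h$ successors each with exactly $k$ successors; by (b) this set is finite. For each $x\in E_1$, condition (a) yields a common count $m_x$ attained by all but a finite set $F_x$ of the successors of $x$ (and note $m_x$ need \emph{not} equal $k$, since (C) only asks for uniformity within $T_S[x]$). Put $S=\{\epsilon\}\cup E_1\cup\bigcup_{x\in E_1}F_x$, which is closed under predecessors and finite, hence a finite subtree. The height-$1$ nodes outside $S$ are standard, giving (A) with value $h$ and (B) with value $k$; for each $x\in E_1\subseteq S$ the successors of $x$ outside $S$ are exactly those with $m_x$ successors, giving (C); and standard height-$1$ nodes are kept out of $S$, so (C) never applies to them. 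Conversely, given any finite $S$ with (A),(B),(C), condition (b) is immediate from (A), (B) and the finiteness of $S$, and (a) follows by cases: for $x\notin S$ all successors have $k$ successors by (B), while for $x\in S$ all but the finitely many successors lying in $S$ share a common count by (C).

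The hard part will be the height-$1$ analysis of $T_S[x]$ and the reconciliation of the \emph{global} condition (b) with the \emph{per-node} condition (a): one must verify that placing only the non-standard height-$1$ nodes (and their finite exceptional successor sets) into $S$ suffices and forces no standard nodes in, and that the two ``all but finitely many'' clauses line up so that (A)--(C) hold simultaneously. A secondary point to track is that $h$ and $k$ may be $\omega$, but since Theorem \ref{prop10} states ultrahomogeneity as equality of the number of successors (with $\aleph_0$ allowed), the argument is uniform across the finite and infinite cases.
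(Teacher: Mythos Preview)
Your approach is essentially the same as the paper's: both directions go through Theorem~\ref{pwuh}, you construct the same exceptional subtree $S=\{\epsilon\}\cup E_1\cup\bigcup_{x\in E_1}F_x$, and your conditions (A), (B), (C) are exactly what the paper verifies about $T_S[\epsilon]$ and $T_S[x]$ for height-$1$ $x\in S$. One small slip: for $x\in S$ of height $2$, $T_S[x]$ need not have at most one node at each height, since the height-$3$ successors of $x$ not in $S$ all appear; however, these are leaves with zero successors each, so $T_S[x]$ is still ultrahomogeneous by Theorem~\ref{prop10}, and your conclusion stands.
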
 

\begin{proof} Assuming conditions (a) and (b), let $S$ consist of $\epsilon$ together with the following: Let $x$ be a node of  height 1 
which does not have exactly $h$ successors, or which has exactly $h$ successors but some of those successors do not have exactly $k$ successors. Then,  by (a), fix $m$ such that all but finitely many successors of $x$ have exactly $m$ successors; then put $x$ into $S$  together with each of its successors which do not have exactly  $m$ successors. Then $T_S[\epsilon]$ consists of $\emptyset$ together with all nodes $x$ of height 1 such that $x$ has exactly $h$ successors and each of these has exactly $k$ successors. Thus $T_S[\epsilon]$ is ultrahomogeneous. 

 For $x \in S$ of height 1 such that all but finitely many successors of $x$ have exactly $m$ successors, $T_S[x]$ consists of $x$ together with each successor which has exactly $m$ successors, and again $T_S[x]$ is ultrahomogeneous. 

For the other direction, suppose that $T$ is weakly ultrahomogeneous, and let the finite subtree $S$ be given by Theorem \ref{pwuh} so that $T_S[x]$ is ultrahomogeneous for all $x \in S$. Then $T_S[\epsilon]$ will contain each node $x$ of height 1 with no extensions in $S$, together with all extensions of $x$.  Since $T_S[\epsilon]$ is ultrahomogeneous, it follows from Proposition \ref{prop10} that all nodes of height 1 not in $S$ will have the same number ($h$) of successors,and that each of these successors will have the same number ($k$) of successors.  Next consider $x \in S$ of height 1. Then $T_S[x]$ will contain all successors $y$ of $x$ which are not in $S$, together with all extensions of $y$.  Since $T_S[x]$ is ultrahomogeneous, it follows that all successors of $x$ not in $S$ will have an equal number of successors, as desired. 
\end{proof}

Certainly there are computably categorical trees,  in either presentation,  which are not weakly ultrahomogeneous.

\begin{example}
Let $T$ have infinitely many nodes of height 1 with exactly 2 successors and infinitely many with exactly 3 successors, and then let each node of a pair of successors  have exactly 4 successors and each node of a triple of successors have exactly 1 successor. It can be checked that this tree is of strongly finite type and is therefore computably categorical.  On the other hand, $T$ is not weakly ultrahomogeneous in either presentation. 
\end{example}

As for injection structures, there are continuum many ultrahomogeneous trees $(T,f)$.

\begin{proposition} 
 For any function $\beta: \omega \to \omega \cup\{\omega\}$, there is an ultrahomogeneous tree $(T,f)$ with branching function $\beta$ and furthermore $(T,f)$ is relatively computably categorical.  
\end{proposition}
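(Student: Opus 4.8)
The plan is to build $(T,f)$ as an explicit subtree of $\omega^{<\omega}$ whose branching matches $\beta$, verify ultrahomogeneity through the characterization in Theorem \ref{prop10}, and then derive relative computable categoricity from local finiteness via Theorem \ref{wmain}.

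First I would define $T \subseteq \omega^{<\omega}$ by declaring $\epsilon \in T$ and, for any $w \in T$ with $|w| = n$, putting $w \fr (i) \in T$ exactly for those $i < \beta(n)$ (reading $i < \omega$ as ``all $i \in \omega$'' in case $\beta(n) = \omega$). The predecessor function is the obvious one, $f(w \fr (i)) = w$ and $f(\epsilon) = \epsilon$. By construction every node of height $n$ has exactly $\beta(n)$ immediate successors, so $\beta$ is the branching function of $T$; note that degenerate cases such as $\beta(n) = 0$ are handled automatically, since then nodes of height $n$ are simply leaves. As any two nodes of the same height $n$ now have the same number $\beta(n)$ of successors, Theorem \ref{prop10} immediately yields that $(T,f)$ is ultrahomogeneous.

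The key observation for the categoricity claim is that $(T,f)$ is locally finite, even when $\beta$ takes the value $\omega$ or when the tree has infinite height. Indeed, the only operation of the structure is the unary predecessor $f$, and iterating $f$ from any element reaches the root $\epsilon$ in finitely many steps (each node has finite height); hence the substructure generated by a finite set $\{a_1,\dots,a_k\}$ is just $\{f^m(a_i) : m \geq 0,\ i \leq k\}$, which is finite. The point requiring care is exactly this: infinite branching does \emph{not} threaten local finiteness, because finitely generated substructures grow only toward the root and never absorb the (possibly infinitely many) successors of a node, and infinite height does not threaten it either, since every individual node still has a finite chain of predecessors.

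Given local finiteness, the categoricity is essentially free: an ultrahomogeneous structure is weakly ultrahomogeneous (take the empty exceptional set), so $(T,f)$ is a locally finite weakly ultrahomogeneous structure, and Theorem \ref{wmain}(2) gives that it is relatively computably categorical. I do not expect a serious obstacle beyond the local-finiteness check just described. If one preferred a self-contained argument in place of citing Theorem \ref{wmain}, one could instead run the back-and-forth of the Remark following Theorem \ref{prop10}, using the fact that isomorphism of finite subtrees in the predecessor language is decidable in order to locate matching elements effectively relative to the diagrams of $(T,f)$ and of the given copy.
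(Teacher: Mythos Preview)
Your argument is correct and matches what the paper intends: the paper states this proposition without proof, but the ingredients you invoke are exactly those the paper has set up. Earlier in the same section the paper remarks that ``$(T,f)$ is locally finite, so in either case every weakly homogeneous tree is computably categorical,'' and your proof makes this explicit---the substructure generated by finitely many nodes is just their finite downward closure---before appealing to Theorem~\ref{wmain}(2) and Theorem~\ref{prop10}. The explicit construction of $T$ as $\{w \in \omega^{<\omega} : w(n) < \beta(n)\ \text{for all}\ n < |w|\}$ and the observation that infinite branching and infinite height are harmless for local finiteness (since generation only proceeds toward the root) are exactly the missing details the paper elides.
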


A tree $T$ is said to be \emph{rank-homogeneous} \cite{CKM} if it satisfies the following conditions for all $n \in \omega$ and all $a \in T$ of height $n$:
\begin{enumerate}
\item for all ordinals $\sigma < \alpha=rk(a)$, if some $b$ of height $n+1$ has rank $\sigma < \alpha$, then $a$ has infinitely many successors of rank $\sigma$. 
\item If $rk(a) = \infty$, then $a$ has infinitely many successors of rank $\infty$.
\end{enumerate}

Let $R_n(T) = \{rk(a): height(a) = n\}$. The following results can be found in \cite{CKM}.

\begin{prop} [CKM] Suppose $T$ and $T'$ are  rank homogeneous trees such that $R_n(T) = R_n(T')$ for all $n$. Then $T$ and $T'$ are isomorphic. 
\end{prop}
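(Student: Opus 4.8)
The plan is to construct an isomorphism $T \to T'$ by a back-and-forth argument, maintaining at each stage a finite partial map $p \colon F \to F'$ between finite subtrees of $T$ and $T'$ (each closed downward under $f$, i.e. under taking predecessors) that is a tree isomorphism onto its image and that \emph{preserves rank}: $rk_T(a) = rk_{T'}(p(a))$ for every $a \in F$. Since rank is an isomorphism invariant, preservation of rank is necessary; the content is that rank-homogeneity together with the hypothesis $R_n(T) = R_n(T')$ makes it always possible to extend. I would begin by mapping the root $\epsilon$ of $T$ to the root $\epsilon'$ of $T'$; because $R_0(T) = \{rk_T(\epsilon)\}$ and $R_0(T') = \{rk_{T'}(\epsilon')\}$ are equal (height $0$ contains only the root), the base map already preserves rank.

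The heart of the argument is the extension step (the ``forth'' direction; ``back'' is symmetric with the roles of $T, T'$ reversed). Given $p \colon F \to F'$ as above and an element $x \in T \setminus F$, let $\epsilon = a_0 \prec a_1 \prec \dots \prec a_m = x$ be the predecessor chain of $x$, and let $a_j$ be its deepest node lying in $F$. Since $F$ is closed under predecessors, none of $a_{j+1}, \dots, a_m$ lie in $F$, so it suffices to define images $b_{j+1}, \dots, b_m$ where $b_i$ is a successor of $b_{i-1} = p(a_{i-1})$, chosen outside $F'$ (and distinct from one another, which is automatic as they sit at distinct heights), with $rk_{T'}(b_i) = rk_T(a_i)$. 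Setting $p(a_i) = b_i$ then yields a rank-preserving partial tree isomorphism containing $x$, since $f(b_i) = b_{i-1} = p(f(a_i))$.

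The existence of each $b_i$ is where rank-homogeneity and the equality of rank sets are used together. Suppose $b_{i-1}$ has been found with $rk_{T'}(b_{i-1}) = rk_T(a_{i-1}) =: \alpha$ and height $n := ht(a_{i-1}) = ht(b_{i-1})$, and write $\sigma := rk_T(a_i)$, so $a_i$ has height $n+1$ in $T$. Because rank strictly decreases along successors when it is an ordinal, if $\alpha$ is an ordinal then $\sigma < \alpha$ is an ordinal as well; and if $\sigma = \infty$ then $a_{i-1} \prec a_i$ forces $\alpha = \infty$. In the case $\sigma < \infty$ we have $\sigma < \alpha$ and, since $\sigma = rk_T(a_i) \in R_{n+1}(T) = R_{n+1}(T')$, some node of $T'$ of height $n+1$ has rank $\sigma$; clause (1) of rank-homogeneity of $T'$ then forces $b_{i-1}$ to have infinitely many successors of rank $\sigma$. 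In the case $\sigma = \infty$, clause (2) gives $b_{i-1}$ infinitely many successors of rank $\infty$. Either way there are infinitely many candidate successors of the correct rank, and since $F'$ is finite we may pick one, $b_i$, outside $F'$.

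Finally, I would enumerate $T$ and $T'$ and alternate the forth and back steps, at each stage adjoining the least not-yet-handled element of $T$ (resp. $T'$) together with its missing ancestors. The union $\theta = \bigcup p$ is then defined on all of $T$, surjects onto $T'$, preserves $f$, and is injective, hence is an isomorphism. I expect the main obstacle to be precisely the bookkeeping in the extension step: one must verify that invoking clause (1) is legitimate, which requires both that $\sigma < \alpha$ (from the strict decrease of rank) and that $\sigma$ actually occurs at height $n+1$ in $T'$ (which is exactly where $R_{n+1}(T) = R_{n+1}(T')$ enters), and one must keep the partial maps genuine tree isomorphisms by adding whole predecessor chains and selecting fresh images using the infinitude of suitable successors guaranteed by rank-homogeneity.
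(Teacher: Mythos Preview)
Your back-and-forth argument is correct and is the standard proof of this result. Note, however, that the paper does not actually supply its own proof of this proposition: it is stated with attribution to Calvert--Knight--Millar \cite{CKM} and the reader is referred there. So there is no ``paper's proof'' to compare against; what you have written is essentially the argument one would expect to find in \cite{CKM}, carried out carefully. Your handling of the case split on whether $\sigma$ and $\alpha$ are ordinals or $\infty$ is accurate, and you correctly identify that the hypothesis $R_{n+1}(T)=R_{n+1}(T')$ is exactly what is needed to trigger clause~(1) of rank-homogeneity on the $T'$ side.
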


\begin{prop}[CKM] If $T$ is a rank-homogeneous tree, then for any tuples $\overbar{a} = (a_1,\dots,a_k)$ and $\overbar{b} = (b_1,\dots,b_k)$, there is an automorphism of $T$ taking $\overbar{a}$ to $\overbar{b}$ if and only if the function taking to $\overbar{a}$ to $\overbar{b}$ extends to a rank-preserving  isomorphism from the finite subtree generated by $\overbar{a}$ to the finite subtree generated by $\overbar{b}$.
\end{prop}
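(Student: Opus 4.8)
The plan is to treat the two directions separately, the forward one being routine and the converse being a back-and-forth construction driven by rank-homogeneity. For the forward direction, suppose $\Phi$ is an automorphism of $(T,f)$ with $\Phi(a_i)=b_i$ for each $i$. Since $\Phi$ commutes with the predecessor function $f$, it carries the downward closure $\langle\overbar{a}\rangle=\{f^{j}(a_i): i\leq k,\ j\in\omega\}$ bijectively onto $\langle\overbar{b}\rangle$, and since $rk$ is defined purely from the tree structure it is invariant under any automorphism. Hence $\Phi\restr\langle\overbar{a}\rangle$ is the desired rank-preserving isomorphism of the generated (finite) subtrees; this already shows that rank-preservation is a necessary condition.

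For the converse, let $\phi:\langle\overbar{a}\rangle\to\langle\overbar{b}\rangle$ be a rank-preserving isomorphism with $\phi(a_i)=b_i$; note that such a $\phi$ automatically sends $\epsilon$ to $\epsilon$ and preserves heights. I would fix an enumeration $t_0,t_1,\dots$ of $T$ and build an increasing chain $\phi=\psi_0\subseteq\psi_1\subseteq\cdots$ of rank-preserving isomorphisms between downward-closed finite subtrees of $T$, arranging at stage $2s+1$ that $t_s$ is placed in the domain and at stage $2s+2$ that $t_s$ is placed in the range. The union $\Phi=\bigcup_s\psi_s$ is then a rank-preserving bijection of $T$ commuting with $f$, i.e.\ an automorphism extending $\phi$, so $\Phi(a_i)=b_i$ as required.

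The heart of the matter is the single forth-step. Suppose $\psi$ is defined on a downward-closed finite $U'$ with image $V'$, that $p\in U'$ has height $n$ with $\psi(p)=q$ (so $rk(q)=rk(p)=:\alpha$, as $\psi$ is rank-preserving), and that we wish to adjoin an immediate successor $x$ of $p$, of rank $\sigma$, to the domain. Since $x$ is a successor of $p$, either $\sigma$ is an ordinal with $\sigma<\alpha$, or $\sigma=\infty$ and then $\alpha=\infty$ as well; in both cases $x$ itself witnesses that some node of height $n+1$ has rank $\sigma$. If $\sigma$ is an ordinal, clause (1) of rank-homogeneity applied to $q$ (of height $n$ and rank $\alpha>\sigma$) yields infinitely many successors of $q$ of rank $\sigma$; if $\sigma=\infty$, clause (2) applied to $q$ yields infinitely many successors of $q$ of rank $\infty$. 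Either way only finitely many of these lie in the finite set $V'$, so I may pick a successor $y$ of $q$ with $rk(y)=\sigma$ and $y\notin V'$ and set $\psi(x)=y$; this preserves being a rank-preserving isomorphism, since $f(y)=q=\psi(f(x))$. To drive a specified $t_s$ into the domain one adjoins, in increasing order of height, the finitely many nodes $f^{j-1}(t_s),\dots,f(t_s),t_s$ lying above the least ancestor $f^{j}(t_s)$ already in $U'$, each by this step. The back-step is verbatim symmetric, using that $T$ is rank-homogeneous on the range side too.

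The main obstacle, and the whole point of the hypothesis, is exactly this extension step: one must guarantee that $q$ always has a successor of the correct rank $\sigma$ avoiding the finitely many images already committed. Rank-homogeneity supplies this by forcing the set of successors of $q$ of each realized rank to be infinite, with its two clauses matching precisely the two cases $\sigma<\infty$ and $\sigma=\infty$. Rank-preservation of $\phi$ is used both to launch the induction and to license each invocation of a clause, since a clause can be applied only when the source node $p$ and the target node $q$ have equal rank.
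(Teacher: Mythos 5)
Your proof is correct. The paper does not actually prove this proposition---it is quoted from Calvert, Knight and Millar \cite{CKM} without proof---and your back-and-forth construction, in which rank-preservation of the partial isomorphism licenses each application of the two rank-homogeneity clauses (clause (1) for ordinal rank $\sigma$, with the node being copied serving as the height-$(n+1)$ witness, and clause (2) for rank $\infty$) to supply infinitely many candidate successors and hence one avoiding the finite set already used, is essentially the standard argument given there.
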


Despite this close analogy with the notion of ultrahomogeneity, rank-homogeneous trees are not necessarily ultrahomogeneous. For example, consider the tree $T$ containing all nodes $(n)$ of length 1 and such that $(2^n \cdot (2m+1))$ has exactly $n$ immediate successors for each $m$ and $n$. Then $T$ is certainly rank homogeneous, but is not weakly ultrahomogeneous.

On the other hand, consider the tree $T$ which has only two nodes $(0)$ and $(1)$ of height 1 and then has all possible successors of these. Then $T$ is ultrahomogeneous but is not rank-homogeneous.

Finally we consider index sets for trees under the predecessor presentation.  The $e$th tree $\B_e = (\omega, \phi_e, \epsilon)$ is given by the
$e$th partial recursive function $\phi_e$ when $\phi_e$ is total and $\phi_e$ is the predecessor function of a tree.   For a tree with  predecessor function $f$, for any element $a$,
$\{x: x \prec a\} = \{f^n(a): n \in \omega\}$, and this set is ordered by having $f^m(a) \prec  f^n(a) \iff m > n$.  Hence $(\omega,f)$ is a tree provided that 

\[
(\forall a)(\exists n)[ f^n(a) = \epsilon]\ \&\ (\forall a)(\forall m \neq n)[f^m(a) = f^n(a) \ \Longrightarrow\ f^n(a) = \epsilon].
\]

It follows that $TRP = \{e: \A_e \ \text{is a tree}\}$ is a $\Pi^0_2$ set. 

 Let $UHP= \{e: \A_e\ \text{is an ultrahomogeneous tree}\}$, and let 
$WUP= \{e: \A_e\ \text{is a weakly ultrahomogeneous tree }\}$.

\begin{theorem}
\begin{enumerate}
\item[(a)] The index set $UHP$ is $\Pi^0_2$ complete relative to $TRP$. 
\item[(b)] The index set $WUP$ is $\Sigma^0_3$ complete relative to $TRP$. 
\end{enumerate}
\end{theorem}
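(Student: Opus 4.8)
The plan is to derive each upper bound from the structural characterizations already proved and each completeness from a stage-by-stage construction of a height-$2$ tree. For the upper bounds, note first that for a node $a$ of a computable tree the predicate ``$a$ has at least $k$ immediate successors'' is $\Sigma^0_1$, so ``$a$ and $b$ have the same number of immediate successors'' can be written as $\forall k\,[\,a \text{ has} \geq k \text{ successors} \iff b \text{ has} \geq k \text{ successors}\,]$, a universal quantifier over a Boolean combination of $\Sigma^0_1$ predicates, hence $\Pi^0_2$. Since the height of a node is computable once $\phi_e$ is total, Theorem \ref{prop10} lets me write ``$\A_e$ is ultrahomogeneous'' as $\forall a,b\,[\,\mathrm{ht}(a)=\mathrm{ht}(b)\Rightarrow a,b \text{ have equally many successors}\,]$, which is $\Pi^0_2$; intersecting with $TRP$ keeps it $\Pi^0_2$, giving the upper bound in (a). For (b), Theorem \ref{pwuh} says $\A_e$ is weakly ultrahomogeneous iff there is a finite subtree $S$ with $T_S[x]$ ultrahomogeneous for every $x\in S$. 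Membership in $T_S[x]$ is computable from $\phi_e$ and a finite code for $S$, so ``$T_S[x]$ is ultrahomogeneous'' is $\Pi^0_2$ exactly as above, a finite conjunction over $x\in S$ remains $\Pi^0_2$, and prefixing the existential quantifier over codes of finite subtrees $S$ yields a $\Sigma^0_3$ predicate, so $WUP\in\Sigma^0_3$.

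For the completeness in (a) I would reduce the $\Pi^0_2$-complete set $INF$ to $UHP$ by building in stages a height-$2$ tree $\A_{f(e)}$ whose root $\epsilon$ has $\omega$ height-$1$ children $v_0,v_1,\dots$, all height-$2$ nodes being leaves. I attach one child to $v_0$ at the start, and whenever the enumeration of $W_e$ reaches $t$ elements I attach a new leaf child to each of $v_0,\dots,v_{t-1}$. If $W_e$ is infinite, every $v_n$ acquires $\omega$ children, so all height-$1$ nodes have equally many successors and $\A_{f(e)}$ is ultrahomogeneous; if $W_e$ is finite of size $m$, then $v_0$ ends with $m+1$ children while $v_m$ is a leaf, and the initial child on $v_0$ also disposes of the case $W_e=\emptyset$, so in every finite case the height-$1$ nodes fail to be uniform and $\A_{f(e)}$ is not ultrahomogeneous. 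The construction only ever adds nodes, so $f(e)\in TRP$ for all $e$.

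For (b) I would reduce the $\Sigma^0_3$-complete set $COF$ to $WUP$, landing in $TRP$, again with a height-$2$ tree $\A_{g(e)}$ and invoking Proposition \ref{2wuht}: such a tree is weakly ultrahomogeneous iff all but finitely many height-$1$ nodes have the same number of successors. The root has height-$1$ children $u_0,u_1,\dots$; I give $u_n$ exactly $n+1$ leaf children at its creation, and as soon as $n$ enters $W_e$ I begin attaching one new leaf child to $u_n$ at every subsequent stage, driving its degree to $\omega$. Thus $u_n$ has $\omega$ successors if $n\in W_e$ and exactly $n+1$ successors otherwise. If $W_e$ is cofinite, all but finitely many $u_n$ have $\omega$ successors, so $\A_{g(e)}$ is weakly ultrahomogeneous; if $W_e$ is co-infinite, the indices $n\notin W_e$ yield infinitely many height-$1$ nodes with pairwise distinct finite degrees $n+1$, so no degree occurs cofinitely often and $\A_{g(e)}$ is not weakly ultrahomogeneous. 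Only additions are made, so $g(e)\in TRP$.

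The main obstacle is the monotonicity constraint in (b): since the predecessor function can only be extended, an attached child can never be removed, so the limiting degree of each $u_n$ must be a non-decreasing function of the stage. This is precisely why the ``default'' degree, for $n\notin W_e$, is made to \emph{grow with $n$} rather than being held at a fixed constant. Were one instead to use two fixed degrees for the two cases, the subcase $W_e$ finite would let the default degree dominate and the tree would spuriously appear weakly ultrahomogeneous; forcing the default degrees to be pairwise distinct guarantees genuine non-uniformity for \emph{every} co-infinite $W_e$, the finite ones included, which is exactly the equivalence with $COF$ that the reduction requires.
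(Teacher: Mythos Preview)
Your proof is correct and follows the same overall strategy as the paper: the upper bounds are derived from Theorem~\ref{prop10} and Theorem~\ref{pwuht} exactly as you do, and both hardness arguments build height-$2$ trees reducing $INF$ and $COF$ respectively, with the key device in (b) being that the ``default'' degree of the $n$-th height-$1$ node grows with $n$. The specific constructions differ in inessential detail: for (a) the paper gives the root only two immediate successors (one getting a new leaf every stage, the other only when an element enters $W_e$), and for (b) the paper makes the $n$-th height-$1$ node acquire infinite degree precisely when an entire tail $n+1,n+2,\ldots$ lies in $W_e$, whereas your direct test ``$n\in W_e$'' is cleaner and your explicit discussion of the monotonicity obstacle makes the essential point more transparently than the paper does.
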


\begin{proof} (a)  $\B_e$ is ultrahomogeneous if and only any two elements of the same height have an equal number of successors. It is computable to test whether $a$ and $b$ have the same height, by computing $f^n(a)$ and $f^n(b)$ for $n = 1,2,\dots$ and checking that the least $n$ such that $f^n(a) = \epsilon$  is also the least $n$ such that $f^n(b) = \epsilon$.  Now $a$ has $\geq k$ successors if and only if there exist $k$ distinct elements $x_1,\dots,x_k$ such that $\phi_e(x_i) = a$ for each $i$, which makes this a $\Sigma^0_1$ relation of $a,k$.
Next we see that $a$ and $b$ have an equal number of successors if and only if, for each $k$, $a$ has $\geq k$ successors if and only if $b$ has $\geq k$ successors, making this a $\Pi^0_2$ relation. Quantifiying over $a$ and $b$ of the same height, we obtain a $\Pi^0_2$ characterization of ultrahomogeneity.

For the completeness, we define a computable functrion $g$ such that $\B_{g(e)}= \B$ is ultrahomogeneous if and only if $W_e$ is infinite, and is always a tree.  Let $0$ be the root of $\B_{g(e)}$ and have immediate successors $1$ and $2$.  Then at each stage $s+1$, give $1$ an additional successor, and give $2$ an additional successor if and only if a new element enters $W_e$.  Then $1$ and $2$ have the same height, and $1$ has infinitely many immediate successors, but $2$ will have infinitely many immediate successors if and only if $W_e$ is infinite.

(b)  $\B_e$ is weakly ultrahomogeneous if and only if there is a finite subtree $T$ of $\B$ such that $B_T[x]$ is ultrahomogeneous for each $x \in T$; this gives a $\Sigma^0_3$ form. 
For the completeness, we define a reduction $h$ of the $\Sigma^0_3$ complete set  $COF  = \{e: W_e\ \text{is cofinite}\}$ to $WUT$ so that $h(e) \in TRP$ for all $e$. 
Again let $0$ be the root, but now let $0$ have infinitely many immediate successors, $2^n$, for each $n$.   For each $n$, 
the construction will give the node $2^n$ $n-1$ immediate successors, and in addition a total of $n+k$ immediate successors if and only if $n+1,n+2,\dots,n+k$ all belong to $W_e$.  
Thus if $W_e$ is cofinite and contains $\{n+1,n+2,\dots\}$, then the nodes $2^n,2^{n+1},\dots$ will all have infinitely many successors. The tree $T$ can consist of the root together with the finitely many nodes $2^n$ which have only finitely many successors. Note that in $B_T[0]$ every immediate successor of $0$ has infinitely many extensions, so that $B_T[0]$ is ultrahomogeneous. 

If $W_e$ is co-infinite, then the nodes of the form $2^n$ all have finitely many immediate successors, but the number of successors will grow with $n$, so that for any finite subtree $T$,
$B_T[0]$ will not be ultrahomogeneous. 
\end{proof}

\section{$n$-Equivalence Structures}

In this section, we study a generalization of equivalence structures allowing for more than one equivalence relation on the universe.

\begin{defn} For $n<\omega$, an $n$-equivalence structure is a structure $\A=(A,E_1,\ldots,E_n)$ where each $E_i$ is an equivalence relation on $A$.  An $n$-equivalence structure is nested if for $i<j\leq n$ we have $ x E_j y\rightarrow x E_i y$, i.e $E_j \subseteq E_i$ as subsets of $A\times A$.  For $a \in A$,  we let $[a]_i$ denote the equivalence class of $a$ under $E_i$. Thus for a nested equivalence structure, $i < j \leq n$ implies that $[a]_j \subseteq [a]_i$,  so that the $E_i$ classes are partitioned by $E_j$. There is an implicit equivalence relation $E_0 = A \times A$, so that $[a]_ 0 = A$ for all $a$. \end{defn}

For example, consider the relation on a given familly of structures defined by $\A \equiv_n \B$ if and only if $\A$ and $\B$ satsify the same 
sentences of quantifier rank $n$. This plays an important role in mathematical logic. 

It is easy to see that if an $n$-equivalence structure $\A$ is ultrahomogeneous, then each individual equivalence structure $(A,E_i)$ must be ultrahomogeneous.
In general, this condition is not sufficient. It must at least be the case that for any $a$ and $b$, the intersections $[a]_i \cap [a]_j$ and $[b]_i \cap [b]_j$ have the same cardinality. For example, let $A = \{1,2,3,4,5,6\}$ have $E_1$ classes $\{1,2,3\}$ and $\{4,5,6\}$ and $E_2$ classes $\{1,2\}$, $\{3,4\}$ and $\{5,6\}$.
Then $(A,E_1)$ and $(A,E_2)$ are ultrahomogeneous, but $(A,E_1,E_2)$  is not ultrahomogeneous since $[1]_1 \cap [1]_2 = \{1,2\}$ but 
$[4]_1 \cap [4]_2 = \{4\}$.  The other direction for nested structures is considered below.  

In \cite{Marshall}, Leah Marshall describes an effective correspondence between nested $n$-equivalence structures and certain trees of finite height where the branching of the tree reflects the containment of equivalence classes. This correspondence allows many effective properties to be transferred between nested $n$-equivalence structures and trees of finite height.
 
\begin{defn} \label{defTA} For any $n$-equivalence structure $\A = (A,E_1,\dots,E_n)$, let $E_0 = A \times A$, let $E_{n+1}$ be equality, and define the tree $T_{\A}$  as follows. The universe of $T_{\A}$ is the set $\{[a]_i: a \in A, i = 1,\dots,n\}$ and the partial ordering is inclusion. This means that for each $a$ and $i \leq n$, $[a]_i$ is the predecessor of $[a]_{i+1}$. 
\end{defn}

Marshall  shows that a representation of $T_{\A}$ can be computed from $\A$ so that the mapping from $a$ to $[a]$ is also computable from $\A$. 
 Recalling the definition of trees of finite type from section 7, here is a key result of \cite{Marshall}.

\begin{theorem}  [Marshall \cite{Marshall}] Let $\A$ be a computable $n$-equivalence structure and $T_{\A}$ its corresponding tree of finite height. Then the following are equivalent:
\begin{itemize}
\item $\A$ is computably categorical.
\item $\A$ is relatively computably categorical.
\item $(T_{\A},\prec)$ is computably categorical.
\item $(T_{\A},\prec)$ is relatively computably categorical.
\item $(T_{\A},\prec)$ is of finite type.
\end{itemize}
\end{theorem}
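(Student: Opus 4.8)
The plan is to reduce the whole five-way equivalence to two ingredients already available: the characterization of computably categorical finite-height trees by Lempp, McCoy, Miller and Solomon \cite{LMMS}, and Marshall's effective correspondence $\A \mapsto T_{\A}$. The three statements about $(T_{\A},\prec)$ come for free from \cite{LMMS}: since $T_{\A}$ is a tree of finite height, ``$(T_{\A},\prec)$ is of finite type,'' ``$(T_{\A},\prec)$ is computably categorical,'' and ``$(T_{\A},\prec)$ is relatively computably categorical'' all coincide (relative computable categoricity always implies computable categoricity, and the finite-type condition is exactly what their back-and-forth uses, relativization included). So the entire burden is to transfer (relative) computable categoricity across the correspondence, in both directions.

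The key is that the correspondence is isomorphism-invariant and computable both ways. By Marshall's lemma, $T_{\A}$ and the map $a \mapsto [a]$ are computable from $\A$; conversely, from any tree $S \cong T_{\A}$ one recovers an $n$-equivalence structure whose universe is the set of leaves (the $E_{n+1}$-classes) of $S$, declaring $x\,E_i\,y$ to hold iff $x$ and $y$ share their height-$i$ ancestor. For the forward transfer, suppose $(T_{\A},\prec)$ is computably categorical and let $\B \cong \A$ be computable. First I would form the computable tree $T_{\B} \cong T_{\A}$, extract a computable $\prec$-isomorphism $g \colon T_{\B} \to T_{\A}$, and note that $g$ carries leaves to leaves; composing the computable maps sending an element of $\B$ to its leaf in $T_{\B}$, then through $g$, then back to the corresponding element of $\A$, yields a computable isomorphism $\B \to \A$. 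For the backward transfer, suppose $\A$ is computably categorical and let $S \cong T_{\A}$ be a computable tree. I would recover as above a computable copy $\B$ of $\A$ together with a computable isomorphism $S \cong T_{\B}$, apply computable categoricity of $\A$ to obtain a computable $\B \to \A$, push it forward to a computable $T_{\B} \to T_{\A}$, and compose with $S \cong T_{\B}$. Running every step relative to the relevant diagrams gives the identical argument for relative computable categoricity.

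Assembling these, the chain $\A$ c.c. $\iff T_{\A}$ c.c. $\iff T_{\A}$ finite type $\iff T_{\A}$ rel.\ c.c. $\iff \A$ rel.\ c.c.\ delivers all five equivalences. I expect the main obstacle to be making the correspondence genuinely reversible and computable at the level of maps, where two points need care. First, recovering $\B$ computably from an arbitrary computable $S \cong T_{\A}$ requires that the leaves of $S$ form a computable set; this holds because $T_{\A}$ has finite height $n+1$, so the height of a node (its number of predecessors) is computable and a node is a leaf exactly when its height is $n+1$, avoiding the merely $\Pi^0_1$ ``has no successor'' test. Second, the elements of $\A$ must be faithfully encoded, so the tree must carry the level-$(n+1)$ singleton classes rather than stopping at the $E_n$-classes; otherwise the multiplicities inside an $E_n$-class are collapsed and the map fails to be isomorphism-invariant. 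Finally one must check that a $\prec$-isomorphism, which a priori respects only the order, induces an $n$-equivalence isomorphism on leaves; this follows because $x\,E_i\,y$ is equivalent to the purely order-theoretic condition that $x$ and $y$ have the same height-$i$ ancestor, which any tree isomorphism preserves.
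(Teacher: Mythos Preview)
The paper does not give a proof of this theorem: it is stated with attribution to Marshall's dissertation \cite{Marshall} and is quoted without argument, so there is no ``paper's own proof'' to compare against. Your proposal is in effect supplying the argument that the paper omits.

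As a proof outline it is sound. Reducing the three tree statements to \cite{LMMS} is correct (their back-and-forth argument relativizes, so finite type, computable categoricity, and relative computable categoricity coincide for finite-height trees), and the remaining work is exactly the two-way transfer you describe. You have also identified the genuine technical hazards: that leaves must be computably recognizable (handled by the fixed height $n+1$), that the tree must carry the singleton level so that elements of $\A$ are recoverable, and that a $\prec$-isomorphism preserves height-$i$ ancestors and hence the relations $E_i$ on leaves. One point deserves a sentence more: in the backward transfer, after building $\B$ from the leaves of an arbitrary computable $S\cong T_{\A}$, you invoke a computable isomorphism $S\cong T_{\B}$, but $T_{\B}$ is a fresh computable object with its own presentation (nodes are equivalence classes, coded however Marshall's construction codes them), so you should say explicitly that the map sending a node $s\in S$ of height $i$ to the class $[x]_i$ for any leaf $x$ below $s$ is computable and is the required isomorphism. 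Finally, note that the correspondence $\A\mapsto T_{\A}$ only yields a tree when $\A$ is \emph{nested}; the paper's statement leaves this implicit, but your argument (and Marshall's) uses it.
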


We can characterize the ultrahomogeneous nested equivalence structures for  $n< \omega$ using this correspondence with trees. 

\begin{theorem} \label{uhn}
Let $\A=(A,E_1,\ldots,E_n)$ be a nested $n$-equivalence structure and let $E_0 = A \times A$ and $E_{n+1}$ be equality.  Then the following are equivalent. 
\begin{enumerate}
\item  $\A$ is ultrahomogeneous.
\item For  each $ i \leq n$ there exists $k_i$ such that every $E_i$ class is partitioned into $k_i$ many $E_{i+1}$ classes.
\item $T_{\A}$ is ultrahomogeneous in the predecessor representation. 
\end{enumerate}
\end{theorem}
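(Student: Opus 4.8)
The plan is to prove the cycle (1) $\Rightarrow$ (2) $\Rightarrow$ (3) $\Rightarrow$ (1), using the tree $T_{\A}$ as a bridge and leaning on Theorem \ref{prop10} and the Remark following it. The essential dictionary is this: the nodes of $T_{\A}$ at height $i$ are exactly the $E_i$-classes of $\A$ (with the root $A = [a]_0$ at height $0$ and the singletons $[a]_{n+1} = \{a\}$ as the leaves at height $n+1$), and the immediate successors of a node $[a]_i$ are precisely the $E_{i+1}$-classes into which $[a]_i$ is partitioned. With this identification, condition (2) says exactly that all nodes of a given height $i$ have the same number $k_i$ of successors, which is verbatim the criterion of Theorem \ref{prop10}. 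Hence (2) $\Leftrightarrow$ (3) is immediate, and I only need to connect (1) to these.

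For (1) $\Rightarrow$ (2), I would apply ultrahomogeneity to one-point substructures. Since $\A$ is purely relational, for any $a,b \in A$ the map $a \mapsto b$ is an isomorphism of the substructures $\{a\}$ and $\{b\}$, so it extends to an automorphism $\sigma$ of $\A$ with $\sigma(a) = b$. Because $\sigma$ preserves each $E_i$, it carries $[a]_i$ bijectively onto $[b]_i$ and carries the partition of $[a]_i$ into $E_{i+1}$-classes onto the corresponding partition of $[b]_i$; in particular these two partitions have the same number of blocks. Letting $a,b$ range over all of $A$ shows that, for each $i \leq n$, every $E_i$-class splits into the same number $k_i$ of $E_{i+1}$-classes (with countability forcing any infinite value to be $\aleph_0$), which is (2).

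For (3) $\Rightarrow$ (1), I would transfer an isomorphism of substructures of $\A$ up to $T_{\A}$, extend it there, and bring it back down. Given finite substructures $X, Y$ of $\A$ and an isomorphism $\varphi : X \to Y$, define $\psi([x]_i) = [\varphi(x)]_i$ on the finite subtrees of $T_{\A}$ generated by $X$ and $Y$ (those subtrees being $\{[x]_i : x \in X,\ 0 \le i \le n+1\}$ and its image). One checks $\psi$ is well defined and an isomorphism of subtrees precisely because $[x]_i = [x']_i \Leftrightarrow x E_i x' \Leftrightarrow \varphi(x) E_i \varphi(x') \Leftrightarrow [\varphi(x)]_i = [\varphi(x')]_i$, and because $\psi$ respects the predecessor relation. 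Since $T_{\A}$ is ultrahomogeneous by (3), the Remark following Theorem \ref{prop10} extends $\psi$ to an automorphism $\Psi$ of $T_{\A}$. Finally, $\Psi$ permutes the leaves of $T_{\A}$, inducing a permutation $\sigma$ of $A$ via $\{\sigma(a)\} = \Psi(\{a\})$; as $\Psi$ preserves heights and the ancestor relation, $\sigma$ preserves each $E_i$ and hence is an automorphism of $\A$ extending $\varphi$.

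I expect the main obstacle to be the bookkeeping in (3) $\Rightarrow$ (1): verifying that the leaf-permutation induced by a tree automorphism is a genuine automorphism of $\A$ and that $\psi$ is a well-defined subtree isomorphism, together with handling the exact form of $T_{\A}$ (including the degenerate levels where some $k_i = 1$ and an $E_i$-class is not properly refined). Everything substantive about extending partial maps, however, is already packaged in Theorem \ref{prop10} and its Remark, so the argument reduces to translating faithfully across the $\A \leftrightarrow T_{\A}$ correspondence.
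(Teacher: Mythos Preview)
Your proposal is correct and follows essentially the same approach as the paper: the paper also proves $(1)\Rightarrow(2)$ via one-point substructures (phrased contrapositively rather than directly), deduces $(2)\Rightarrow(3)$ from Theorem~\ref{prop10}, and for $(3)\Rightarrow(1)$ lifts a finite isomorphism $\theta$ to the subtree map $\widehat\theta([b]_i)=[\theta(b)]_i$, extends it to a tree automorphism, and reads off the automorphism of $\A$ from the induced permutation of the leaves. One minor remark: for the extension step you only need the ultrahomogeneity of $T_{\A}$ itself, not the Remark following Theorem~\ref{prop10} (which concerns two possibly different trees).
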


\begin{proof}  $(1 \implies 2)$:  Suppose that $\A$ is ultrahomogenous but the second condition fails. Then for some $i \leq n$ there are two $E_i$ classes $C_1,C_2$ such that $C_1$ contains more $E_{i+1}$ classes than $C_2$. For $x \in C_1$ and $y\in C_2$ we have $\la x\ra\cong\la y\ra$ but the isomorphism can't extend since such an automorphism would have to send $C_1$ to $C_2$.

$(2 \implies 3)$: Assuming (2), it follows that each node $[a]_i$ of $T_{\A}$ has exactly $k_i$ immediate successors. Thus $T_{\A}$ is ultrahomogeneous by Theorem \ref{prop10}.

$(3 \implies 1)$:  Assume that $T_{\A}$ is ultrahomogeneous and let $\theta$ be an isomorphism mapping a finite subset $B$ of $A$ to a finite subset $C$. This induces an isomorphism $\widehat{\theta}$  mapping the finite subtree $\{[b]_i: i \leq n+1\}$ of $T_{\A}$ to $\{[c]_i: i \leq n+_1\}$ such that $\widehat{\theta}([b]_i) = [\theta(b)]_i$.  To check that this is an isomorphism, note first that, for all $b \in B$, $[b]_{n+1} = \{b\}$ and  $\widehat{\theta}([b]_{n+1}) = [\theta(b)]_{n+1} = \{\theta(b)\}$. Then for each $i$, $[b]_i$ is the predecessor of $[b]_{i+1}$ in $T_{\A}$ and $[\theta(b)]_i$ is the predecessor of $[\theta(b)]_{i+1}$, so that $\widehat{\theta}$ preserves the predecessor function on $T_{\A}$. Since $T_{\A}$ is ultrahomogeneous in the predecessor framework, $\widehat{\theta}$ may be extended to an automorphism $\widehat{\phi}$ of $T_{\A}$. Now define the automorphism $\phi$ on $\A$ so that, for each $a$, $\phi(a) = d$ if and only if $\widehat{\phi}(\{a\}) = \{d\}$, so that $\widehat{\phi}([a]_{n+1}) = [\phi(a)]_{n+1}$.  Since $[a]_n$ is the predecessor of $[a]_{n+1}$, it follows that $\widehat{\phi}([a]_n])$ is the predecessor of $\widehat{\phi}([a]_{n+1}) = [\phi(a)]_{n+1}$, which is $[\phi(a)]_n$. Proceeding by induction, we see that $\widehat{\phi}([a]_i) = [\phi(a)]_i$ for all $i$.  Then  for each $a,a' \in A$ and all $i \leq n+1$, we have 
\[
a E_i a' \iff [a]_i = [a']_i \iff \widehat{\phi}([a]_i) = \widehat{\phi}([a']_i) \iff [\phi(a)]_i = [\phi(a')]_i \iff \phi(a) E_i \phi(a')
\]
which shows that $\phi$ is an automorphism of $\A$. 
\end{proof}

\begin{cor} \label{cor91} If $\A = (A,E_1,\dots,E_n)$ is a nested ultrahomogeneous equivalence structure such that all  equivalence classes are finite, then $\A$ is ultrahomogeneous if and only if each $(A,E_i)$ is ultrahomogeneous. 
\end{cor}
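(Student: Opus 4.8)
The forward implication is already available: the general remark preceding Definition~\ref{defTA} shows that if $\A$ is ultrahomogeneous then each $(A,E_i)$ is ultrahomogeneous, and that argument uses nothing about nesting or finiteness. So the entire content lies in the converse, and the plan is to deduce it from condition~(2) of Theorem~\ref{uhn}. That is, assuming every $(A,E_i)$ is ultrahomogeneous, I would show that for each $i \leq n$ there is a single number $k_i$ such that every $E_i$ class splits into exactly $k_i$ many $E_{i+1}$ classes; Theorem~\ref{uhn} then delivers ultrahomogeneity of $\A$ at once via the implication $(2)\Rightarrow(1)$.

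The key step is a counting argument made possible by the finiteness hypothesis. Recall that a single equivalence structure is ultrahomogeneous exactly when all of its classes have the same size; since every class is finite, ultrahomogeneity of $(A,E_i)$ yields a common finite size $m_i$ for the $E_i$ classes, for each $i = 1,\dots,n$, and we set $m_{n+1} = 1$ for the equality relation $E_{n+1}$. By nesting, each $E_{i+1}$ class is contained in a unique $E_i$ class, so a fixed $E_i$ class $C$ is partitioned into $E_{i+1}$ classes, every one of which has size $m_{i+1}$. Counting the elements of $C$ gives $m_i = k_i \cdot m_{i+1}$ with $k_i = m_i/m_{i+1}$, a value independent of the choice of $C$. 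This is precisely condition~(2) of Theorem~\ref{uhn} for $1 \leq i \leq n$; the remaining case $i = 0$ is vacuous, since $E_0 = A \times A$ has the single class $A$, and whatever number of $E_1$ subclasses it has supplies the required $k_0$.

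The point I would stress as the crux is exactly where finiteness is used. Equal class size forces $k_i = m_i/m_{i+1}$ to be constant only because the classes are finite and can be counted; for infinite classes, two $E_i$ classes of the same (infinite) cardinality may be partitioned into different numbers of $E_{i+1}$ classes, so the argument collapses. This is also the underlying reason the converse fails in general (as the $A=\{1,\dots,6\}$ example before Definition~\ref{defTA} already illustrates), and it is why the corollary is restricted to structures with all classes finite. Beyond this, the only mild care needed is at the top level $i = 0$, where $A$ itself may be infinite and the counting identity is unavailable; but because there is just a single $E_0$ class this level imposes no constraint, so it creates no obstacle.
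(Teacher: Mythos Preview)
Your proof is correct and follows essentially the same approach as the paper. For the converse you argue exactly as the paper does: ultrahomogeneity of each $(A,E_i)$ gives a common finite size $m_i$, and counting yields $k_i = m_i/m_{i+1}$, which is condition~(2) of Theorem~\ref{uhn}. The only minor difference is the forward direction: you invoke the general remark (stated just before Definition~\ref{defTA}) that ultrahomogeneity of $\A$ forces ultrahomogeneity of each $(A,E_i)$, while the paper instead runs the argument through Theorem~\ref{uhn} again, extracting the $k_i$'s and multiplying them out to show every $E_i$ class has size $k_i k_{i+1}\cdots k_n$; both are valid and equally short. One small quibble with your commentary: the $\{1,\dots,6\}$ example you cite illustrates failure of the converse due to \emph{non-nesting}, not due to infinite classes; the relevant counterexample for the finiteness hypothesis is the one given immediately after the corollary.
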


\begin{proof} Suppose that each $(A,E_i)$ is ultrahomogeneous, so that all $E_i$ classes have the same finite size $m_i$. It follows that each $E_i$ class is partitioned into exactly $m_i/m_{i+1} = k_i$ many $E_{i+1}$ classes.  Conversely, if $\A$ is ultrahomogenous, then by Theorem \ref{uhn}, there exist $k_0,\dots,k_n$ such that each $E_i$ class is partitioned into $k_i$ many $E_{i+1}$ classes. For $i = n$, it follows that each $E_n$ class has size $k_n$. Then by induction, we see that for $i = 1,\dots,n$, each $E_i$ class has size $k_i \cdot k_{i+1} \cdots k_n$. Thus $(A,E_i)$ is ultrahomogeneous for $i = 1,\dots,n$. 
\end{proof}

Here is an example which shows that the finiteness condition is necessary in Corollary \ref{cor91}

\begin{example}
 Let $E_1$ be congruence modulo 2 over the natural numbers, which partitions $\omega$ into two  infinite classes, the odd numbers and the even numbers. Now  let $E_2$ partition the even numbers modulo 3 and partition the odd numbers modulo 5.
Then each $E_2$ class is also infinite, so that both $(A,E_1)$ and $(A,E_2)$ are ultrahomogeneous.  But $\A$ is not ultrahomogeneous, since $[0]_1$ is partitioned into 3 subclasses, whereas $[1]_1$ is partitioned into 5 subclasses. 
\end{example}

Unlike in the ultrahomogeneous case, it is not the case, even for a weakly homogeneous nested equivalence structure, that each individual equivalence relation must be weakly ultrahomogeneous.

\begin{example}
Let $\A=(A,E_1,E_2)$ where $E_1$ has two infinite classes $B$ and $C$, while $E_2$ partitions $B$ into two element classes and $C$ into three element classes.  So $(A,E_2)$ is not weakly ultrahomogeneous. Now let $S = \{b,c\}$ be our exceptional set with $b\in B,c\in C$ and suppose $\la b,c,\textbf{x}\ra\cong\la b,c,\textbf{y}\ra$.  Since the isomorphism respects the partition $\{B,C\}$ and both of $(B,E_2),(C,E_2)$ are ultrahomogeneous, the isomorphism extends on $B$ and $C$ separately, hence it extends to an automorphism of $\A$.
\end{example}

The above example indicates a property of 2-equivalence structures ensuring they are weakly ultrahomogeneous. This property leads to the following characterization, again using the connection with trees.

\begin{theorem} \label{wuhn}
Let $\A=(A,E_1,\ldots,E_n)$ be a nested $n$-equivalence structure and let $E_0 = A \times A$ and $E_{n+1}$ be equality.  Then $T_{\A}$ is weakly ultrahomogeneous in the predecessor representation if and only if $\A$ is weakly ultrahomogeneous. 
\end{theorem}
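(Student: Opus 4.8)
The plan is to prove both implications directly from the definition of weak ultrahomogeneity via exceptional sets, using the two-way dictionary between $\A$ and $T_{\A}$ that is already extracted inside the proof of Theorem \ref{uhn}. I will use the following two facts recalled from there. First, any isomorphism $\theta$ between finite subsets $B,C$ of $\A$ induces a tree isomorphism $\widehat{\theta}([b]_i) = [\theta(b)]_i$ between the finite subtrees $U_1 = \{[b]_i : b \in B,\ i \le n+1\}$ and $U_2 = \{[c]_i : c \in C,\ i \le n+1\}$ of $T_{\A}$; in particular an automorphism of $\A$ induces an automorphism of $T_{\A}$. Second, conversely, any automorphism $\widehat{\phi}$ of $T_{\A}$ preserves height, hence permutes the singleton leaves $[a]_{n+1} = \{a\}$, and the map $\phi$ defined by $\{\phi(a)\} = \widehat{\phi}(\{a\})$ is an automorphism of $\A$ with $\widehat{\phi}([a]_i) = [\phi(a)]_i$ for all $i$. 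Throughout I treat $T_{\A}$ as having root $[a]_0 = A$ and leaves $[a]_{n+1} = \{a\}$, so that every leaf sits at height $n+1$.

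For the direction $T_{\A}$ weakly ultrahomogeneous $\Rightarrow \A$ weakly ultrahomogeneous, I would fix a finite exceptional subtree $S$ for $(T_{\A},f)$ and choose one representative $r_v \in v$ from each \emph{leaf} $v$ of $S$, setting $\vec a = \{r_v : v \text{ a leaf of } S\}$. Since $S$ is downward closed, every node of $S$ equals $[r_v]_i$ for some leaf $v$ and some $i$, so $\vec a$ covers $S$. Given any isomorphism $\theta : B \to C$ of finite substructures of $\A$ fixing $\vec a$, I pass to $\widehat{\theta} : U_1 \to U_2$; because $\theta(r_v) = r_v$, I get $\widehat{\theta}([r_v]_i) = [r_v]_i$, so $\widehat{\theta}$ fixes $S$ pointwise and $S \subseteq U_1 \cap U_2$. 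Weak ultrahomogeneity of $T_{\A}$ then extends $\widehat{\theta}$ to an automorphism $\widehat{\phi}$, whose induced automorphism $\phi$ of $\A$ extends $\theta$, since $\{\phi(b)\} = \widehat{\phi}(\{b\}) = \widehat{\theta}([b]_{n+1}) = \{\theta(b)\}$ for $b \in B$.

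For the converse, I would take an exceptional set $\vec a = \{a_1,\dots,a_m\}$ for $\A$ and let $S = \{[a_j]_i : j \le m,\ 0 \le i \le n+1\}$, a finite subtree of $T_{\A}$. To show $S$ is exceptional, I start from a tree isomorphism $\psi : R_1 \to R_2$ of finite subtrees with $S \subseteq R_1,R_2$ and $\psi$ fixing $S$. For each leaf $\ell$ of $R_1$ with $\ell \notin S$ I choose $x_\ell \in \ell$ and $y_\ell \in \psi(\ell)$, and define $\theta$ by $\theta(x_\ell) = y_\ell$ and $\theta(a_j) = a_j$, with $B,C$ the resulting finite sets. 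Applying weak ultrahomogeneity of $\A$ to $\theta$ yields an automorphism $\phi$, and I then verify that $\widehat{\phi}$ extends $\psi$: each $v \in R_1$ is an ancestor $[x_\ell]_i$ of some leaf $\ell$, and $\widehat{\phi}(v) = [\phi(x_\ell)]_i = [y_\ell]_i = \psi(v)$ when $\ell \notin S$, while $v \in S$ forces both sides to equal $v$ when $\ell \in S$.

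The main obstacle is showing, in this last direction, that $\theta$ really is a well-defined isomorphism of $n$-equivalence substructures fixing $\vec a$; the delicate point is the relations $E_i$ at levels $i$ \emph{finer} than the heights of the leaves of $R_1$, about which $\psi$ carries no information. I expect to resolve this structurally: distinct leaves of the finite subtree $R_1$ are pairwise incomparable in $T_{\A}$, so their classes are disjoint and split at a common height that $\psi$ preserves, ruling out spurious equivalences at finer levels; and since $S$ contains the whole chain $[a_j]_0,\dots,[a_j]_{n+1}$ of each $a_j$, a leaf $\ell \notin S$ cannot equal any $[a_j]_i$, so no $a_j$ lies in the bottom class $\ell$ of a chosen $x_\ell$. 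Together these give $u\,E_i\,v \Leftrightarrow \theta(u)\,E_i\,\theta(v)$ at \emph{all} levels $i \le n+1$, making $\theta$ an isomorphism and closing the argument. Notably this route uses only the definition of exceptional set together with the dictionary above, avoiding any appeal to Theorem \ref{pwuh}.
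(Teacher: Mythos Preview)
Your first direction matches the paper's. Your second direction is correct but takes a genuinely different route: the paper invokes Theorem~\ref{pwuh} and argues by contradiction (if some $T_S[x]$ were not ultrahomogeneous, one finds $a,b$ with $S\cup\{a\}\cong S\cup\{b\}$ in $\A$ that cannot extend, using the branching criterion of Theorem~\ref{uhn}), whereas you verify directly from the definition that $S$ is exceptional for $(T_{\A},f)$ by pulling a given tree isomorphism $\psi$ back to an isomorphism $\theta$ in $\A$, extending there, and lifting via the hat operation. Your route is more self-contained, needing neither Theorem~\ref{pwuh} nor the characterization in Theorem~\ref{uhn}, at the cost of the bookkeeping you correctly flag: distinct leaves of $R_1$ are incomparable in $T_{\A}$ because $R_1$ is predecessor-closed, hence represent disjoint classes, so $\neg\, x_\ell E_i x_{\ell'}$ for all $i$ at or below the leaf heights; and since $S$ contains every $[a_j]_i$, no $a_j$ can lie in a leaf $\ell\notin S$. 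One small point worth making explicit in your write-up: when $v=[x_\ell]_i$ happens to lie in $S$ (which can occur even for $\ell\notin S$), you still get $\widehat{\phi}(v)=[y_\ell]_i=v=\psi(v)$, because $\phi$ preserves the relation $x_\ell\, E_i\, a_j$ witnessing $v\in S$.
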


\begin{proof}  Suppose that $T_{\A}$ is weakly ultrahomogeneous with exceptional subtree $S$ and let  $C \subset A$ contain a representative $c$ for each leaf $x$ of $S$. 
Now let  $\theta$ be an isomorphism mapping a finite subset $B$ of $A$ to a finite subset $D$ with $\theta(c) = c$ for each $c \in C$. This induces an isomorphism $\widehat{\theta}$  mapping the finite subtree $\{[b]_i: b \in B, i \leq n+1\}$ of $T_{\A}$ to $\{[d]_i: d \in D,i \leq n+_1\}$ such that $\widehat{\theta}(x) = x$ for all $x \in S$ and such that $\widehat{\theta}([b]_i) = [\theta(b)]_i$. $\widehat{\theta}$ preserves  predecessors as seen in the proof of Theorem \ref{uhn} above. Since $T_{\A}$ is weakly ultrahomogeneous in the predecessor framework, with exceptional set $S$, it follows that $\widehat{\theta}$ may be extended to an automorphism $\widehat{\phi}$ of $T_{\A}$.  Then as in the proof of Theorem \ref{uhn}, we may extend $\theta$ to an automorphism $\phi$ on $\A$ so that, for each $a$, $\phi(a) = b$ if and only if $\widehat{\phi}(\{a\}) = \{b\}$, so that $\widehat{\phi}([a]_{n+1}) = [\phi(a)]_{n+1}$.

Suppose next that $\A$ is weakly homogeneous with exceptional set $C$ and let $S = \{[c]_i: c \in S, i \leq n\}$. Let $x = [c]_i \in S$. By Theorem \ref{pwuh}, it suffices to show that $T_S[x]$ is ultrahomogeneous. Suppose by way of contradiction that $T_S[x]$ is not ultrahomogeneous. Then there are extensions $[a]_j$ and $[b]_j$ of $x$ with $i < j$ and $h < k$ such that $[a]_j$ is partitioned into $h$ many $E_{j+1}$ classes and $[b]_j$ is partitioned into $k$ many classes. We claim that $S \cup \{a\} \cong S \cup \{b\}$. That is,  by the definition of $T_S[x]$, we have for any $d \in S$,   $a E_i d \iff c E_i d \iff b E_i d$ and we have 
$\neg  a E_{i+1} d$ and $\neg b E_{i+1} d$.  But this partial isomorphism cannot be extended to an automorphism since $h \neq k$. 
\end{proof}

For nested 2-equivalence structures, Theorem \ref{wuhn} and Proposition \ref{3wuht} lead to the following characterization. 

\begin{cor}\label{n2eq} Let $\A=(A,E_1,E_2)$ be a nested 2-equivalence structure. Then $\A$ is weakly ultrahomogeneous if and only if $\A$ satisfies the following conditions:  $E_1$ is weakly ultrahomogeneous, $E_2$ restricted to each $E_1$-class is weakly ultrahomogeneous, and there are $h,k\leq\omega$ such that all but finitely many of the restrictions are ultrahomogeneous with $h$ many 2-classes of size $k$.
\end{cor}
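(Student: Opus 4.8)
The plan is to push the whole question onto the tree side via Theorem \ref{wuhn}, which tells us that $\A$ is weakly ultrahomogeneous if and only if $T_{\A}$ is weakly ultrahomogeneous in the predecessor representation. For a nested $2$-equivalence structure, $T_{\A}$ is a tree of height $3$ whose levels are exactly the equivalence classes under $E_0 = A\times A$ (the root, height $0$), $E_1$ (height $1$), $E_2$ (height $2$), and $E_3$ = equality (the singleton leaves, height $3$). First I would fix the dictionary this induces: the height-$1$ nodes are the $E_1$-classes; the immediate successors of an $E_1$-class $C$ are the $E_2$-classes contained in $C$; and the immediate successors of an $E_2$-class $D$ are the singletons $\{a\}\subseteq D$. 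Consequently the number of successors of a height-$1$ node $C$ is the number of $E_2$-classes inside $C$, and the number of successors of a height-$2$ node $D$ is $|D|$, the size of that $E_2$-class. With this dictionary in hand the corollary becomes a transcription of Proposition \ref{3wuht}.

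Next I would apply Proposition \ref{3wuht} to $T_{\A}$ and read off its two conditions through the dictionary. Condition (a) — for each height-$1$ node, all but finitely many of its successors have an equal number of successors — says that for each $E_1$-class $C$, all but finitely many $E_2$-classes inside $C$ have the same size; by the characterization of weakly ultrahomogeneous equivalence structures this is exactly the assertion that $(C, E_2\restr C)$ is weakly ultrahomogeneous for every $E_1$-class $C$, which is the second condition of the corollary. Condition (b) — there are fixed $h,k$ such that all but finitely many height-$1$ nodes have exactly $h$ successors, each with exactly $k$ successors — says that there are $h,k\le\omega$ for which all but finitely many $E_1$-classes are partitioned into exactly $h$ many $E_2$-classes each of size $k$, i.e.\ all but finitely many restrictions $(C, E_2\restr C)$ are ultrahomogeneous with $h$ classes of size $k$. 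This is the third condition.

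It then remains only to account for the first condition, that $E_1$ is weakly ultrahomogeneous, and here I would observe that it is forced by the third: if all but finitely many $E_1$-classes split into $h$ classes of size $k$, then all but finitely many $E_1$-classes share the common size $h\cdot k$ (with the conventions $\omega\cdot k = \omega$, $h\cdot\omega=\omega$), so $E_1$ has all but finitely many classes of one size and is weakly ultrahomogeneous. This gives both directions at once: the three listed conditions yield (a) and (b), hence (by Proposition \ref{3wuht}) weak ultrahomogeneity of $T_{\A}$ and so of $\A$; conversely weak ultrahomogeneity of $\A$ yields that of $T_{\A}$, hence (a) and (b), hence conditions two and three, from which condition one follows as above.

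The main obstacle I anticipate is bookkeeping rather than any conceptual difficulty: one must keep the two ``all but finitely many'' quantifiers properly aligned — the inner one ranging over the $E_2$-classes inside a fixed $E_1$-class and feeding condition (a), the outer one ranging over the $E_1$-classes themselves and feeding condition (b) — and must get the height convention right, in particular that the singleton classes under $E_3$ always supply the leaves at height $3$, so that $T_{\A}$ genuinely has height $3$ and Proposition \ref{3wuht} applies even when some $E_2$-classes are already singletons. Once the correspondence between successor counts and class sizes is pinned down, both implications are immediate.
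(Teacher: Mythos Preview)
Your proposal is correct and follows exactly the route the paper indicates: it states the corollary as an immediate consequence of Theorem~\ref{wuhn} and Proposition~\ref{3wuht}, and your dictionary between levels of $T_{\A}$ and the $E_i$-classes is precisely how one reads conditions (a) and (b) of Proposition~\ref{3wuht} back as the second and third conditions of the corollary. Your additional observation that the first condition (weak ultrahomogeneity of $E_1$) is forced by the third via the size computation $h\cdot k$ is a nice clarification the paper leaves implicit.
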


Since $n$-equivalence structures are relational, every weakly ultrahomogeneous $n$-equivalence structure is computably categorical.  But unlike for equivalence structures and linear orders, this containment is strict even for nested 2-equivalence structures.

\begin{example} Let $\A=(A,E_1,E_2)$ be a computable nested 2-equivalence structure where $E_2$ has infinitely many classes of size 2 and $E_1$ is such that infinitely many $E_2$-classes are split into $E_1$-classes of size 1 and infinitely many $E_2$-classes contain a single $E_1$-class of size 2. Given $x\in A$, find $y E_2x$. Asking if $y E_1 x$ will then let us computably determine which type of $E_2$-class $x$ belongs to. Then via a back and forth construction we see that $\A$ is computably categorical. But $\A$ is not weakly ultrahomogeneous since two types of restrictions appear infinitely often. 
\end{example}

\section{Conclusion}

In this paper, we have introduced the notion of a weakly ultrahomogeneous strucuture and  
examined several types of  countable ultrahomogeneous and weakly
ultrahomogeneous structures. We observed that countable
ultrahomogeneous linear orderings and also countable ultrahomogeneous 
equivalence structures all have computable models. We showed that for
computable linear orders and computable equivalence structures, the
weakly homogeneous structures are exactly the
computably categorical structures. For computable injection 
structures, we observed that there are continuum many ultrahomogeneous structures
with no computable copy, and there are ultrahomogeneous structures
which are not computably categorical, although every computably
categorical structure is weakly ultrahomogeneous. 
We proved that any computable weakly ultrahomogeneous structure is
$\Delta^0_2$ categorical .We also showed that there are continuum many ultrahomogeneous
trees $(T,f)$ under predecessor and that all are relatively computably categorical. 
We made a connection between trees under predecessor and nested equivalence structures. 

For these structures, we used index sets to determine the complexity of the property of being ultrahomogeneous, 
and the complexity of the property of being weakly ultrahomogeneous.

We introduced the notion of a \emph{minimal exceptional set} for
weakly ultrahomogeneous structures. We gave characterizations of these
sets for linear orders, equivalence structures, and injection
structures. We also looked at the set of elements definable from the
minimal exceptional set. 

Future research topics include the study of other structures such as
vector spaces, Boolean algebras, Abelian $p$-groups, and
partial orderings. One goal is to classify the weakly ultrahomogeneous
structures. A second goal is to determine the effective categoricity
of computable weakly ultrahomogeneous structures.



\begin{thebibliography}{99}

\bibitem{AC14}  F. Adams and D. Cenzer, Computability and categoricity of ultrahomogeneous structures,
in \emph{Language, Life, Limits: Proc. CiE 2014 (Computability in Europe)}, (eds. A. Beckmann, E. Csuhaj-Varju, K. Meer),
Springer Lecture Notes in Computer Science vol. 8493 (2014), pp. 1--10.


\bibitem{CCHM06} W. Calvert, D. Cenzer, V. Harizanov, and A. Morozov,
Effective categoricity of equivalence structures, \emph{Annals of Pure and
Applied Logic} \textbf{14} 1(2006), pp. 61--78.


\bibitem{CCHM09} W. Calvert, D. Cenzer, V. Harizanov, and A. Morozov,
Effective categoricity of equivalence structures, \emph{Annals of Pure and
Applied Logic} \textbf{159} (2009), pp. 87--97.

\bibitem{CKM} W. Calvert, J. F. Knight and J. Millar, Computable trees of Scott rank $\omega_1^{CK}$ and computable approximation,
\emph{J. Symbolic Logic}\textbf{ 76} (2006), pp. 283--298.

\bibitem{CR92} D. Cenzer and J. B. Remmel, Polynomial-Time Abelian Groups,
Ann. Pure and Applied Logic \textbf{56} (1992), pp. 313--363.

\bibitem{CHR14} D. Cenzer, V. Harizanov and J. B. Remmel, Computability
  theoretic properties of injection structures, \emph{Algebra and
    Logic} \textbf{53} (2014), pp. 39--69. 

\bibitem{CF91} G. Cherlin and U. Felgner, Homeogeneous solvable groups, 
\emph{J. London Math. Soc.} \textbf{89} (1991), pp. 102--120. 

\bibitem{CHM11} B. Csima, V. Harizanov, R. Miller and A. Montalban,
Computability of Fraisse limits, \emph{J. Symbolic Logic} \textbf{76} (2011), pp. 66--93.

\bibitem{FKMQS} E. Fokina, J.F. Knight, A. Melnikov, S.M. Quinn and C. Safranski,
Classes of Ulm type and coding rank-homogeneous trees in other structures, 
\emph{J. Symbolic Logic} \textbf{76} (2011), pp.  846--869.

\bibitem{GD} S.S. Goncharov and V.D. Dzgoev, Autostability of models,\ \emph{Algebra and Logic} \textbf{19} (1980), pp. 45--58 (Russian), 
pp. 28--37 (English translation).

\bibitem{LW80} A. H. Lachlan and Robert E. Woodrow,  Countable ultrahomogeneous undirected graphs. Trans. 
Amer. Math. Soc. \textbf{262} (1980), pp. 51--94.

\bibitem{L87} A. H.  Lachlan,  Homogeneous structures, in:
\textit{Proceedings of the International Congress of Mathematicians (Berkeley, Calif., 1986), Vol. 1},  Amer. Math. Soc., Providence, RI, 1987, pp. 314--321.

\bibitem{R81} J. B.  Remmel, Recursively Categorical Linear Orders, 
\emph{Proceedings of the AMS} \textbf{83} (1981) pp.387--391

\bibitem{Fr86} R. Fraisse, Theory of Relations, North-Holland,
  Amsterdam, 1986.

\bibitem{LMMS} S. Lempp, C. McCoy, R. Miller and R. Solomon, Computable categoricity of trees of finite heights, 
\emph{J. Symbolic Logic} \textbf{70} (2005), pp. 151--215.


\bibitem{Marshall} L. Marshall, Computability-Theoretic Properties of Partial Injections, Trees, and Nested Equivalences, Ph.D. Dissertation,
George Washington University, 2015. 

\bibitem{Mil05}  R. Miller,  The computable dimensions of trees of infinite height, 
\emph{J. Symbolic Logic} \textbf{70} (2005), pp. 111--141.

\bibitem{M02} D. Marker,  Model Theory: An Introduction, Springer, New York, 2002.

\bibitem{Schmerl79} J. Schmerl, Countable homogeneous partially ordered sets, \emph{Algebra Universalis} 
\textbf{9} (1979), pp. 317--321.

\bibitem{Soare87} R. Soare, Recursive Enumerable Sets and Degrees,  Springer-Verlag, 1987.


\end{thebibliography}
\end{document}